 \renewcommand{\geq}{\geqslant}
 \renewcommand{\leq}{\leqslant}
 \renewcommand{\ge}{\geqslant}
 \renewcommand{\le}{\leqslant}
\numberwithin{equation}{section}
\numberwithin{algorithm}{section}
\theoremstyle{plain}
\newtheorem{theorem}{Theorem}[section]
\newtheorem{proposition}[theorem]{Proposition}
\newtheorem{lemma}[theorem]{Lemma}
\newtheorem{corollary}[theorem]{Corollary}
\newtheorem{definition}[theorem]{Definition}
\theoremstyle{remark}
\newtheorem{remark}[theorem]{Remark}
\newcommand{\Deltap}{\Delta_{+}}
\newcommand{\Deltapp}{\Delta_{++}}
\newcommand{\Rpp}{\R_{++}}
\newcommand{\Qpp}{\Q_{++}}
\newcommand{\Rp}{\R_{+}}
\newcommand{\Zp}{\Z_{+}}
\newcommand{\bard}{\bar{d}}
\newcommand{\fmin}{f_{\min}}
\def\<#1>{\langle #1\rangle}
\DeclareMathOperator{\E}{E}
\DeclareMathOperator{\diag}{diag}
\DeclareMathOperator{\sgn}{sgn}
\DeclareMathOperator{\supp}{supp}
\newcommand{\fO}{\mathfrak{O}}
\newcommand{\N}{\mathbb{N}}
\newcommand{\C}{\mathbb{C}}
\newcommand{\Q}{\mathbb{Q}}
\newcommand{\R}{\mathbb{R}}
\newcommand{\Z}{\mathbb{Z}}
\newcommand{\s}{\mathbf{s}}
\newcommand{\g}{\mathbf{g}}
\newcommand{\uu}{\mathbf{u}}
\newcommand{\x}{\mathbf{x}}
\newcommand{\y}{\mathbf{y}}
\newcommand{\z}{\mathbf{z}}
\newcommand{\w}{\mathbf{w}}
\newcommand{\cA}{\mathcal{A}}
\newcommand{\cB}{\mathcal{B}}
\newcommand{\cC}{\mathcal{C}}
\newcommand{\cE}{\mathcal{E}}
\newcommand{\cF}{\mathcal{F}}
\newcommand{\cG}{\mathcal{G}}
\newcommand{\cH}{\mathcal{H}}
\newcommand{\cI}{\mathcal{I}}
\newcommand{\cM}{\mathcal{M}}
\newcommand{\cN}{\mathcal{N}}
\newcommand{\cT}{\mathcal{T}}
\newcommand{\cW}{\mathcal{W}}
\newcommand{\ba}{\mathbf{a}}
\newcommand{\bA}{\mathbf{A}}
\newcommand{\bb}{\mathbf{b}}
\newcommand{\bc}{\mathbf{c}}
\newcommand{\be}{\mathbf{e}}
\newcommand{\bF}{\mathbf{F}}
\newcommand{\bG}{\mathbf{G}}
\newcommand{\zero}{\mathbf{0}}
\newcommand{\br}{\mathbf{r}}
\newcommand{\bs}{\mathbf{s}}
\newcommand{\bu}{\mathbf{u}}
\newcommand{\bU}{\mathbf{U}}
\newcommand{\bV}{\mathbf{V}}
\newcommand{\bv}{\mathbf{v}}
\newcommand{\bw}{\mathbf{w}}
\newcommand{\bx}{\mathbf{x}}
\newcommand{\by}{\mathbf{y}}
\newcommand{\bz}{\mathbf{z}}
\newcommand{\rB}{\mathrm{B}}
\newcommand{\rC}{\mathrm{C}}
\newcommand{\rD}{\mathrm{D}}
\newcommand{\rH}{\mathrm{H}}
\newcommand{\rK}{\mathrm{K}}
\newcommand{\rL}{\mathrm{L}}
\newcommand{\rP}{\mathrm{P}}
\newcommand{\rS}{\mathrm{S}}
\newcommand{\0}{\mathbf{0}}
\newcommand{\1}{\mathbf{1}}
\newcommand{\dist}{\mathrm{dist}}
\begin{document}
\title[Complexity of Geometric Programming in the Turing Model]{Complexity of Geometric Programming in the Turing Model and application to Nonnegative Tensors}
\author{Shmuel  Friedland
  and\;
 St\'ephane Gaubert
 }
\date{March 16,  2025
}
\address{
 Department of Mathematics, Statistics and Computer Science,
 University of Illinois at Chicago, Chicago, Illinois 60607-7045,
 USA, \texttt{friedlan@uic.edu},
 }
\address{
INRIA and Centre de Math\'ematiques Appliqu\'ees (CMAP), \'Ecole polytechnique, IP Paris, UMR 7641 CNRS, 91128 Palaiseau C\'edex, \texttt{stephane.gaubert@inria.fr} }
\subjclass[2010]{
05C50,05C65,15A69,68W25,90C25}

\keywords{
Nonnegative tensors, spectral radius,  minimum of convex functions, spectral radius, clique number of hypergraphs, interior-point methods
}
 
\begin{abstract}\bf{
We consider a version of geometric programming problem consisting in
minimizing a function given by the maximum of finitely many log-Laplace transforms of discrete nonnegative measures on a Euclidean space.  
  Under a coerciveness assumption, we show that an $\varepsilon$-minimizer
  can be computed in a time that is polynomial in the input size and in $|\log\varepsilon|$. This is obtained by establishing bit-size estimates on approximate
  minimizers and by applying the ellipsoid method. We also derive
  polynomial iteration complexity bounds for the interior-point method
applied to the same class of problems.
We deduce that the spectral radius of a partially symmetric, weakly irreducible nonnegative tensor can be approximated within an $\varepsilon$-error in polynomial time.  
For strongly irreducible tensors, we show in addition that the logarithm of the positive eigenvector is polynomial time approximable.
Our results also yield that the the maximum of a nonnegative homogeneous $d$-form in the $\ell_d$ unit ball can be approximated in polynomial time.   In particular, the spectral radius of uniform weighted hypergraphs and some known upper bounds for the clique number of uniform hypergraphs are polynomial time computable. In contrast, we provide an example
showing that the Phase I approach needs exponentially many bits
to solve the feasibility problem in geometric programming.}
\end{abstract}
\maketitle
\tableofcontents
\section{Introduction}
\label{sec:sum}
\subsection{Statement of the main results}\label{subsec:majrt}
The log-Laplace transform of a nonnegative measure $\mu$ on $\R^n$ is the map
$\R^n\to \R\cup\{+\infty\}$ defined by $h(\x) = \log \int \exp(\y^\top \x)d \mu(\y)$. (Here and throughout, we consider the elements of $\R^n$ as column vectors.)
Maps of this form are convex~\cite{Kin61}.
They have been widely studied in large deviations theory and in convex geometry, see e.g.~\cite{brazitikos}. 
Focusing on computational aspects, we consider measures with finite support. In this case, a log-Laplace transform can be expressed as 
\begin{align}
  \label{e-llt} h(\x)=\log\sum_{\ba\in\cA} h_{\ba} \exp(\ba^\top \x)\enspace.
\end{align}
where $\cA=\{\ba_1,\ldots,\ba_N\}$ is a nonempty finite subset of $\R^n$, and
$\{h_{\ba}, \ba\in \cA\}$ is a family of positive scalars.
Log-Laplace transforms of the form~\eqref{e-llt} will be referred to as {\em llt functions}. Affine functions are a special case of llt functions, corresponding to the situation where $\cA$ is a singleton.

Assume that $f_0,\ldots,f_N$ are llt functions.
For a positive integer $N$ we let $[N]\coloneqq\{1,\ldots,N\}$,  and set $[0]\coloneqq\emptyset$.  The problem of {\em geometric programming} can be formulated as the following convex optimization problem:
\begin{equation}\label{geoprogst}
\begin{aligned}
&\textrm{minimize} \quad f_0,\\
  &\textrm{subject to} \quad f_i\le 0, \quad i\in[N],
\end{aligned}
\end{equation}
where $N$ is a nonnegative integer. Since affine functions are a special
case of llt functions, this formulation includes linear programming as a special case.

The case $N=0$ has received significant attention~\cite{BLNW20, SV19}.  
With an appropriate change of variable, this case can be extended to {\em signomial
   programming} problems. Their dual formulations are {\em entropy
  maximization} problems, which appear in various applications.
Notable examples include relaxations of polynomial optimization problems
 via sums of nonnegative circuits~\cite{CS16,Dressler2017,DresslerNaumannTheobald+2021+227+236,Theobald2023}, risk-sensitive control~\cite{AB17},
 and games of topological entropy~\cite{asarin,AGGG}.
 
In this paper we consider the following unconstrained version of the geometric programming problem. Specifically, let $f$ be the pointwise maximum of $N$  llt functions
\begin{equation}\label{deff(x)fj}
f(\x)=\max_{j\in[N]}f_j(\x),\qquad 
f_j(\x)=\log\sum_{\ba\in\cA_j} f_{\ba,j} \exp(\ba^\top \x), \enspace.
\end{equation}
Our focus is on approximating the value
\begin{align}
  \label{e-geo} f_{\min}=\inf_{\x\in\R^n} f(\x)
\end{align}
as well as computing an approximate minimizer.

This problem has two motivations.   The first one,  is the feasibility problem in geometrical programming \eqref{geoprogst}: Is the set $\{\x\in\R^n, f_i(\x)\le 0, i\in[N]\}$ nonempty?  See Subsection \ref{subsec:appl}. The second motivation stems from our study of the complexity in the Turing model (also known as the bit model)
of the computation of the spectral radius of partially symmetric tensors, see Subsection \ref{subsec:intros}.
We will assume here that the problem \eqref{e-geo} is well posed, in the sense that
the map $f$ is coercive, meaning that it has compact sublevel sets. This
ensures that the minimum of $f$ is achieved. We shall see that $f$
is coercive iff it satisfies the following elementary condition
\begin{equation}\label{coercons}
\{\x\in\R^n,\ba^\top \x\le 0 \textrm{ for } \ba\in \cA\}= \{\0\},  \quad\text{where } \cA:=\cup_{j\in[N]}\cA_j.
\end{equation}
This condition can be verified in polynomial time by reduction
to a linear programming problem.
Our first result shows that an approximate solution of Problem~\eqref{deff(x)fj}
can be obtained in polynomial time.  Following~\cite[p.~29]{GLS88}, we denote by $\<r>$ the number of bits needed to encode a rational number $r$, and if $\ba \in \Q^n$, we denote by $\<\ba>=\sum_{1\leq i\leq n} \<a_i>$ the encoding length of $\ba$.
The encoding length of $f_i$ is defined as $\<f_i>=\sum_{\ba \in \cA_i} (\<\ba> +\<f_{\ba,i}>)$.
We finally define the encoding length of $f$ as $\langle f\rangle:= \sum_{1\leq i\leq N} \<f_i>$.
\begin{theorem}[The Turing model for $f_{\min}$, see~\Cref{mainthm}]
  \label{mainthm-intro}
  For all $\varepsilon\in \Qpp$, one can compute a vector $\bx^\star\in \Q^n$
  such that $f(\bx^\star)\leq \fmin+ \varepsilon$, in a time that is polynomial
  in $\<f>$ and in $|\log\varepsilon|$.
\end{theorem}
We establish this theorem using the ellipsoid method~\cite{GLS81,GLS88}.
A crucial technical step in this approach is to bound approximate
minimizers. 
This is achieved by introducing the {\em coercivity constant}
with respect to a vector subspace $\mathbf{E}\subset \R^n$
\begin{equation}\label{def-coercive}
\nu(\cA,\mathbf{E})=\min_{\x\in \mathbf{E}, \; \|\x\|_\infty=1}\max\{\ba^\top \x,  \ba\in\cA\} \enspace .
\end{equation}
When $\mathbf{E}=\R^n$, we simplify the notation to $\nu(\cA):=\nu(\cA,\R^n)$.

The following result provides an explicit bound for the minimizers of $f$,
showing that the logarithms of the entries of the minimizer are polynomially
bounded in the input size.
\begin{theorem}[Bound on minimizers]\label{th-localize}
  Suppose that $f$ is coercive. Then, any optimal solution $\bx^\star$ satisfies
  \[
  \|\bx^\star\|_\infty \leq \frac{f(\zero)-t_{\min}(f)}{\nu(\cA)} ,\qquad \text{where }
  t_{\min}(f):=\min\{\log f_{\ba,j}, \ba\in \cA_j, j\in[N]\} \enspace.
  \]
Moreover, if $\cA_j \subset \Z^n$ holds for all $j\in [N]$, we have
  \begin{align}\label{boundnu}
  \nu(\bA) \ge \frac{1}{\prod_{i=1}^n \sqrt{\|\bb_i\|^2 +1}} \enspace,
  \end{align}
  in which the elements of $\cA=\{b_1, \dots,b_M\}$ are ordered by decreasing Euclidean norms, i.e.,
  $\|b_1\|\geq \dots \geq \|b_M\|$.
\end{theorem}
To establish \Cref{mainthm-intro}, 
we reformulate~\eqref{e-geo} as the problem consisting in minimizing the
linear form $(\bx,t)\mapsto t$ over the epigraph of $f$, $\operatorname{Epi}(f)=
\{(x,t)\in \R^n\times \R \mid t\geq f_j(x), 1\leq j\leq N\}$.
Using~\eqref{th-localize},
we show that the minimization problem can be restricted  to an explicit
convex subset $\rK\subset \R^n\times \R$ which is
sandwitched between balls with radii
polynomially bounded in the input size.
We also use the fact that the logarithm and exponential maps can be approximated
in polynomial time on restricted domains~\cite{borwein}, together with a renormalization argument,
to construct a polynomial time weak separation oracle for this set $\rK$,
leading finally to~\Cref{mainthm-intro}.

We now apply~\Cref{mainthm-intro} to compute the spectral radius of a
nonnegative homogeneous map. We consider a $(d-1)$-homogeneous
map $\bF=(F_1,\ldots,F_{n}):\Rpp^{n}\to\Rpp^{n}$,
such that 
\begin{align}
F_i (\bz) = \sum_{\ba\in \cA_i} f_{\ba,i} \bz^\ba, \quad \bz^\ba:= z_1^{a_1}\dots z_n^{a_n},
\qquad \cA_i \subset \N^n \cap (d-1) \Deltap^n, \; f_{\ba,i}\in \Qpp
\text{ for } \ba \in \cA_i \enspace .\label{e-sparse}
\end{align}
Here, $\Deltap^n=\{\bx\in\Rp^n\mid \sum_{i} x_i = 1\}$ denotes the simplex.
The condition $\cA_i \subset (d-1) \Deltap^n$ ensures
that $F(\lambda \bz)= \lambda^{d-1} F(\bz)$ for all $\lambda>0$
and $\bz\in \Rpp^n$.
The input size is $\<F> = \sum_{i\in[n]}\sum_{\ba\in \cA_i} (\<\ba>+\<f_{\ba,i}>)$.
As detailed in~\Cref{subsec:intros}, such maps are associated to partially symmetric nonnegative tensors.

A vector $\z\in\C^n\setminus\{\0\}$ is called an eigenvector of $\bF$ with an eigenvalue $\lambda$ if $\bF(\z)=\lambda\z^{\circ(d-1)}$, where $\z^{\circ(d-1)}=(z_1^{d-1},\ldots,z_n^{d-1})^\top$.  
The maximal value of $|\lambda|$, as $\lambda$ ranges over the eigenvalues,
is called the {\em spectral radius} of $\bF$ and is denoted by $\rho(\bF)$.
We say that $\bF$ is {\em weakly irreducible} if for  $\bz=\1:=(1,\ldots,1)^\top$ the Jacobian $D(\bF)(\1)=[\frac{\partial F_i}{\partial x_j}](\1)$ is an irreducible  matrix.  
If $\bF$ is weakly irreducible, then there is an eigenvector $\z\in\Rpp^n$, unique up to a scalar factor, and the associated eigenvalue $\lambda$ coincides
with $\rho(\bF)$ \cite{FGH13}. Furthermore,  $\rho(\bF)$ has the following ``Collatz-Wielandt'' minimax and maximin characterization \cite[(3.13)]{FG20}:
\begin{equation}\label{minmaxchar}
\rho(\bF)=\min_{\z\in \Rpp^n} \max_{i\in[n]} \frac{F_i(\z)}{z_i^{d-1}}=\max_{\z\in\Rp^n\setminus\{\0\}}\min_{i\in[n],z_i>0} \frac{F_i(\z)}{z_i^{d-1}} \enspace .
\end{equation}
The equality holds if and only if $\z$ is proportional to $\bu$.
\begin{theorem}\label{Fcoercthmnew} Let $\bF=(F_1,\ldots,F_{n}):\Rpp^{n}\to\Rpp^{n}$ be a weakly irreducible $(d-1)$-homogeneous map, and let
  \begin{align}\label{e-def-cw}
  f(\bx) := \max_{i\in [n]} (\log F_i(\exp(\bx))-(d-1)x_i )
  \end{align}
  Then, an $\varepsilon$ minimizer $\bx^\star$ of $f$ can be computed in a time that
  is polynomial in $\<F>$ and in $|\log\varepsilon|$, and $f(\bx^\star)$ provides
  an $\varepsilon$-approximation of $\log \rho(\bF)$.
\end{theorem}
A key technical element in the proof is the following. As before, we set 
$\cA:= \cup_{i\in[n]}\cA_i$.
\begin{lemma}
  Suppose that $\bF$ is weakly irreducible.
  Then, the restriction of the map $f$ in~\eqref{e-def-cw}
  to the hyperplane $\mathbf{E}_n:= \{\bx \in \R^n\mid x_n=0\}$
  is coercive, moreover,
\begin{align}
\nu(\cA,\mathbf{E}_n)\ge \big(4(d-1)^2+1\big)^{-(n-1)/2}\enspace. 
\end{align}
\end{lemma}
The space $\mathbf{E}_n$ is introduced for technical convenience.
More generally, any hyperplane that is transverse to the line generated by the unit vector and possesses a concise rational encoding would yield a useful
bound.
Then, \Cref{Fcoercthmnew} is deduced from~\Cref{mainthm-intro}.

Using \Cref{Fcoercthmnew}, and using an a priori bound on the spectral radius, we establish the following result:
\begin{corollary}\label{cor-specrad}
  The spectral radius of a weakly-irreducible $(d-1)$-homogeneous
  map $\Rp^n\to \Rp^n$ can be approximated
  in polynomial time. 
\end{corollary}
Exploiting the relation between the tensor eigenproblem and the maximization
of a homogeneous form $g$ over the a $\ell_d$ ball (see~\eqref{geignvalvecintro}),
we obtain the following additional corollary.
\begin{corollary}\label{poltcompmudgintro}  Let $g$ be a $d$-homogeneous form with nonnegative rational coefficients, for $d\ge 2$.  Let $\mu_d(g) = \max\{ g(z)\mid \|z\|_d\leq  1\}$.
Then an $\varepsilon$-approximation of $\mu_d(g)$ can be computed in polynomial time in $\varepsilon$ and the coefficients of $g$.
\end{corollary}
This is in contrast to a classical result of
Motzkin and Straus~\cite{MS65}, showing that $A$ is the adjacency matrix of a simple undirected graph $G$ on $n$ vertices with clique size $\omega(G)$, then
\begin{equation}\label{MSeq}
\mu_1(\z^\top A\z)=1-\frac{1}{\omega(G)}.
\end{equation}
In particular, it is NP-hard to approximate $\mu_1(\z^\top A\z)$.

\Cref{poltcompmudgintro} applies to higher-order Markov chains \cite{NQZ09},  first eigenvalue of nonnegative symmetric tensors \cite{CDN14} and to the spectral radius of $d$-uniform weighted hypergraphs \cite{SS82,RBP09,XQ15}.

Fixing a coordinate $i\in[n]$, we call {\em normalized eigenvector} of a weakly irreducible map $\bF$ the only positive eigenvector $\bu$ such that $u_i=1$.
\begin{theorem}\label{th-cex}
  There is no polynomial time algorithm taking as input
a weakly-irreducible $(d-1)$-homogeneous
  map $\Rp^n\to \Rp^n$ 
and returning an $O(1)$ approximation of a normalized eigenvector.
\end{theorem}
To show this result, we rely on the following counter-example, in which the normalized
eigenvectors has entries which are doubly exponential in the input size.
Let $W,d$ be integers with $W\geq 2$ and $d\geq 3$.
We define the homogeneous map of degree $d-1$, $\bF: \Rp^{2n}\to \Rp^{2n}$, 
\begin{equation}\label{Fexamp}
\begin{aligned}
  &F_i(\z)=W z_1^{d-2} z_{i+1} \textrm{ for } i=1,\ldots,n,\\
  &F_{n+i}(\z)=W^{-1} z_{n+1}^{d-2} z_{n+1+i} \textrm{ for } i=1,\ldots,n-1,\\
&   F_{2n}(\bz) =W^{-1} z_1z_{n+1}^{d-2} \enspace .
\end{aligned}
\end{equation}
We shall see that the map $\bF$ is weakly irreducible, so it has an eigenvector
$\bu\in \Rpp^{2n}$ which is unique up to a multiplicative constant. 
We will show that 
$u_{n+1}/u_1 \leq W^{-(d-1)^n}$, which
entails that every $O(1)$ approximation of the eigenvector of $\bF$ normalized by $u_{n+1}=1$
must use an exponential number of bits, leading to~\Cref{th-cex}.
We explain the intuitive probabilistic interpretation behind
this counter-example in~\Cref{rk-proba}.

However, the next result shows that the {\em logarithm} of the eigenvector
can be approximated in polynomial time. To do so, we make a stronger technical
assumption, requiring that $\bF$ be {\em strongly irreducible}, meaning that
the differential $D(\bF)$, evaluated at the unit vector $\1$, is a matrix
with positive off-diagonal entries
\begin{theorem}\label{sirFthm}
  Assume that $\bF$ is strongly irreducible.   Then $\nu(\cA,\mathbf{E}_n)\ge 1$.
  Moreover, for all $\varepsilon\in \Qpp$, an $\varepsilon$-approximation
  of the log of a positive eigenvector of $\bF$ can be computed in a time
  which is polynomial in $\<\bF>$ and $|\log\varepsilon|$.
\end{theorem}
The previous results are established by the ellipsoid method, which is
mostly a theoretical tool. We discuss in~\Cref{sec:intmethod} the application of
the more efficient interior-point methods to the geometric programming problem~\eqref{e-geo}.

We also consider {\em posynomial} maps. The latter extend the maps of the form~\eqref{e-sparse}, allowing the exponents to take rational values. 
We shall see in subsection  \ref{subsec:quashommapspol} 
that Theorems~\ref{Fcoercthmnew} and~\ref{sirFthm}
carry over to homogeneous posynomial maps.
We shall also see in subsection  \ref{subsec:pcmupg} that if $g$ is a homogeneous posynomial form with rational degree $d \ge 2$, and for every rational number $p\ge d$,
the maximum of $g$ over the $\ell_p$ ball,  $\mu_p(g)$,
can be approximated in polynomial time.
\subsection{Motivation from Nonnegative Tensors}\label{subsec:intros}
Multidimensional arrays with $d\ge 3$ indices, which are natural generalization of matrices $d=2$, are ubiquitous  in data science \cite{Liu22},  machine learning \cite{JWLL19},  DNA analysis \cite{OGA07}, mathematics \cite{Lan12}, numerical analysis \cite{Lim21}, optimal transport \cite{Fri20}, quantum information \cite{FECZ22}, as well as other fields.  Many computational problems, concerning tensors, are NP-hard~\cite{HL13}.

A $d$-tensor is an element of the $d$-fold tensor product $\R^{n^{\times d}}:=\otimes^d\R^n$; it is a $d$-dimensional array
with real elements, $\cT=[t_{i_1,\ldots,i_d}], i_1,\ldots,i_d\in[n]:=\{1,\ldots,n\}$.  It is symmetric if $t_{i_{\sigma(1)},\ldots,i_{\sigma(d)}}=t_{i_1,\ldots,i_d}$ for all bijections $\sigma:[d]\to[d]$.  The subspace of symmetric tensors, denoted as $\rS^d\R^n$,  can be identified
to the space $d$-homogeneous polynomials in $n$ variables $\rP(n,d)$.
Indeed, to a symmetric tensor corresponds a real multivariate
polynomial sending $\z=(z_1,\ldots z_n)^\top$ to 
$p(\z)= \sum_{i_1=\cdots=i_d=1}^n t_{i_1,\cdots,i_d}z_{i_1}\cdots z_{i_d}$, and this
correspondence is bijective.   A tensor $\cT$ is called partially symmetric
(in the last $d-1$ variables) if $t_{i_1,i_{\sigma(2)}\ldots,i_{\sigma(d)}}=t_{i_1,\ldots,i_d}$ for all bijections $\sigma: \{2,\dots,d\}\to \{2,\dots,d\}$.
The space of partially symmetric tensors is denoted by $\R^{n^{\times d}}_{ps}$.
A partially symmetric tensor $\cF=[f_{i_1,\ldots,i_d}]\in\R^{n^{\times d}}_{ps}$
corresponds a homogeneous map of degree $d-1$:
\begin{equation}
\bF(\z)=(F_1,\ldots,F_n):\R^n\to \R^n, \qquad 
F_i=\sum_{i_2,\ldots,i_d=1}f_{i,i_2,\ldots,i_d}z_{i_2}\cdots z_{i_d},
\label{e-hommap}
\end{equation}
and again this correspondence is bijective.

Our motivation stems from partially symmetric tensors with {\em nonnegative} elements, or simply {\em nonnegative tensors}. The set of these tensors is denoted as $\R^{n^{\times d}}_{ps,+}$.
A tensor $\cF\in \R^{n^{\times d}}_{ps,+}$ defines a map $\bF$, 
as in~\eqref{e-hommap}, sending $\Rp^n$ to itself.
The tensor $\cF$ is called {\it weakly irreducible}
if the corresponding  map $\bF$ is. The eigenvalues
of $\cF$ are defined as the eigenvalues of this map; see~\cite{Lim05,Qi05,FGH13}.

The tensor eigenproblem arises in particular when maximizing,
over the $\ell_p$ ball,
a homogeneous form of degree $d$ with nonnegative coefficients:
\begin{equation}\label{maxprobqintro}
\mu_p(g):=\max_{\|\z\|_p\le 1} g(\z), \quad p\in [1,\infty].
\end{equation}
This  has been studied in~\cite{LS47,Lim05,Qi05}:
the solution is a positive vector $\w$ such that 
\begin{equation}\label{geignvalvecintro}
\begin{aligned}
  &\frac{1}{d}\nabla g(\w)=g(\w)\w^{\circ (p-1)},  \quad\|\w\|_p=1\enspace .
\end{aligned}
\end{equation}
When $p=d$, we recover the tensor eigenproblem.
More generally, the eigenproblem
approach allows one to handle any $p\geq d$ after appropriate transformations.

Using the minimax characterization~\eqref{minmaxchar},
we see that the computation of the logarithm
of the spectral radius of a nonnegative tensor corresponds to $f_{\min}$ in \eqref{e-geo}.

\subsection{Applications}\label{subsec:appl}

We now present three applications of our results.

\noindent
(a) Consider the {\em feasibility problem} in geometric
programming \eqref{geoprogst} :
\begin{align}\label{e-feasibility}
  \text{Decide whether there exists }\bx\in \R^n \text{ such that }
  f_i(\bx) \leq 0,\text{ for all }i\in [N] \enspace ,
\end{align}
where every $f_i$ is of the form~\eqref{deff(x)fj}.
The natural approach to decide feasibility is to use
the following ``Phase I'' problem, in the spirit of linear
programming:
\begin{align}
\tag{Phase I}\label{phi1}\qquad
\min t,  \qquad t \geq f_i(\bx), \text{ for all }i\in [N], \qquad (\bx,t)\in \R^n\times \R
\enspace.
\end{align}
Indeed, still assuming that $f=\sup_{i\in[N]}f_i$ is coercive, so that the value of the Phase I problem is attained, one gets that the answer to the feasibility Problem~\eqref{e-feasibility} is affirmative if, and only if, the value $t^\star$ of the Phase I problem satisfies $t^\star\leq 0$. The following result shows that the Phase I approach fails
to decide feasibility in polynomial time.
\begin{theorem}[Phase I needs exponentially many bits]\label{th-phase1}
  There is a family of instances of the feasibility problem in geometric programming,
  for coercive functions $f$, containing infinitely many feasible and unfeasible instances, such that
  for every instance,
  the value $t^\star$ of the Phase I problem satisfies $t^\star= 2^{2^{-\Omega(\<f>)}}$, the multiplicative constant in the $\Omega(\cdot)$ term being independent of the instance.
\end{theorem}
This family of instances is constructed by considering a perturbation
of the map~\eqref{Fexamp}.\\

\noindent
(b) In a number of applications, one has to solve a minimization problem of the form
\begin{align}
  \min\{\bc^\top x: \ f_i(\x)\le 0, i\in[N]\} \enspace,\label{e-optimfeas}
\end{align}
where every $f_i$ is of the form~\eqref{deff(x)fj} and $f$ is coercive,
see~\cite[Section 5.1]{Dressler2017}. \Cref{th-phase1} entails that there is no hope to get a universal polynomial-time approximation result by off-the shelve methods like the ellipsoid (or the interior points). However, the difficulty
of the feasibility problem suggests to introduce a condition number
$\kappa(f)=( \max(0,-f_{\min}))^{-1}$, so that $\kappa<\infty$ whenever
problem~\eqref{e-feasibility} is strictly feasible. Then, it
follows from the present approach that when $f$ is coercive,
the optimization problem~\eqref{e-optimfeas} restricted
to instances with an a priori upper bound on $\kappa$ can be solved
approximately in polynomial time. We leave the more substantial aspects of the complexity of the optimization problem~\eqref{e-optimfeas} as an open question.\\

\noindent
(c) The geometric programming problem providing the logarithm of the spectral radius
has a dual version, which involves
an entropic programming problem. Following~\cite[Section 6.3]{FG20},
a tensor $\mu=[\mu_{i_1,\ldots,i_d}]\in \R_{ps,+}^{n^{\times d}}$ is called
an {\em occupation measure} if
\begin{eqnarray*}
 \sum_{i_1\in [n],\dots, i_d\in [n]}\mu_{i_1,i_2,\dots, i_d}=1,
 \sum_{i_2,\dots,i_d\in [n]} 
 \mu_{j,i_2,\dots,i_d} =\displaystyle\sum_{i_1,i_3,\dots,i_d\in [n]
}
 \mu_{i_1,j,i_3,\dots,i_d} ,
\end{eqnarray*}
for all $j\in [n]$.  
\todo[inline]{SF: I think that this is the correct statement for occupational measure.  When one sums on $j$ in the second equality one gets the equality $1=1$!}
We denote by $\fO(n^{\times (d-1)})\subset \R_{ps,+}^{n^{\times d}}$ the set of occupation measures.
For $\cT\in\R_{ps,+}^{n^{\times d}}$ we denote by 
$\supp \cT$ the \emph{support} of the tensor $\cT$, i.e.,
$
\supp \cT:= \{(i_1,\dots,i_d)\mid t_{i_1,\dots, i_d}>0 \} $, and by
$\fO(n^{\times (d-1)},\supp \cT)\subseteq \fO(n^{\times (d-1)})$ the set of occupation measures whose support is contained in $\supp \cT$.

Assume that $\bF$ is weakly irreducible.
Then there exists a unique positive  eigenvector $\bu>\0$  (up to rescaling) such that
$\bF(\bu)=\rho(\bF)\bu^{\circ(d-1)}, \bu>\0$.
Let $D(\bF)(\x)=[\frac{\partial F_i(\x)}{\partial x_j}]$ be the Jacobian matrix of $\bF$.
Set $A:=\diag(\bu)^{-(d-2)}D(\bF)(\bu),  \diag(\bu)=\diag(u_1 ,\cdots,u_{n})$.
Recall that $A$ is a nonnegative irreducible matrix that satisfies \cite[(3.18)]{FG20}:
\begin{equation*}
A\bu=(d-1)\rho(\bF)\bu, \quad A^\top\w=(d-1)\rho(\bF)\w,\quad\w^\top \uu=1, \qquad \bw\in \Rpp^n \enspace .
\end{equation*}
Theorem 19 in \cite{FG20} states that
the spectral radius of  a nonzero nonnegative map $\bF=(F_1,\ldots,F_{n})^\top$,  has the following dual characterization 
\begin{equation}
\begin{aligned}
\log \rho(\bF) = \max_{\mu\in \fO(n^{\times(d-1)})}
\sum_{i_1,\dots,i_d\in [n]}
\mu_{i_1,\dots, i_d} \log \Big(\frac{(\sum_{k_2,\dots, k_d}\mu_{i_1,k_2,\dots, k_d})f_{i_1,\dots, i_d}}{\mu_{i_1,\dots, i_d}}\Big) \enspace .
\end{aligned}\label{e-dual}
\end{equation}
Moreover, the unique optimal solution of the dual problem is given by
\begin{equation*}
\mu^\star_{i_1,\ldots,i_d}=\frac{1}{\rho(\bF)} w_{i_1}u_{i_1}^{-(d-2)}f_{i_1,\ldots,i_d}u_{i_2}\cdots u_{i_d} \textrm{ for } i_1,\ldots,i_d\in[n].
\end{equation*}
We are interested in the computation of $\mu^\star$. Consistently with~\eqref{e-sparse},
we take into account sparsity and symmetry to encode the occupation measure  $\mu^\star$.
I.e., we write $\mu^\star=(\mu^\star_{i\cdot})_{i\in [n]}$, and for all $i\in[n]$, we identify $\mu^\star_{i\cdot}$ to a map $\cA_i \to \Rpp$. Then, we denote by $\log\mu^\star$ the entrywise logarithm
of $\mu^\star$.
As a corollary of~\Cref{sirFthm}, we get:
\begin{corollary}
  If $\bF$ is strongly irreducible, and if $\mu^\star$ denotes the optimal solution
  of the dual problem~\eqref{e-dual},
  then, we can approximate $\log\mu^\star$ in polynomial time.
  \end{corollary}
(c) Let $2^{[n]}_d$ denote all subsets of $[n]$ of cardinality $d\in[n]$.
Assume that $\cE\subset 2^{[n]}_d$.  A simple $d$-uniform hypergraph corresponding to the set of hyperedges $\cE$, denoted as $\cH=([n],\cE)$
can be described by a symmetric tensor $\cH(\cE)=[h_{i_1,\ldots,i_d}]\in \rS^d\R^{n}$, where
$h_{i_1,\ldots,i_d}=1$ if $\{i_1,\ldots,i_d\}\in\cE$, and $0$ otherwise.
A weighted $d$-uniform hypergraph $\cH=([n],\cE,\cW(\cE))$ is given by
$\cW(\cE)=[w_{i_1,\ldots,i_d}]\in \rS^d_+\R^{n}$, where $w_{i_1,\ldots,i_d}>0\iff \{i_1,\ldots,i_d\}\in\cE$.
Associate with $\cH$ the multilinear polynomial $g(\x)=d! \sum_{\{i_1,\ldots,i_d\}\in\cE}x_{i_1}\cdots x_{i_d}$.  Then $\mu_d(g)$ is the spectral radius of the uniform weighted hypergraph $\cH(\cE)$.  If $g$ is irreducible then eigenvector $\bu>0$ defined as in (a) is the unique eigenvector corresponding to $\cH(\cE)$.  
Our results imply that $\mu_d(g)$  can be approximated in polynomial time.  If $g$ is strongly irreducible then $\log\bu$ can be approximated in polynomial time. 

A clique $\cC$ in $\cE$ is subset of $[n]$ such that $2^{\cC}_d\subset \cE$.
Denote by $\omega(\cE)$ the maximum cardinality of a clique in $\cE$.  Then
$\prod_{i=1}^{d-1} (\omega(\cE)-i)\le \rho(\cH(\cE))$.  
This inequality is sharp if $\cE$ is a union of disjoint cliques in $2^{[n]}_d$.
As $\rho(\cH(\cE))$ is polynomial time approximable, the above inequality gives a computable upper bound to $\omega(\cE)$.

\subsection{Related works}\label{subsec:prior}
Singh and Vishnoi studied in~\cite{SV14} the bit-complexity of a class of entropy maximization problems, which corresponds to the dual of the present geometric programming problem, specialized to $N=1$. They provided polynomial-time approximability results. A key technical ingredient of their result is a bound on the optimal solution, based on a notion of conditioning, requiring the point $0$ to be ``deep inside'' the convex hull of $\cA$. More precisely,
denoting by $\eta$ the distance of $\zero$ to the boundary of the convex hull of $\cA$,
they show that the minimizer is of norm bounded in terms of $1/\eta$.
Here, we deal more generally with the minimization of the supremum of $N$ forms,
relying on the coercivity constant $\nu(\cA)$ in \eqref{def-coercive}.
When $N=1$, it can be checked that $\eta \geq \nu(\cA) \geq  \eta /\sqrt{n}$,
so $\eta$ and $\nu$ are condition numbers of a similar nature; $\nu(\cA)$ has the advantage
to coincide with the value of a linear program, leading to the explicit bound for the coercivity constant~\eqref{boundnu}.

B\"urgisser, Li, Nieuwboer and Walter also treat in~\cite{BLNW20} the minimization of the log of the Laplace transform of a nonnegative measure finitely supported in $\R^n$,  corresponding again to the
case $N=1$ in \eqref{deff(x)fj}.
Their approach uses a condition assumption, \cite[condition (b), page 1]{BLNW20},
similar to the one of~\cite{SV14}. 
They analyzed the complexity of the interior-point method, and provided a polynomial bound in the number of iterations (i.e., considering iteration complexity rather than bit-complexity). We check in~\Cref{sec:intmethod} that their interior-point approach carries over to maxima of $N$ forms. 
In Section 7.2 of  \cite{BDMWW24} B\"urgisser-Do\u{g}an-Makam-Walter-Wigderson consider the case $N=1$ under the assumption that $f$ is coercive.  They observe, as in Theorem \ref{mainthm-intro},  that one can find in polynomial time $\bx\in \Q^n$ such that $f(\bx)\le f_{\min}=f(\bx^{\star})+\varepsilon$.   In Example 7.4 
 authors show that $f(\x)-f_{\min}$ is exponentially small in an integer $N$ and doubly exponentially
small in the input bit-length, and yet $\|\x-\x^{\star}\|=1$. 
 This is similar in spirit to our Theorem \ref{th-cex}. 
 
Stratszak and Vishnoi
studied in~\cite{SV19} the ill-conditioned case, in which $\zero$ can be arbitrarily close to the boundary
of the convex hull of $\mathcal{A}$ (so that the $\eta$ parameter is arbitrarily small). They
exploited a notion of ``unary facet complexity'' of the Newton polytope of $f$,
``$\operatorname{ufc}$'', showing that there is a $\varepsilon$-minimizer
of norm $O(\operatorname{ufc}|\log\varepsilon|)$ (in which the
$O(\cdot)$ ignores terms depending on the dimension, cardinality of $\cA$, and
on the coefficients of $f$).
B\"urgisser, Li, Nieuwboer and Walter~\cite{BLNW20}
defined a geometric notion of ``facet gap'', which 
extends the
notion of unary facet complexity to instances with real rather than rational
exponents, and leads to refined bounds. We believe that the ``facet gap'' conditioning approach of~\cite{BLNW20} carries over to our setting with the supremum of $N$ forms, allowing us to obtain polynomial time results in the non-coercive case,
we leave this for further work.

The membership in P of the feasibility problem in geometric
programing is an open question.
Etessami, Steward, and Yannakakis showed in~\cite{Etessamietal}
that this problem is at least as hard
as a problem in arithmetic complexity, PosSLP (positive straight-line-program),
whose membership in P is a longstanding open question.
The fact that the Phase I approach requires exponential precision
to decide feasibility (\Cref{th-phase1})
may be seen as a further evidence of the difficulty of geometry programming.

The spectral radius of nonnegative tensors has received much attention, after the work
of Lim~\cite{Lim05} extending the notion of singular value to tensors, and of Qi~\cite{Qi05} studying the tensor eigenproblem.
To our knowledge, our results for tensors and maximum of homogeneous polynomials with nonnegative coefficients,  showing that the spectral radius and the logarithm of the eigenvector are polynomial time approximable, are new.
Ng,  Qi,  and G. Zhou developed in~\cite{NQZ09}
a power method, relying on the Collatz-Wielandt characterization~\eqref{minmaxchar},
to approximate the spectral radius and the associated eigenvector.
This method is well adapted to large scale instances,
but is does not lead to a polynomial bound. Indeed, parameterized bounds based on contraction properties in Hilbert metric can be obtained along the lines of~\cite{akianmfcs}, but even in the simple case of nonnegative matrices, i.e., tensors of degree $2$, the bounds obtained in this way are only pseudo polynomial.
An alternative approach, allowing one to find all the eigenvalues and eigenvectors of symmetric tensors using Lasserre relaxation, has been developed in~\cite{CDN14}.

Finally, we point out that our approach (especially the proof of~\Cref{mainthm})
is partly inspired by the proof of \cite[Theorem 15]{AGGG}, concerning a special geometric program arising from entropy games, in which
the bound on minimizers was obtained by an ad hoc argument
from Perron-Frobenius theory. Here, we develop general complexity
estimates relying the coercivity constant $\nu$.

\subsection{Short summary of the rest of the paper}\label{sec:sumcont}

In Section \ref{sec:coerc}, we discuss the coerciveness of $f(\x)$: necessary and sufficient conditions for $f$ to have compact sublevel sets: $f(\x)\le t$ for all $t\in\R$.
In Section \ref{sec:minpoints}, we discuss elementary properties of
the set of minimizers of $f$, under the coerciveness condition.  
In Section \ref{sec:boundKmin}, we  introduce a compact convex subset
$K(f,R)$ of the epigraph of $f$, which is useful to ``localize'' the minimization problem for $f$.

Section \ref{sec:ptcom} is the main technical section of this paper,
where we prove the polynomial time complexity of computing $f_{\min}$, the minimum of $f$, with an accuracy $\varepsilon$. This is equivalent to minimizing a linear function over $K(f,R)$.
Then, by applying the classical ellipsoid method, elaborated in \cite{GLS81,GLS88},  we prove our result.  

Section \ref{sec:intmethod} shows how to apply the IPM to find an approximation to $f_{\min}$, still working on the localized subset $K(f,R)$.   The construction of the barrier function for this set
is inspired by 
the barrier function introduced in \cite{BLNW20}.

In Section \ref{sec:minconfun}, we show the polynomial time approximability
of the spectral radius $\rho(\bF)$.
To establish this result, we exploit the coerciveness of the function $f$
restricted to a subspace of codimension $1$.  In subsection \ref{subsec:expswit} we provide an example of  a weakly irreducible $\bF:\R^{2n}\to\R^{2n}$, 
with the normalize eigenvector with some entries doubly exponential
in the input size,
and deduce that it is not possible in general to give a coarse approximation
in polynomial time of the positive eigenvector of a weakly irreducible partial symmetric tensor.  

In Section~\ref{sec:aplogeig}, we shows that if $\bF$ is strongly irreducible,
meaning that $D(\bF)$, the Jacobian of the map $\bF$, has positive off-diagonal entries for $\z\in\Rpp^{n}$, then an approximation of the positive log-eigenvector can be computed is polynomial time.

In Section~\ref{sec:symten}, we discuss the applications of our results to the following cases.
In Subsections  \ref{subsec:stenhp} and  \ref{subsec:pthmmupg}   we show that the maximum $\mu_d(g)$ of a nonnegative $g$ of degree $d$ form over the $\ell_d$ unit sphere,  is polynomial time computable.  
Subsection \ref{subsec:unhgr} discusses the spectral radius of uniform hypergraphs.  Proposition \ref{ubcliqueghr} gives a lower bound on the spectral radius
of uniform hypergraphs using the clique number of the hypergraphs.
This bound is closely related to the bounds in \cite{RBP09,XQ15}.  Since the spectral radius of a uniform hypergraph is polynomially computable, the above bound gives a computable upper bound for the clique number of a hypergraph. 
Subsection \ref{subsec:dihyper} discusses briefly the notion of dihypergraphs that is induced by nonnegative homogeneous maps of $\bF:\Rpp^{n}\to \Rpp^{n}$.
The spectral radius of a dihypergraph is $\rho(\bF)$.
Subsection \ref{subsec:quashommapspol} shows polynomial time computability of the spectral radius of weakly irreducible homogeneous posynomial maps, and $\mu_d(g)$ of $d$-homogeneous posynomial forms.
Subsection \ref{subsec:pcmupg} shows the polynomial time computability of $\mu_p(g)$ for $p\ge d\ge 2$  for $d$-homogeneous posynomial forms, where $p$ and $d$ are rational.  Subsection \ref{sec:pqmnorms} shows that the mixed norm $\|A\|_{p,q}$ for $A\in\Q^{m\times n}_{+}$ and $q\in\N, q\le p\in\Q$ is polynomial time computable.

\section{Coerciveness condition}\label{sec:coerc}
In this paper we use the following notations:
For $\x\in\R^n$, and $p\in[1,\infty]$, we denote
by $\|\x\|_p=(\sum_{i=1}^n|x_i|^p)^{1/p}$ the $\ell_p$ norm of $\bx$.
We denote by $\rB_p(\bc,r)$ the ball $\{\x\in\R^n: \|\x-\bc\|_p\le r\}$ centered at at point $\bc\in \R^n$. We denote by $\Rp$ and $\Rpp$ the sets of nonnegative and positive numbers, respectively. A similar notation applies to $\Q$ and $\Z$. 
For column vectors $\x\in\R^n, \y\in\R^m$,  we set $(\x,\y):=\begin{bmatrix}\x\\\y\end{bmatrix}\in\R^{n+m}$. 

We consider a function $f$ as in~\eqref{deff(x)fj}
Recall that $\cA=\cup_{j=1}^N \cA_j$, and set
\begin{equation}\label{defnu(A)}
\begin{aligned}
&t_{\min}(f):=\min\{\log f_{\ba,j}, \ba\in \cA_j, j\in[N]\},\\
&\nu(\cA)=\min_{\|\x\|_\infty=1}\max\{\ba^\top \x,  \ba\in\cA\} \enspace 
\end{aligned}
\end{equation}
\begin{lemma}\label{coerclemma}Let $f(\x)$ be defined by \eqref{deff(x)fj}.
The following conditions are equivalent
\begin{enumerate}[label=(\alph*)]
\item The set $\{\x\mid f(\x)\le t\}$ is compact for all $t\in\R$. 
\item The set $\{\x\mid f(\x)\le f(\x_1)\}$ is compact for some $\x_1\in\R^n$.
\item The following condition holds:
\begin{equation}\label{coercon}
  \{\x\in\R^n,\ba^\top \x\le 0 \textrm{ for } \ba\in\cA\}= \{\0\}
  \enspace . 
\end{equation}
\item The convex hull of $\cA$ contains a neighborhood of $\0$.
\item
We have $\nu(\cA)>0$ and 
\begin{equation}\label{nucond}
f(\x)\ge t_{\min}(f)+\nu(\cA)\|\x\|_\infty.
\end{equation}
\end{enumerate}
\end{lemma}
\begin{proof}
(a) $\Rightarrow$ (b): Trivial.

\noindent
(b) $\Rightarrow$ (c).  Assume to the contrary that there exist $\x_0\in\R^n\setminus\{\0\}$ such that $\ba^\top\x_0\le 0$ for all $\ba\in\cA$.  Then for each $t>0$ we have $\ba^\top(t\x_0)\le 0$ for all $\ba\in\cA_j$ and $j\in[N]$.  Therefore $f(t\x_0+\x_1)\le f(\x_1)$.  Hence the sublevel set $f(\x)\le f(\x_1)$ is not compact.

\noindent
(c) $\Rightarrow$ (d).  Let $M\coloneqq |\cA|$,  and  $A=[a_{ik}]\in\R^{n\times M}$,  where the columns of $A$ are the vectors $\ba\in\cA$.   Hence the system of inequalities $\ba^\top \x\le \0, \ba\in\cA$ is equivalent to $A^\top \x\le \0$.    Denote by $\rK\subset \R^n$ the polytope spanned by $ \cA$.  
Assume that $\0\not\in \rK$ or $\0\in\partial \rK$.   Then, by the weak-separation theorem (see e.g.\ Theorem~11.6 of~\cite{rockafellar}),
there exists $\x_0\in\R^n\setminus\{\0\}$ such that $A^\top \x_0\le \0$,  which contradicts \eqref{coercon}. Hence, $0$ is an interior point of $\rK$.

\noindent
(d) $\Rightarrow$ (c).   Assume to the contrary that there exists $\x\in\R^n\setminus\{\0\}$ such that $A^\top \x\le \0$.   As a neighborhood of $\0$ is contained in $K$, there exists a probability vector $\z\in\R^m$ such that $ A\z=r\x$ for some $r>0$.
Therefore $\z^\top A^\top\x=r\x^\top\x>0$ contradicting the assumption that $A^\top \x\le \0$.
 
\noindent
(c) $\Rightarrow$ (e).  Assume to the contrary that $\nu(\cA)\le 0$.
Hence, there exists $\x_0\in\R^n, \|\x_0\|_{\infty}=1$ such that  $\ba^\top\x_0\le 0$ for all $\ba\in\cA$.   This contradicts (c).  Hence $\nu(\cA)>0$.   

Consider now a vector $\x$ such that $\|\x\|_{\infty}=r>0$.  
Then
\begin{equation}\label{lbfnu}
\begin{aligned}
&f(\x)\ge  t_{\min}(f)+\|\x\|_{\infty}\max\{\ba^\top \big((1/r) \x\big),
\ba\in \cA\}\\
&\ge
t_{\min}(f)+\nu(\cA)\|\x\|_\infty.
\end{aligned}
\end{equation}

\noindent
(e) $\Rightarrow$ (a).  Let $t\in \R$.  Clearly, if 
\begin{equation*}
\|\x\|_{\infty}>\frac{t-t_{\min}(f)}{\nu(\cA)},
\end{equation*}
then $f(\x)>t$.  Hence $\{\x\mid f(\x)\le t\}$ is a compact set.
\end{proof}
The condition $(d)$ is standard, compare for instance with \cite[condition (b) page 2]{BLNW20} dealing with the special case $N=1$.
\begin{corollary}\label{corcoerc}  Assume that $f$ is defined by \eqref{deff(x)fj} is coercive.  Then the cardinality of the set $\cA$ is at least $n+1$.
\end{corollary}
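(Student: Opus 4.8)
The plan is to read the statement off directly from the equivalence (a) $\Leftrightarrow$ (d) in Lemma~\ref{coerclemma}. Since $f$ is assumed coercive, part (d) tells us that the convex hull of $\cup_{j=1}^N\cA_j$ contains a neighborhood of $\0$ in $\R^n$. In particular this convex hull has nonempty interior, hence is not contained in any affine hyperplane of $\R^n$; equivalently, the finite set $\cup_{j=1}^N\cA_j$ affinely spans $\R^n$.

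The second step is the elementary fact that the affine hull of a set of $k$ points in $\R^n$ has dimension at most $k-1$. Consequently, any subset of $\R^n$ whose affine hull is all of $\R^n$ must have at least $n+1$ elements. Applying this to $\cup_{j=1}^N\cA_j$ gives $|\cup_{j=1}^N\cA_j|\ge n+1$, which is the claim.

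There is no genuine obstacle here: the corollary is a one-line consequence of Lemma~\ref{coerclemma}. The only subtlety worth noting is that one might instead try to argue straight from condition (c): if $\cup_{j=1}^N\cA_j$ had at most $n$ elements, the homogeneous system $\ba^\top\x\le 0$, $\ba\in\cup_{j=1}^N\cA_j$, should admit a nonzero solution, contradicting \eqref{coercon}. When the number of inequalities is strictly less than $n$ this follows from a dimension count on the solution space of the corresponding equalities, but when it equals $n$ one must additionally observe that the polar of a cone generated by $n$ vectors in $\R^n$ is still nontrivial. Routing the proof through part (d) sidesteps this case distinction, which is why I would take that route.
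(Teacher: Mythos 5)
Your argument is correct and is exactly the intended one: the paper states the corollary without proof as an immediate consequence of Lemma~\ref{coerclemma}, and reading it off equivalence (a) $\Leftrightarrow$ (d) — an interior point of the convex hull forces the set to affinely span $\R^n$, hence to have at least $n+1$ elements — is the natural one-line derivation. Your side remark about the extra case distinction needed if one argues from condition (c) instead is accurate but not required.
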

The coercivity constant $\nu(\cA)$ is the best constant such that
\[
\max\{\ba^\top \x,  \ba\in\cA\}  \geq \nu(\cA) \|\x\|_\infty, \forall \bx\in\R^n \enspace .
\]
By convex duality, for any conjugate exponents $p,q\in [1,\infty]$, so that $1/p+1/q=1$,
we have that
$\max\{\ba^\top \x,  \ba\in\cA\}   \geq r \|\x\|_q$ holds for all $\bx\in \R^n$
iff $B_p(\zero,r) \subset \operatorname{conv}(\cA)$.
I.e., setting
$r_p(\cA):= \max\{ r\geq 0\mid B_p(\zero,r) \subset \operatorname{conv}(\cA)\}$,
we see that $\nu(\cA)=r_1(\cA)$. Moreover, $r_2(\cA) \leq r_1(\cA)\leq \sqrt{n}r_2(\cA)$.
When $N=1$, the quantity $r_{2}(\cA)$, i.e. the Euclidean distance of $\zero$
to the boundary of $\operatorname{conv}(\cA)$, has been used in~\cite{SV14,BLNW20} to parameterize
the complexity of the geometric programming problem~\eqref{e-geo}.

\section{Points of minimum}\label{sec:minpoints}
The Hessian of the log-Laplace transform of a nonnegative measure
can be interpreted as the covariance matrix
of a random variable with the same support as this measure
(see e.g. proof of~\cite[Prop~7.2.1]{brazitikos}). Hence,
as soon as this support contains an affine generating family,
this Hessian is positive definite. In this way,
we readily deduce the following
standard result (the second statement appeared in~\cite[Prop.~1]{CaGaPo:18}).
\begin{lemma}
  \label{nsconv}
  Let $\vec{E}$ be an affine vector space generated by $\cA$.  Then the restriction of $f$ to $\vec{E}$ is strictly convex.   In particular,  the function $f$ is strictly convex on $\R^n$ if, and only if,  the vectors $\ba\in \cA$ constitute an affine generating family of $\R^n$.
\hfill\qed
\end{lemma}

Denote by $\rK_{\min}(f)\subseteq\R^n$ the set of minimal points of $f$ given by 
\eqref{deff(x)fj}.
Then $f$ does not achieve its minimum in $\R^n$ if and only if $\rK_{\min}(f)=\emptyset$.
The following is a standard property of the set of minimizers of a coercive and convex function. 

\begin{lemma}[{\cite[Prop.~1.2 p.~35]{ekelandtemam}}]\label{minplemma1}
Suppose that $f$ is coercive. Then, $\rK_{\min}(f)$ is a non-empty compact convex set.
\end{lemma}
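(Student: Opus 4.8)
The statement to prove is Lemma~\ref{minplemma1}: if $f$ is coercive, then $\rK_{\min}(f)$ is a non-empty compact convex set. Although the lemma is cited to Ekeland--Temam, let me sketch a self-contained argument.

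\emph{Non-emptiness.} The plan is to invoke the classical Weierstrass extreme value theorem. Fix $t_0 := f(\0)$, which is a finite real number since $f$ is finite-valued on all of $\R^n$ (each $f_{\cA_j}$ is a logarithm of a sum of finitely many positive exponentials, hence finite). By coercivity, the sublevel set $S := \{\x\in\R^n : f(\x)\le t_0\}$ is compact, and it is non-empty since $\0\in S$. Moreover $f$ is continuous (again, a finite max of continuous functions). A continuous function on a non-empty compact set attains its minimum, so there exists $\x^\star\in S$ with $f(\x^\star)\le f(\x)$ for all $\x\in S$. For $\x\notin S$ we have $f(\x) > t_0 \ge f(\x^\star)$, so $\x^\star$ is in fact a global minimizer; hence $\rK_{\min}(f)\ne\emptyset$, and $f_{\min}=f(\x^\star)$ is attained.

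\emph{Convexity.} Each $f_{\cA_j}$ is convex (the standard log-Laplace-transform convexity recalled in the excerpt), so $f=\max_j f_{\cA_j}$ is convex as a pointwise maximum of convex functions. For any convex function the set of minimizers is exactly the sublevel set $\{\x : f(\x)\le f_{\min}\}$, which is convex because sublevel sets of convex functions are convex. Hence $\rK_{\min}(f)$ is convex.

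\emph{Compactness.} Since $\rK_{\min}(f) = \{\x : f(\x)\le f_{\min}\}$, it is a sublevel set of $f$ at the real level $t=f_{\min}$, and coercivity says precisely that every such sublevel set is compact. This completes the three assertions.

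\textbf{Main obstacle.} There is no substantial obstacle here; the only point requiring a modicum of care is to confirm that $f$ is indeed finite-valued and continuous on all of $\R^n$ (so that $f(\0)$ is a legitimate finite reference level and Weierstrass applies), and that the set of minimizers of a convex function coincides with a single sublevel set rather than something more complicated. Both are routine: finiteness follows because each $\cA_j$ is a finite set and each coefficient $f_{\ba,\cA_j}$ is positive, so each $f_{\cA_j}(\x)$ is the logarithm of a strictly positive finite sum; continuity then follows since $\log$, finite sums, exponentials, and finite maxima all preserve continuity. Alternatively, one simply cites \cite[Prop.~1.2 p.~35]{ekelandtemam} as the authors do, the above being the content of that reference specialized to the present $f$.
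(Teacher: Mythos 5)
Your proof is correct and is exactly the standard argument behind the result the paper simply cites from Ekeland--T\'emam: Weierstrass on a compact sublevel set for non-emptiness, convexity of sublevel sets of the convex function $f$ for convexity, and coercivity for compactness of $\rK_{\min}(f)=\{\x : f(\x)\le f_{\min}\}$. Nothing to add.
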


We now discuss the characterization of $\rK_{\min}(f)$.  For each $\x\in\R^n$ define the nonempty set $\cB(\x)\subseteq [N]$:
\begin{equation}\label{uniqmin1}
\begin{aligned}
f_j(\x)=f(\x) \textrm{ for } j\in\cB(\x),\\
f_{j}(\x)<f(\x) \textrm{ for } j\not\in \cB(\x).
\end{aligned}
\end{equation}
\begin{lemma}\label{uniqmin}
Assume that $\rK_{\min}(f)\ne \emptyset$.  Then $\x_0\in \rK_{\min}(f)$ if and only if the following condition holds:   the system of linear inequalities 
\begin{equation}\label{uniqmin2}
\nabla f_{j}(\x_0)^\top \y<0,  j\in\cB(\x_0)
\end{equation}
has no solution.

Moreover, the following conditions are equivalent 
\begin{enumerate}[label=(\alph*)]
\item The dimension of $\rK_{\min}(f)$ is at least $1$. 
\item Assume that $\x_0\in\rK_{\min}(f)$.  There exists $\y\in\R^n\setminus\{\0\}$ such that  the inequality 
\begin{equation}\label{uniqmin3}
\nabla f_{j}(\x_0)^\top \y\le 0,  j\in\cB(\x_0)
\end{equation}
holds.  Furthermore,  let 
\begin{equation*}
\cC(\x_0,\y)=\{j\in \cB(\x_0),   f_{j}(\x_0)^\top \y=0\}.
\end{equation*}
Then for each $j\in \cC(\x_0,\y)$ one has the equality
\begin{equation}\label{uniqmin4}
\ba^\top \y=0,  \,\ba\in\cA_j,\,j\in \cC(\x_0,\y).
\end{equation}
\end{enumerate}

More precisely,  $\dim\rK_{\min}(f)=d\ge 1$
if and only if 
the following conditions hold:
For each $\x_0\in\rK_{\min}(f)$ the set of 
$\y\in \R^n\setminus\{ \0\}$ satisfying the condition (b) together with $\0$ is a cone $\rK(\x_0)\subseteq\R^n$ of dimension $d$.
\end{lemma}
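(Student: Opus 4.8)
The plan is to prove the lemma by a sequence of convex-analytic arguments, first establishing the inequality characterization \eqref{uniqmin2}, then deducing the equivalence (a)$\Leftrightarrow$(b), and finally the dimension count.

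\textbf{Step 1: The characterization of $\rK_{\min}(f)$ via \eqref{uniqmin2}.} First I would observe that, since each $f_{\cA_j}$ is smooth and convex, the function $f = \max_j f_{\cA_j}$ is convex, and its directional derivative at $\x_0$ in direction $\y$ is $f'(\x_0;\y) = \max_{j \in \cB(\x_0)} \nabla f_{\cA_j}(\x_0)^\top \y$ (this is the standard rule for directional derivatives of a max of smooth functions, using that only the active indices matter near $\x_0$). A point $\x_0$ is a minimizer of the convex function $f$ if and only if $f'(\x_0;\y) \ge 0$ for all $\y \in \R^n$. Negating: $\x_0$ fails to be a minimizer iff there exists $\y$ with $\max_{j\in\cB(\x_0)}\nabla f_{\cA_j}(\x_0)^\top\y < 0$, i.e. iff the system \eqref{uniqmin2} has a solution. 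This gives the first claim directly. I would phrase this carefully, perhaps invoking a standard reference (e.g.~\cite{ekelandtemam} or a convex analysis text) for the directional derivative formula and the first-order optimality condition.

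\textbf{Step 2: The equivalence (a)$\Leftrightarrow$(b).} Here I would use that $\rK_{\min}(f)$ is a non-empty compact convex set (Lemma~\ref{minplemma1}). If $\dim \rK_{\min}(f) \ge 1$, pick $\x_0, \x_1 \in \rK_{\min}(f)$ distinct and set $\y = \x_1 - \x_0 \ne \0$. For each $j \in \cB(\x_0)$, convexity of $f_{\cA_j}$ together with $f_{\cA_j}(\x_0) = f_{\min}$ and $f_{\cA_j}(\x_1) \le f(\x_1) = f_{\min}$ forces $\nabla f_{\cA_j}(\x_0)^\top \y \le 0$ (the derivative at the left endpoint of a convex function that does not increase over $[\x_0,\x_1]$ is $\le 0$), giving \eqref{uniqmin3}. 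Moreover if $j \in \cC(\x_0,\y)$, then $f_{\cA_j}$ restricted to the segment $[\x_0,\x_1]$ is convex, equals $f_{\min}$ at $\x_0$, has derivative $0$ at $\x_0$, and is $\le f_{\min}$ at $\x_1$; hence it is constant $\equiv f_{\min}$ on the segment. A convex function that is affine (indeed constant) on a segment has vanishing Hessian in that direction along the segment, and since the Hessian of $f_{\cA_j}$ is (up to a positive scalar) the covariance matrix $\sum_{\ba} p_{\ba}(\ba - \bar\ba)(\ba-\bar\ba)^\top$ of the associated probability weights, $\y^\top \nabla^2 f_{\cA_j} \y = 0$ forces $\ba^\top \y$ to be the same constant for all $\ba \in \cA_j$; combined with $\nabla f_{\cA_j}(\x_0)^\top\y = \sum_\ba p_\ba \ba^\top \y = 0$ (as $j \in \cC(\x_0,\y)$), that constant is $0$, which is \eqref{uniqmin4}. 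Conversely, given $\y \ne \0$ as in (b), I would show that $\x_0 + s\y \in \rK_{\min}(f)$ for all small $s > 0$: for $j \in \cB(\x_0) \setminus \cC(\x_0,\y)$ we have $\nabla f_{\cA_j}(\x_0)^\top\y < 0$ so $f_{\cA_j}$ strictly decreases initially and stays below $f_{\min}$ for small $s$; for $j \in \cC(\x_0,\y)$, \eqref{uniqmin4} means $f_{\cA_j}(\x_0 + s\y)$ is constant in $s$ (each exponential $\exp(\ba^\top(\x_0+s\y)) = \exp(\ba^\top\x_0)$), hence stays equal to $f_{\min}$; and for $j \notin \cB(\x_0)$, $f_{\cA_j}(\x_0) < f_{\min}$ so by continuity it stays below $f_{\min}$ for small $s$. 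Thus $f(\x_0 + s\y) = f_{\min}$ and the segment lies in $\rK_{\min}(f)$, so $\dim \rK_{\min}(f) \ge 1$.

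\textbf{Step 3: The precise dimension statement.} Finally I would argue that the set $\rK(\x_0) := \{\0\} \cup \{\y \ne \0 : \text{(b) holds}\}$ is a convex cone: closure under positive scaling is immediate from the linearity of all conditions in $\y$, and closure under addition follows because \eqref{uniqmin3} is preserved under sums, while for a sum $\y = \y_1 + \y_2$ one checks $\cC(\x_0,\y) = \cC(\x_0,\y_1) \cap \cC(\x_0,\y_2)$ (no cancellation, since $\nabla f_{\cA_j}(\x_0)^\top \y_i \le 0$ for each $i$), and \eqref{uniqmin4} for $\y_1, \y_2$ on that intersection gives it for $\y$. Then by the Step 2 argument, $\x_0 + \rK(\x_0)$ (intersected with a small ball) lies in $\rK_{\min}(f)$, and conversely every direction from $\x_0$ into $\rK_{\min}(f)$ lies in $\rK(\x_0)$, so locally $\rK_{\min}(f)$ near $\x_0$ coincides with $\x_0 + \rK(\x_0)$; since $\rK_{\min}(f)$ is convex, $\dim \rK_{\min}(f) = \dim \rK(\x_0)$, and this holds for every $\x_0 \in \rK_{\min}(f)$.

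\textbf{Main obstacle.} The delicate point is the implication in Step 2 that $f_{\cA_j}$ being constant along a segment forces $\ba^\top\y = 0$ for all $\ba \in \cA_j$, i.e. linking the second-order degeneracy of the log-Laplace transform to a combinatorial/affine condition on the support $\cA_j$. This is exactly the covariance-matrix interpretation of the Hessian flagged before Lemma~\ref{nsconv} (and is the ``only if'' direction of Lemma~\ref{nsconv} applied to the direction $\y$); I would make sure to state it as: $\y^\top \nabla^2 f_{\cA_j}(\x)\, \y = 0$ at one point (equivalently on the segment) iff $\ba \mapsto \ba^\top \y$ is constant on $\cA_j$. Getting the bookkeeping of active sets $\cB(\x_0)$ versus $\cC(\x_0,\y)$ exactly right — in particular that moving a small amount in direction $\y$ does not activate any new index and keeps the previously active non-$\cC$ indices strictly below $f_{\min}$ — is the other place where care is needed, but it is routine once the continuity and strict-monotonicity observations are made precise.
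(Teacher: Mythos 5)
Your proof is correct and follows essentially the same route as the paper: first-order optimality via directional derivatives for the characterization by \eqref{uniqmin2}, the small-segment construction distinguishing $\cB(\x_0)\setminus\cC(\x_0,\y)$, $\cC(\x_0,\y)$ and inactive indices for (b)$\Rightarrow$(a), the degeneracy of the log-Laplace transform along $\y$ (your covariance-Hessian computation is exactly the one-dimensional content of Lemma~\ref{nsconv}, which the paper invokes) to get \eqref{uniqmin4}, and the cone of directions $\rK(\x_0)$ generated by $\rK_{\min}(f)-\x_0$ for the dimension count. Your explicit verification that $\rK(\x_0)$ is a convex cone is a small addition the paper leaves implicit, but it does not change the argument.
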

\begin{proof}
Assume that $\x_0\in K_{\min}(f)$.   Suppose to the contrary that there exists $\y\in\R^n$ that satisfies \eqref{uniqmin2}. Clearly, $\y\ne \0$.  Take the directional derivative of each $f_{j},j\in\cB(\x_0)$ at $\x_0$ in the direction of $\y$ to deduce that $\x_0\not\in\rK_{\min}(f)$, which contradicts our assumption.  

Assume that the system of inequalities  \eqref{uniqmin2} is unsolvable.  We claim that $\x_0\in \rK_{\min}(f)$.  Assume to the contrary that there exists $\x_1\in \R^n$ such that 
$f(\x_1)<f(\x_0)$.   In particular, 
\begin{equation*}
\tilde f(\x_1):=\max\{f_{j},(\x), j\in\cB(\x_0)\}\le f(\x_1)<f(\x_0).
\end{equation*}
Set $\y=\x_1-\x_0$.    The assumption that \eqref{uniqmin2} is unsolvable means that
there exists $j\in \cB(\x_0)$ such that $\nabla f_{j}(\x_0)^\top \y\ge 0$.
Consider the convex function $g(t)=f_{j}(\x_0+t\y)$.  Our assumption yields that $g'(0)\ge 0$.  Since $g(t)$ is smooth and convex we deduce that  $g'(t)\ge 0$ for $t\ge 0$.  Hence $f_{j}(\x_1)=g(1)\ge g(0)=f_{j}(\x_0)=f(\x_0),$ which contradicts our assumption that $f(\x_1)<f(\x_0)$.

Assume the assumption (b) of the Lemma.
Then there exists $\x_0\in\rK_{\min}(\y)$ and $\y\ne \0$ which satisfy\eqref{uniqmin3}  and \eqref{uniqmin4}.   Let $g_j(t)=f_{j}(\x_0+t\y)$ for $j\in [N]$.  Assume that
$j\in \cB(\x_0)\setminus \cC(\x_0,\y)$.  Then $h_j'(0)<0$.  Therefore, there exists $r>0$ such that $h_j(t)\le h_j(0)=f(\x_0)$ for $t\in[0,r]$.   Assume that $j\not\in \cB(\x_0)$.
As $f_{j}(\x_0)<f(\x_0)$ for  we can assume that $r>0$ is small enough so that $h_j(t)\le f(\x_0)$ for $t\in[0,r]$.   The assumption that the system \eqref{uniqmin2} is unsolvable implies that $\cC(\x_0,\y)$ is nonempty.
The assumption \eqref{uniqmin4} means that $h_j(t)= f(\x_0)$ for $t\in\R$ and $j\in\cC(\x_0,\y)$.  Hence $f(\x)=f(\x_0)$ for $\x\in [\x_0, \x_0+r\y]$,  and $\dim K_{\min}\ge 1$.

To show part (a) of the lemma, it is enough to prove the last claim of the lemma.
Assume now that $\dim\rK_{\min}(f)=r\ge 1$.  Let $\x_0\in\rK_{\min}(f)$ let $\rK(\x_0)$ be  the cone generated by $\z-\x_0$ for  $\z\in \rK_{\min}(f)\setminus\{\x_0\}$.   Clearly,  $\y \in\rK(\x_0)\setminus\{\0\}$ is of the form $s(\z-\x_0)$ for some $s>0$ and $\z \in\\rK_{\min}(f)$.  Without loss of generality we can consider all $\y\ne \0$ of the above form where $s=1$.   It is straightforward to show that $\y$ satisfies \eqref{uniqmin3}.
Assume that $j\in\cC(\x_0,\y)$.  Let $h_j(t)=f_{j}(\x_0+t\y)$.  As $h_j(t)\le h_j(0)=f(\x_0)$, and $h_j'(0)=0$ we deduce that $h_j(t)=h_j(0)$ for $t\in[0,1]$.  Thus $h_j(t)$ is not strictly convex.  Apply Lemma \ref{nsconv}  to deduce \eqref{uniqmin4}.

Vice versa,  assume that for $\x_0\in\rK_{\min}(f)$ the set of 
$\y\in \R^n\setminus\{ \0\}$ satisfying the condition (b) together with $\0$ is a cone $\rK(\x_0)\subseteq\R^n$ of dimension $d$.  Then the arguments in the beginning of proof of this lemma show that $\dim \rK_{\min}(f)\ge d$.  If $\dim \rK_{\min}(f)> d$ we will deduce that $\dim \rK(\x_0)>d$, which contradicts our assumptions.
\end{proof}

It is easy to give an example showing that $\dim K_{\min}(f)$ can take any value in $\{0,\ldots,n\}$ by taking each function
$f_{\cA_j}$ to be affine, i.e.  $\cA_j=\{\ba_j\}$ for $j\in[N]$.   In this case
$K_{\min}(f)$ is a polyhedron.
For example, 
\begin{equation}\label{exampKmin}
\begin{aligned}
&f_{1}=0,  f_{2i}=-x_i-b_i,  f_{2i+1}=x_i-c_i,  b_i\le c_i, i\in[n]\\ &f(\x)=\max(0,-x_1-b_1,\ldots,x_n-c_n) \textrm{ for } \x=(x_1,\ldots,x_n)^\top\in\R^n,\\
&\rK_{\min}(f)=[b_1,c_1]\times\cdots\times[b_n,c_n].
\end{aligned}
\end{equation}
Note however that $K_{\min}(f)$ need not be a polyhedral set. To see this,
let
\begin{equation*}
f_{1}=0, f_{2}=-1-x_1,f_{3}=-1-x_2,  f_{4}=\log(\exp(x_1)+\exp(x_2).
\end{equation*}
Observe that $f$ is coercive.  Furthermore,
\begin{equation*}
K_{\min}(f)=\{\bx\in \R^2\mid \exp(x_1)+\exp(x_2)\leq 1, -1\le  x_1,-1\le x_2\}.
\end{equation*}

Observe that $K_{\min}(f)$ is a compact convex set whose boundary is included in the curve $f_{\cA_i}=e^{f_{\min}}$, showing that the geometry of $K_{\min}$ can be non-trivial.
\section{A bound on the set of minimizers $\rK_{\min}(f)$ for a coercive $f$}\label{sec:boundKmin}
\begin{lemma}\label{boundKmin}  Assume that $f$ is coercive.   Let $t_{\min}(f)$and $\nu(\cA)$ be defined by \eqref{defnu(A)}.   Suppose that
\begin{equation}\label{defR}
R\ge\frac{f(\0)- t_{\min}(f)}{\nu(\cA)}.
\end{equation}
Then $K_{\min}(f)\subseteq \rB_{\infty}(\0,R)$.
\end{lemma}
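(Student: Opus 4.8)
The plan is to show that $f(\x) > f(\0)$ for every $\x \notin \rB_\infty(\0,R)$, from which the inclusion $\rK_{\min}(f) \subseteq \rB_\infty(\0,R)$ follows immediately, since at any minimizer $\x_0$ we have $f(\x_0) \le f(\0)$. This is essentially a repackaging of the estimate already extracted in Lemma \ref{coerclemma}(e).

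First I would recall that, since $f$ is coercive, part (e) of Lemma \ref{coerclemma} applies: $\nu := \nu(\cA_1,\ldots,\cA_N) > 0$ and the bound
\begin{equation*}
f(\x) \ge t_{\min}(f) + \nu \,\|\x\|_\infty
\end{equation*}
holds for all $\x \in \R^n$. Note in particular that the right-hand side is finite and $\nu>0$, so dividing by $\nu$ below is legitimate. Now let $\x_0 \in \rK_{\min}(f)$; by Lemma \ref{minplemma1} such a point exists, and by definition of the minimum $f(\x_0) \le f(\0)$. Combining this with the displayed inequality at $\x = \x_0$ gives
\begin{equation*}
t_{\min}(f) + \nu\,\|\x_0\|_\infty \le f(\x_0) \le f(\0),
\end{equation*}
hence $\|\x_0\|_\infty \le \big(f(\0) - t_{\min}(f)\big)/\nu \le R$, where the last step uses the choice of $R$ in \eqref{defR}. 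Thus $\x_0 \in \rB_\infty(\0,R)$, and since $\x_0$ was arbitrary, $\rK_{\min}(f) \subseteq \rB_\infty(\0,R)$.

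There is no real obstacle here; the only thing to be careful about is that $f(\0) - t_{\min}(f) \ge 0$ so that $R \ge 0$ makes sense as a radius — but this is automatic, since $f(\0) = \max_j f_{\cA_j}(\0) = \max_j \log \sum_{\ba \in \cA_j} f_{\ba,\cA_j} \ge \max_j \log f_{\ba,\cA_j} \ge t_{\min}(f)$ (each summand being positive). I would insert this one-line remark for completeness and otherwise present the proof as the three-line chain of inequalities above.
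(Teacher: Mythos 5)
Your argument is correct and is essentially the paper's own proof: both rest on the coercivity estimate $f(\x)\ge t_{\min}(f)+\nu(\cA_1,\ldots,\cA_N)\|\x\|_\infty$ from Lemma \ref{coerclemma}(e) combined with $f(\x_0)\le f(\0)$ at a minimizer, the paper phrasing it contrapositively ($f(\x)>f(\0)$ when $\|\x\|_\infty>R$) while you bound $\|\x_0\|_\infty$ directly. Your added remark that $f(\0)\ge t_{\min}(f)$ is a harmless extra check; no gap.
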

\begin{proof} Clearly,  $\min\{f(\x),\x\in\R^n\}\le f(\0)$.  Use \eqref{nucond} to deduce that $f(\x)> f(\0)$ for $\|\x\|_{\infty}> R$.  Hence,  $K_{\min}(f)\subseteq \rB_{\infty}(0,R)$.
\end{proof}

Owing to~\eqref{defR}, it is desirable to control
$\nu(\cA)$. We next show that $\nu(\cA)$
can be computed by solving $2n$ linear programs.
\begin{lemma}\label{nuchar}    Assume that the condition \eqref{coercon} holds.
For $i\in[n]$ define 
\begin{equation}\label{defnui}
\begin{aligned}
&\nu_{i,+}(\cA)=\min\{t,  \ba^\top \x- t\le 0,  \ba\in\cA, \\
&\x=(x_1,\ldots,x_n)^\top\in\R^n, x_i=1, -1\le x_j\le 1, j\in[n]\setminus\{i\}\},\\
&\nu_{i,-}(\cA)=\min\{t,  \ba^\top \x- t\le 0,  \ba\in\cA, \\
&\x=(x_1,\ldots,x_n)^\top\in\R^n, x_i=-1, -1\le x_j\le 1, j\in[n]\setminus\{i\}\},
\end{aligned}
\end{equation}
Then 
\begin{equation}\label{nuchar1} 
\nu(\cA)= \min_{i\in[n]} \min(\nu_{i,+}(\cA),\nu_{i,-}(\cA)).
\end{equation}
\end{lemma}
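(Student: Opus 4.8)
The plan is to prove the identity \eqref{nuchar1} by showing that both sides equal the same quantity via the definition of $\nu(\cA_1,\ldots,\cA_N)$ in \eqref{defnu(A)}, namely $\min_{\|\x\|_\infty=1}\max\{\ba^\top\x : \ba\in\cA_j, j\in[N]\}$. First I would observe that the feasible set $\{\x : \|\x\|_\infty=1\}$ is the union over $i\in[n]$ of the $2n$ pieces $\{x_i=1,\ \|\x\|_\infty\le 1\}$ and $\{x_i=-1,\ \|\x\|_\infty\le 1\}$, since having sup-norm exactly $1$ means at least one coordinate is $\pm 1$ while the others lie in $[-1,1]$. Hence $\nu(\cA_1,\ldots,\cA_N)$ is the minimum over these $2n$ pieces of $\min_{\x\text{ in the piece}}\max_{\ba}\ba^\top\x$, and it remains to identify each piecewise minimum with the corresponding linear program $\nu_{i,+}$ or $\nu_{i,-}$.

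Next I would handle the epigraph reformulation: for a fixed piece, say $x_i=1$ and $-1\le x_j\le 1$ for $j\ne i$, the quantity $\min_{\x}\max_{\ba}\ba^\top\x$ equals $\min\{t : t\ge \ba^\top\x\ \text{for all }\ba\in\cA_j,\ j\in[N],\ \x\ \text{in the piece}\}$, which is exactly the linear program defining $\nu_{i,+}(\cA_1,\ldots,\cA_N)$ in \eqref{defnui}. This is the standard trick of replacing a pointwise maximum inside a minimization by an auxiliary scalar variable $t$ bounded below by each linear form; the equivalence is immediate once one checks that at optimum $t$ is driven down to $\max_\ba\ba^\top\x$. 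The same reasoning applies verbatim to the piece $x_i=-1$, giving $\nu_{i,-}$. Taking the minimum over all $2n$ pieces then yields \eqref{nuchar1}.

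One small technical point I would address is well-posedness: under the coercivity hypothesis \eqref{coercon}, part (e) of Lemma \ref{coerclemma} guarantees $\nu(\cA_1,\ldots,\cA_N)>0$, so in particular each of the $2n$ linear programs is bounded below (indeed by a positive constant) and, being a minimization of a linear objective over a nonempty compact polytope, attains its optimum; thus all the $\nu_{i,\pm}$ are finite and the right-hand side of \eqref{nuchar1} is a genuine minimum of $2n$ real numbers. I would also note that each piece is nonempty (the point with the prescribed $\pm1$ coordinate and all others zero lies in it), so no degenerate infeasible linear program arises.

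I do not anticipate a serious obstacle here: the proof is essentially a bookkeeping argument combining the decomposition of the $\ell_\infty$-sphere into $2n$ facets of the cube with the epigraph reformulation of a minimax. The only place requiring a moment's care is making sure the decomposition of $\{\|\x\|_\infty=1\}$ into the $2n$ (overlapping) pieces is exhaustive — which it is, because $\|\x\|_\infty=1$ forces some coordinate to equal $+1$ or $-1$ — and that enlarging each facet's constraint from $x_i=\pm1,\ \|\x\|_\infty= 1$ to $x_i=\pm1,\ -1\le x_j\le 1$ does not change the minimum, which holds because the extra points still satisfy $\|\x\|_\infty=1$ (since $|x_i|=1$) and so were already in the original feasible set. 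Hence the two formulations describe exactly the same set, and \eqref{nuchar1} follows.
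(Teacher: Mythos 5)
Your proof is correct and follows essentially the same route as the paper: the paper likewise identifies each $\nu_{i,\pm}$ with the minimax over the corresponding cube facet via the epigraph variable $t$, proves $\nu_{i,\pm}\ge\nu$ by feasible-set containment, and closes the argument by noting that an optimizer $\x^\star$ of $\nu$ has some coordinate equal to $\pm1$, which is exactly your facet decomposition phrased as two inequalities. The only slip is cosmetic: the feasible region of the LP in the variables $(\x,t)$ is unbounded in $t$, so attainment should be argued from compactness of the $\x$-facet and continuity of $\x\mapsto\max_{\ba}\ba^\top\x$ rather than from compactness of the LP polytope, but this does not affect the argument.
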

\begin{proof} 
Observe that
\begin{equation*}
\nu_{i,+}(\cA)=\min_{x_i=1, -1\le  x_j\le 1, j\ne i}\max_{\ba\in \cA}(\ba^\top \x)\}.
\end{equation*}
We next observe that $\nu_{i,+}(\cA)\ge \nu(\cA)$.
Indeed, the minimum in the characterization $\nu_{i,+}(\cA)$ is stated for all $\|\x\|_{\infty}=1$.   Similarly,  $\nu_{i,-}(\cA)\ge \nu(\cA)$.   Hence the left hand side of \eqref{nuchar1} is not bigger than the right hand side of  \eqref{nuchar1}.

Assume that  the minimum $\nu(\cA)$ is achieved at $\x^{\star}=(x_1^{\star},\dots,x_n^\star)^\top$.   Observe that $\x_i^\star\in\{-1,1\}$ for some $i\in[n]$.  Hence,  $\nu(\cA)\in\{\nu_{i,-}(\cA),\nu_{i,+}(\cA)\}$.
\end{proof}

The previous characterizations of $\nu_{i,-}(\cA)$ and $\nu_{i,+}(\cA)$ involve the solutions to linear programming problems.
Therefore, the value $\nu(\cA)$ can be computed in polynomial time.
Furthermore, we next give a simple explicit bound for $\nu(\cA)$,
supposing that the vectors of $\cA$ are integers.
\begin{lemma}\label{nulowest} Assume that $\cA\subset \Z^n$.  Suppose that the
  coerciveness condition \eqref{coercon} holds.   Let $|\cA|=M(\ge n+1)$,
and assume that 
\begin{equation}\label{unAjar}
\cA=\{\bb_1,\ldots,\bb_M\}, \quad\|\bb_1\|_2^2\ge \cdots\ge \|\bb_M\|_2^2. 
\end{equation}
Let $\nu(\cA)$ be defined by \eqref{defnu(A)}.
Then
\begin{equation}\label{nulowest1} 
\begin{aligned}
\nu(\cA)\ge \frac{1}{\prod_{i=1}^n \sqrt{\|\bb_i\|^2 +1}}.
\end{aligned}
\end{equation}
\end{lemma}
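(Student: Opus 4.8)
The plan is to reduce the estimate to a statement about when a certain system of linear inequalities has a nonzero solution, and then apply a quantitative ``distance to a face'' bound via Cramer's rule. Concretely, recall that $\nu(\cA_1,\ldots,\cA_N)=\min_{\|\x\|_\infty=1}\max_{\bb\in\cup_j\cA_j}\bb^\top\x$, and by the coerciveness condition \eqref{coercon} (equivalently (d) of Lemma \ref{coerclemma}, so $\0$ is an interior point of the polytope $\rK$ spanned by $\cup_j\cA_j$) we know $\nu(\cA_1,\ldots,\cA_N)>0$. Let $\x^\star$ with $\|\x^\star\|_\infty=1$ attain the minimum, and set $\nu:=\nu(\cA_1,\ldots,\cA_N)$. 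So $\bb_k^\top\x^\star\le\nu$ for all $k\in[M]$, with equality for at least one index; I would first argue, by an active-set / vertex argument on the polyhedron $\{\x:\ \bb_k^\top\x\le\nu\ \forall k\}\cap\{\|\x\|_\infty\le 1\}$, that one can take $\x^\star$ to be a vertex of this polyhedron — hence $\x^\star$ is the unique solution of a nonsingular $n\times n$ linear system whose rows come either from the constraints $\bb_k^\top\x=\nu$ or from the box constraints $x_i=\pm1$.

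From that system, Cramer's rule expresses each coordinate $x_i^\star$ (and, after moving $\nu$ around, $\nu$ itself) as a ratio of determinants of integer-or-$\pm1$ entried matrices. Since $\|\x^\star\|_\infty=1$, some coordinate equals $\pm1$, so the corresponding Cramer ratio has absolute value $1$; clearing denominators shows that $\nu$ times a subdeterminant equals an integer combination, and in particular $\nu\ge 1/|\det B|$ for the relevant square submatrix $B$ whose rows are among $\bb_1,\ldots,\bb_M$ (rows coming from box constraints $x_i=\pm1$ only shrink the matrix, effectively deleting a coordinate, so the worst case uses genuine rows $\bb_k$). Then I would bound $|\det B|$ by Hadamard's inequality: if $B$ uses rows $\bb_{k_1},\ldots,\bb_{k_r}$ with $r\le n$ then $|\det B|\le\prod_\ell\|\bb_{k_\ell}\|_2\le\prod_{i=1}^n\|\bb_i\|_2$ after ordering the norms decreasingly as in \eqref{unAjar}. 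The ``$+1$'' inside the square roots in \eqref{nulowest1} comes from handling the mixed case cleanly: when some of the $n$ rows of the defining system are box rows $e_i^\top\x=\pm1$, one enlarges $B$ to the full $n\times n$ matrix by appending those standard basis rows, and Hadamard applied to that matrix gives $\prod_i\sqrt{\|\bb_i\|_2^2+1}$ as a uniform upper bound (bounding the norm of any row, whether a genuine $\bb_i$ or a unit vector, by $\sqrt{\|\bb_i\|_2^2+1}$ under the decreasing ordering). Hence $\nu\ge 1/\prod_{i=1}^n\sqrt{\|\bb_i\|_2^2+1}=1/\omega(\cA_1,\ldots,\cA_N)$.

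The step I expect to be the main obstacle is the bookkeeping in the mixed active-set case: showing rigorously that the attaining $\x^\star$ can be taken to be a vertex determined by exactly $n$ tight constraints, that at least one of the tight constraints other than the box is of the form $\bb_k^\top\x=\nu$ (so that $\nu$ actually appears and can be solved for), and that appending unit-vector rows to form the $n\times n$ system is always possible and only helps the Hadamard bound. One has to be slightly careful that $\nu$ enters the linear system as the unknown being solved for, which requires treating $(\x^\star,\nu)\in\R^{n+1}$ as the vertex of the lifted polyhedron $\{(\x,t):\bb_k^\top\x\le t,\ \|\x\|_\infty\le1\}$ cut by $t=\nu$; there $n+1$ constraints are tight, at least one being a $\bb_k$-row, and Cramer/Hadamard then yield the claimed denominator. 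The rest — convexity, Hadamard, and the ordering of norms — is routine.
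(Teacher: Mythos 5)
Your core idea is the same as the paper's: pin down an optimal $\x^\star$ (and the value $\nu$) as the solution of a square linear system with integer data coming from the active constraints, solve for $\nu$ by Cramer's rule, use integrality to make the numerator at least $1$, and bound the denominator by Hadamard with rows of norm $\sqrt{\|\bb_k\|_2^2+1}$. The difference is organizational: the paper fixes the coordinates of $\x^\star$ equal to $\pm1$, chooses $\x^\star$ extremal (fewest non-$\pm1$ coordinates, then maximal active set $\cI$), and shows by a perturbation argument that the active constraints $\bc_k^\top\y-t=d_k$, $k\in\cI$, determine $(\y,t^\star)$ uniquely, yielding an $(l+1)\times(l+1)$ system with $l+1\le n$ rows all of the form $(\bc_k^\top,-1)$; you instead invoke standard LP/basic-solution machinery on a lifted polyhedron. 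That route can be made to work (and is arguably cleaner, e.g.\ by fixing a face $x_i=\pm1$ as in Lemma \ref{nuchar} and taking an optimal basic solution of that LP), but as written your plan has two concrete holes.

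First, your unlifted version does not determine $\nu$: with the $n\times n$ system $A\x^\star=\nu u+v$ ($u$ the indicator of $\bb$-rows, $v$ the $\pm1$ right-hand sides of box rows), the trick ``some coordinate equals $\pm1$'' fails exactly when that coordinate's box constraint is already a row of $A$ (then the relevant Cramer coefficient of $\nu$ is a determinant with a zero row and you get $0=0$); so the lifted formulation is not a refinement but a necessity, and the existence of a minimizer that is a vertex of the lifted polyhedron (with $n+1$ linearly independent tight rows among the integer-data constraints, not including the cut $t=\nu$) itself needs an argument — this is precisely what the paper's extremal choice/perturbation step supplies. Second, and this is the step where your stated bound does not follow: in the lifted $(n+1)\times(n+1)$ system nothing in your plan prevents all $n+1$ basis rows from being $\bb$-rows $(\bb_k^\top,-1)$, in which case Hadamard gives a product of $n+1$ factors $\sqrt{\|\bb_i\|_2^2+1}$, strictly weaker than $\omega(\cA_1,\ldots,\cA_N)$, which has only $n$ factors. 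You must use that $\|\x^\star\|_\infty=1$ forces some box constraint $(\pm\be_i^\top,0)$ to be tight and that any tight row can be exchanged into a basis of the tight rows; then at most $n$ rows are $\bb$-rows and the box rows contribute factor $1$, recovering \eqref{nulowest1}. (The paper sidesteps this automatically: eliminating the $\pm1$ coordinates caps its system at size $l+1\le n$.) With those two points supplied — plus the observation $\nu>0$ from \eqref{coercon}, which you correctly use to get a nonzero integer numerator — your argument is complete and equivalent to the paper's.
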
\begin{proof}  Assume that 
\begin{equation*}
t^{\star}=\nu(\cA)=\max_{\bb_k, k\in[M]} \bb_j^\top \x^\star, \x^{\star}=(
x_1^{\star},\cdots,x_n^{\star})^\top, \|\x^\star\|_{\infty}=1.
\end{equation*}
We choose $\x^{\star}$ with the maximum coordinates equal to $\pm 1$.  
Recall that $t^{\star}>0$.  Suppose that $\x^{\star}\in\{-1,1\}^n$.  Then $t^\star$ is a positive integer, hence $t^\star\ge 1$.  Clearly, the inequality of \eqref{nulowest1} holds.
We now assume that $\x^{\star}$ has a minimum number of coordinates $l\in[n-1]$ which are not equal to $\pm 1$.  Among all such $\x^{\star}$ we choose one for which
the set $\cI=\{k\in[M], \bb_k^\top \x^\star=t^{\star}\}$ is maximal.  Let $\y\in\R^l$ be the vector whose coordinates are all coordinates of $\x^\star$ that are not equal to $\pm 1$.  Then $\z=(\y^\top,t^\star)^\top \in\R^{l+1}$ are satisfying $|\cI|$ linear equations of the form $\bc_k^\top \y-t^{\star}=d_k$ for $k\in \cI$.   We claim that the system $\bc_k^\top \w-t=d_k$ for $k\in \cI$ has a unique solution $(\y^\top,t^\star)^\top$.  Otherwise there exists a nontrivial solution $(\w,t)\in \R^{l+1}$ satisfying $\bc_k^\top \w-t=0$ for $k\in\cI$.  If $t\ne 0$ it is straightforward to deduce that $t^\star>\nu(\cA)$, which contradicts our assumption. 
Assume that $t=0$.  As $\w\ne 0$ we deduce that either $l$ is not minimal or $\cI$ is not maximal, contrary to our assumptions.  Hence the system $\bc_k^\top \w-t=d_k$ for $j\in \cI$ has a unique solution $(\y,t^\star)$.  Thus $|\cI|\ge l+1$ and we can choose $l+1$ equation from the system $\bc_k^\top \w-t=d_k$ which gives the unique solution $(\y,t^{\star})^\top$.  Next observe that $d_k$ are integers and $\bc_k$ are
are obtained for $\ba_k$ by deleting the coordinates of $\ba_k$ corresponding to the coordinates of $\x^{\star}$ equal or $\pm 1$.
Next we use Cramer's rule to determine the value of $t^{\star}$. Note the $t^{\star}>0$ is a quotient of two determinants with integer entries.  As $t^{\star}>0$ we can  take the absolute values of these two determinants.  Clearly the absolute value of the numerator is at least $1$.  The maximum value of the denominator is estimated from above by using the Hadamard determinant inequality on the rows of the denominator.
Recall that each row of the denominator is a vector of the form $(\bc_k^\top,-1)$.
Note that 
\begin{equation*}
\|(\bc_k^\top,-1)\|_2=\sqrt{\|\bc_k\|_2^2+1}\le \sqrt{\|\ba_k\|^2+1}.
\end{equation*}
Hence, the absolute value of the denominator is at most $\prod_{i=1}^n \sqrt{\|\bb_i\|^2 +1}$.  This proves the inequality in \eqref{nulowest1}.
\end{proof}
\section{Reduction to a minimization problem on a well conditioned set}
In order to apply the ellipsoid method to the minimization of $f$, we next
show that the latter problem is equivalent to the minimization of the
``altitude'' $t$ on a suitable compact convex section $K(f,R)$ of the epigraph of $f$. Furthermore, we show that this compact convex set is sandwitched bewteen
inner and outer balls with well controlled radii.
\begin{lemma}\label{propKfR}
  Assume that $R\ge 0$.   Let
\begin{equation*}
\begin{aligned}
&a(f)=\max\{\|\ba\|_1, \ba\in  \cA\},\qquad
t_{\max}(f,R)=f(\0)+(a(f)+1)R,  \quad \bar t(f,R)=f(\0)+\frac{(a(f)+1)R}{2},\\
&R_{\max} =(a(f)+1)R+f(\0)-t_{\min}(f),\\
&\rK(f,R)=\{(\x,t), \x\in\R^n, t\in\R, f(\x)\le t, \, \|\x\|_{\infty}\le R, t\le t_{\max}(f,R)\}.
\end{aligned}
\end{equation*}
Then $K(f,R)$ is a compact convex set satisfying
\begin{equation}\label{contcondK(f,R)}
B_{\infty}((\0, \bar t(f,R)),R/2)\,\subset\, K(f,R)\,\subset \, B_{\infty}((\0, \bar t(f,R)),R_{\max}).
\end{equation}
Assume that $f$ is coercive, and $R$ satisfies \eqref{defR}. 
Then
\begin{equation}\label{minfchar}
f_{\min}=\min\{t, (\x,t)\in \rK(f,R)\}.
\end{equation}
\end{lemma}
\begin{proof} Clearly, $K(f,R)$ is compact.  As $f$ is convex it follows that $K(f,R)$ is convex.   
  As $\exp(\ba^\top\x)\le 
\exp(\|\ba\|_1\|\x\|_{\infty})$ we deduce
\begin{equation*}
f(\x)\le f(\0) +\max\{\|\ba\|_1, \ba\in\cA\}\|\x\|_{\infty}.
\end{equation*}
Combine the above inequality with \eqref{nucond} to deduce
\begin{equation}\label{uplowfestR}
t_{\min}(f)\le f(\x)\le f(\0)+ a(f)R \textrm{ for } \|\x\|_{\infty}\le R.
\end{equation}
Assume that $\|\x\|_{\infty}\le R/2$.  The above inequality shows that $f(\x)\le f(\0)+a(f)R/2$.  Hence,
\begin{equation*}
\|\x\|_{\infty}\le R/2, f(\0)+a(f)R/2\le t\le t_{\max}(f,R)\Rightarrow (\x,t)\in \rK(f,R).
\end{equation*}
As 
\begin{equation*}
-R/2\le t-\bar t(f,R)\le (a(f)/2 +1)R
\end{equation*}
we deduce the first containment of  \eqref{contcondK(f,R)}.  
Assume next that $\|\x\|_\infty\le R$.  Then \eqref{uplowfestR} yields that $t_{\min}(f)\le f(\x)\le f(\0)+a(f)R$.
Thus, for $(\x,t)\in\rK(f,R)$ we have the inequalities
\begin{equation*}
-((a(f)+1)R/2 +f(\0)-t_{\min}(f))\le t-\bar t(f,R)\le (a(f)+1)R/2.
\end{equation*}
This yields that second containment of  \eqref{contcondK(f,R)}. 

We now show the characterization  \eqref{minfchar}.   The inequality  \eqref{uplowfestR} yields that for each $\x\in \rB_{\infty}(\0,R)$ the point $(\x,f(\x))\in \rK(f,R)$.
Clearly, for a fixed $\x\in \rB_{\infty}(\0,R)$ the minimum of $t$ in $K(f,R)$ is $f(\x)$.  
As $R$ satisfies \eqref{defR}, we deduce \eqref{minfchar}.  
\end{proof}
In what follows we assume the nontrivial case
\begin{equation}\label{ncase}
f(\0)>t_{\min}(f) (\Rightarrow  \lceil f(\0)\rceil- \lfloor t_{\min}(f)\rfloor >0).
\end{equation}
(Otherwise $f_{\min}=f(\0)=t_{\min}(f)$.)
In this case $R(f)\in\N$, and $R(f)$ satisfies the condition \eqref{defR}.

Recall that
\begin{equation}\label{normin}
\begin{aligned}
\|\x\|_\infty\le \|\x\|_2 \le \|\x\|_1,  \|\x\|_2\le \sqrt{n}\|\x\|_{\infty},  \|\x\|_1\le \sqrt{n}\|\x\|_2 \textrm{ for } \x\in\R^n,\\
\rB_1(\bc,r)\subset \rB_2(\bc,r)\subset \rB_{\infty}(\bc,r),\\
\rB_\infty(\bc,r)\subset \rB_2(\bc,\sqrt{n}r), \quad \rB_{2}(\bc,r)\subset \rB_1(\bc,\sqrt{n}r).
\end{aligned}
\end{equation}
\begin{lemma}\label{contlemma}  Assume that conditions \eqref{cAjfbacond} and 
\eqref{coercon} hold.  
Let
\begin{equation}\label{defR(f)R2}
\begin{aligned}
&R(f)=(\lceil f(\0)\rceil- \lfloor t_{\min}(f)\rfloor)\lceil\sqrt{\prod_{k=1}^n (\|\bb_k\|^2_2+1)}\,\rceil,\\
&R_{2}(f) =\lceil\sqrt{n+1}\rceil\big((a(f)+1)R(f)+\lceil f(\0)\rceil -\lfloor t_{\min}(f)\rfloor\big)
\end{aligned}
\end{equation}
Then
\begin{equation}\label{contcondK(f,R(f))}
\begin{aligned}
B_{2}((\0, t(f,R(f))),R(f)/2) \,\subset\, K(f,R(f))\,\subset\, B_{2}((\0, t(f,R(f))),R_2(f)),
\end{aligned}
\end{equation}
\begin{equation}\label{minfchar1}
f_{\min}=\min\{t, (\x,t)\in \rK(f,R(f))\}.
\end{equation}
\end{lemma}
\begin{proof} The arguments of the proof of Lemma \ref{propKfR} yield
\begin{equation}\label{K(f)contain}
\rB_{\infty}((\0,t(f)),R(f)/2)\subset \rK(f)\subset \rB_{\infty}((\0,t(f)),R_2(f)/(\lceil\sqrt{n}\rceil).
\end{equation}
Use the inequalities \eqref{normin} to deduce \eqref{contcondK(f,R(f))}.
Combine Lemma \ref{nulowest} with the proof of \eqref{minfchar} to deduce \eqref{minfchar1}.
\end{proof}

\section{Polynomial time computation of $f_{\min}$}
\label{sec:ptcom}
The main result of this section is:
\begin{theorem}\label{mainthm}
  Let $f$ be defined by \eqref{deff(x)fj}, and
  assume that the coercivity condition
\eqref{coercon} holds.
Assume that $\varepsilon$ is a given positive rational number.  Then one can compute a
rational vector $\x^\star$ such that $f(\x^\star)\leq f_{\min}+\varepsilon$
in polynomial time in $|\log\varepsilon|+\<f>$.
\end{theorem}
The proof of this theorem is based on the ellipsoid method \cite{GLS81}.
The main point in applying this result is to get
an a priori bound on the approximate
minimizers, and this will be done thanks to~\eqref{defR}.

We assume that $f$ is coercive, and recall
that
\begin{equation*}
 \cA_j\subset \Q^n, \,f_{\ba,j}\in\Qpp\textrm{ for }\ba \in\cA_j,   j\in[N].  
 \end{equation*}
Let $\delta$ denote the common denominator of all rational numbers in each coordinate of $\ba\in\cA_j$ for $j\in[N]$. Consider the change of variable $\bx  = \delta \bx'$, $g_j(\bx'):=f_j(\delta \x)$, $j\in[N]$ and $g:=\sup_{j\in[N]}g_j$. Then, $g_j$ has the same form as $f_j$, the set
$\cA_j$ being replaced by $\delta \cA_j$. 
Moreover, the size
$\<g>$ is polynomially bounded in the size $\<f>$,
and the size of $\varepsilon$-minimizers of $f$ is polynomially bounded
in the size of $\varepsilon$-minimizers of $g$.
So, we can assume
without loss of generality that
\begin{align}
\label{cAjfbacond}
\cA_j\subset \Z^n, \qquad \text{ for all } j\in[N] \enspace .
\end{align}

%

%
We now recall the main complexity result of the ellipsoid method \cite{GLS81}.
It relies on the following notion.
\begin{definition}\label{defwso}  Let $\rK\subset \R^n$ be a compact convex set in $\R^n$.
A {\it weak separation oracle} for $\rK$, taking an input number $\varepsilon\in \Qpp$ and $\y\in\Q^n$, and concludes one of the following:
\begin{enumerate}[label=(\roman*)]
\item Asserting that $\y$ is at Euclidean distance at most $\varepsilon$ from $\rK$.
\item Finding an {\it approximate separating half space of precision $\varepsilon$}, i.e. , 
a linear form $\phi(\x)=\bc^\top \x$, with $\bc\in\Q^n$,  and $\|\bc\|_2\ge 1$, such that for every $\x\in\rK$ the inequality $\phi(\x)\le \phi(\y)+\varepsilon$ holds. 
\end{enumerate}
\end{definition}

For $r\in\Q$ denote by $\langle r \rangle$ the number of bits needed to encode $r$,
under standard binary recording.
For instance,  if $r$ is a nonzero integer then $\langle r\rangle =\lceil \log_2 |r|\rceil +1$,  and if $r=p/q\in\Q$ then $\langle r\rangle=\langle p\rangle +\langle q\rangle$, if $\br=(r_1,\ldots,r_n)^\top\in\Q^n$ then $\langle \br\rangle=\sum_{i=1}^n \langle r_i\rangle$, and if $\psi(\x)=\br^\top \x$ a linear rational form on $\R^n$, then $\langle \psi\rangle=\langle \br\rangle$.
In what follows,  the length of an input refers to the binary coding.

The ellipsoid method can be used to find the minimum of a linear rational form $\psi$ on the compact convex set $\rK\subset\R^n$ within $\varepsilon$ precision.  This means finding $\x^\star\in\Q^n$ such that $\dist_2(\x^\star,\rK)\le \varepsilon$ and
$\psi(\x^\star)\le \min_{\x\in\rK} \psi(\x) +\varepsilon$.  (Here $\dist_2(\x^\star,\rK)=\min_{x\in\rK}\|\x^\star-\x\|_2$.)

A main assumption on $\rK$ is the containment 
\begin{equation}\label{contassum}
\rB_2(\bc, R_1)\subseteq \rK\subset\rB_2(\bc,R_2), \quad \bc\in\Q^n, R_1\le R_2, R_1,R_2\in\Qpp.
\end{equation}
In this case, the size of the input approximate minimization is measured by 
$\langle \psi\rangle +\langle \bc\rangle +\langle R_1\rangle +\langle R_2\rangle +\langle \varepsilon\rangle$.

It is shown in \cite[Theorem 3.1]{GLS81} that if a compact convex set $\rK$ satisfying \eqref{contassum} admits a polynomial time weak separation oracle, the ellipsoid method computes an $\varepsilon$-approximation of the minimization problem of a linear rational form in a polynomial time in the size of the input.

\begin{proposition}\label{weakseprop}
Assume that conditions \eqref{cAjfbacond} and 
\eqref{coercon} hold.    Let $\rK(f,R(f))$ be defined as in Lemma~\ref{propKfR}, where $R(f)$ is given by \eqref{defR(f)R2}.  Then $\rK(f,R(f))$ admits 
a weak separation oracle which for a precision $\varepsilon$ runs in polynomial time in $\<f>$ and $|\log\varepsilon|$.
\end{proposition}

To prove this proposition we recall the following
consequence of a result of Borwein and Borwein on the approximation
of the $\log$ and $\exp$ maps.
\begin{lemma}[See~{\cite{borwein}}]
  \label{approxexlog}
Let $\varepsilon\in\Qpp$ and $t\in\Q$.  
\begin{enumerate}[label=(\roman*)]
\item For $t\le 0$,  a rational $\varepsilon$-approximation of $\exp(t)$ can be computed in a polynomial time in $\langle t\rangle$ and $\langle \varepsilon\rangle$.
\item For $t>0$ a rational $\varepsilon$-approximation of $\log t$  can be computed in a polynomial time in $\langle t\rangle$ and $\langle \varepsilon\rangle$.
\end{enumerate}
\end{lemma}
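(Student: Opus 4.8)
The plan is to give elementary algorithms based on truncated power series with \emph{geometric} convergence, bounding in each case both the number of terms and the bit-length of the partial sums by a polynomial in $\langle t\rangle$ and $\langle\varepsilon\rangle$; one may alternatively invoke the fast routines of~\cite{borwein} for sharper complexity. Throughout I use that $\varepsilon\ge 2^{-\langle\varepsilon\rangle}$, which is immediate from the bit-length conventions recalled above, so that $\log_2(1/\varepsilon)\le\langle\varepsilon\rangle$.

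\emph{Part (i).} One may assume $\varepsilon<1$, since otherwise $\exp(t)\in(0,1]$ differs from $1$ by strictly less than $1\le\varepsilon$, so the output $1$ works. Write $t=-s$ with $s\ge0$. The crucial step --- and the reason the hypothesis $t\le0$ is present --- is a case split on the size of $s$, which may be exponential in $\langle t\rangle$. If $s>\langle\varepsilon\rangle$, then $s>\log_2(1/\varepsilon)>\ln(1/\varepsilon)$, hence $\exp(t)=\exp(-s)<\varepsilon$, and we output $0$; the test ``$s>\langle\varepsilon\rangle$'' is a comparison of a rational with an integer, hence polynomial-time. Otherwise $s\le\langle\varepsilon\rangle$, and we output the truncated Taylor sum $S_K=\sum_{k=0}^K(-s)^k/k!$ with $K:=\lceil 6s\rceil+\langle\varepsilon\rangle+2$. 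Since for $k\ge K+1>2s$ the terms of the exponential series at $-s$ decrease in absolute value by a factor at most $1/2$, the remainder obeys $|\exp(-s)-S_K|\le 2s^{K+1}/(K+1)!\le 2\bigl(es/(K+1)\bigr)^{K+1}\le 2\cdot 2^{-(K+1)}\le\varepsilon$, using $(K+1)!\ge\bigl((K+1)/e\bigr)^{K+1}$ and $K+1\ge\langle\varepsilon\rangle+3$. In this branch $K=O(\langle\varepsilon\rangle)$, and, writing $t=-a/b$, the common denominator of $S_K$ divides $b^K\,K!$, so $\langle S_K\rangle$ is polynomial in $\langle t\rangle$ and $\langle\varepsilon\rangle$; $S_K$ is computed term by term in polynomial time.

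\emph{Part (ii).} Writing $t=p/q$ with $p,q\in\N$ positive and $\langle p\rangle,\langle q\rangle\le\langle t\rangle$, we have $\log t=\log p-\log q$, so it suffices to approximate $\log n$, for a positive integer $n$, within $\varepsilon/2$ in time polynomial in $\langle n\rangle$ and $\langle\varepsilon\rangle$. If $n=1$ the answer is $0$; for $n\ge2$ let $m:=\lfloor\log_2 n\rfloor\le\langle n\rangle$ (read off from the binary expansion of $n$) and split $\log n=m\log 2+\log u$ with $u:=n/2^m\in[1,2)$. For the first summand I would use $\log 2=-\log\tfrac12=\sum_{k\ge1}\tfrac1{k2^k}$: the tail after $K_1$ terms is at most $2^{-K_1}$, so taking $K_1=O(\log\langle n\rangle+\langle\varepsilon\rangle)$ makes $m$ times the $K_1$-th partial sum approximate $m\log 2$ to within $\varepsilon/4$, with polynomially bounded bit-length (the common denominator divides $2^{K_1}\operatorname{lcm}(1,\dots,K_1)$). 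For the second summand I would apply the M\"obius substitution $y:=\tfrac{u-1}{u+1}=\tfrac{n-2^m}{n+2^m}\in[0,\tfrac13)$ and the identity $\log u=2\operatorname{artanh}y=2\sum_{k\ge0}\tfrac{y^{2k+1}}{2k+1}$; since $y^2<\tfrac19$, the tail after $K_2$ terms is at most $9^{-K_2}$, so $K_2=O(\langle\varepsilon\rangle)$ terms suffice for error $\varepsilon/4$, again with polynomially bounded bit-length (the common denominator divides $(n+2^m)^{2K_2+1}\operatorname{lcm}(1,3,\dots,2K_2+1)$). Adding the two approximations gives $\log n$ within $\varepsilon/2$, hence $\log t$ within $\varepsilon$ after subtracting the analogous approximation of $\log q$; the resulting rational may be rounded to $\langle\varepsilon\rangle+O(1)$ fractional bits to keep the output short.

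\emph{Where the difficulty lies.} The only genuinely delicate point is the size issue in (i): $|t|$ may be exponential in $\langle t\rangle$, so a naive Taylor expansion of $\exp$ about $0$ would need exponentially many terms, and for $t>0$ the value $\exp(t)$ could itself have exponentially many bits --- which is exactly why the statement restricts to $t\le0$. The hypothesis forces $\exp(t)\in(0,1]$, and the case split exploits that $\exp(t)$ is \emph{exponentially small} precisely in the regime where the series would be expensive, so that the trivial answer $0$ is already $\varepsilon$-accurate there. In (ii) no such pathology occurs since $|\log t|\le\langle t\rangle$ always; the only care required is to arrange geometric convergence via the range reduction $u\in[1,2)$ followed by the $\operatorname{artanh}$ substitution, which keeps both the number of terms and the partial sums of polynomial size.
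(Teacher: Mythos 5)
Your argument is correct, but it takes a different route from the paper: the paper offers no proof at all for this lemma, citing it as \cite[Lemma~26]{AGGG}, which in turn rests on the Borwein--Borwein complexity results for elementary functions \cite{borwein} (fast, FFT-based evaluation of $\exp$ and $\log$). You instead give a self-contained elementary proof via truncated power series with geometric tail decay. The substance of your argument is sound, and you correctly isolate the one genuinely delicate point, which is also the reason the lemma restricts to $t\le 0$: since $|t|$ can be exponential in $\langle t\rangle$, a naive Taylor expansion is useless, and your case split --- output $0$ when $s>\langle\varepsilon\rangle$ (where $\exp(-s)<2^{-\langle\varepsilon\rangle}\le\varepsilon$), and sum $O(\langle\varepsilon\rangle)$ terms otherwise --- is exactly the right fix; the tail and bit-length estimates in both branches check out. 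The range reduction $\log n=m\log 2+\log u$ with $u\in[1,2)$ followed by the $\operatorname{artanh}$ substitution likewise gives the required geometric convergence with polynomially bounded intermediate rationals. What the citation buys the paper is brevity and sharper asymptotic complexity; what your proof buys is transparency and independence from the external reference. Two cosmetic points: you should fold the final rounding error into the $\varepsilon/2$ budget explicitly (e.g.\ aim for $\varepsilon/4$ per summand and reserve $\varepsilon/4$ for rounding), and the parenthetical remark that $|\log t|\le\langle t\rangle$ is off by a constant factor of $\ln 2$, though nothing depends on it.
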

\begin{lemma}
  We can compute in polynomial time a $\varepsilon$ approximation of the value $f(\bx)$ of the function $f$ in~\eqref{deff(x)fj} at a point $\bx\in\Q^n$.
\end{lemma}
\begin{proof}
  To do so, we avoid the direct computation of the terms $\exp(\<\ba,\bx>)$.
Instead, we first compute, for all $i\in [N]$, 
\[
\bar{f}_i(\bx) = \max_{\ba \in\cA_i} \<\ba,\bx>
\]
and use the fact that
\[
f_i(\bx) = \bar{f}_i(\bx)+ \log \big (\sum_{\ba\in\cA_i} f_{\ba,i} \exp(\<\ba,\bx>-\bar{f}_i(\bx))\big)
\]
so that the exponential map is only evaluated at nonpositive points,
in accordance with the restriction of the domain in~\Cref{approxexlog}, (i).
\end{proof}
\begin{proof}[Proof of Proposition~\ref{weakseprop}]
   Assume that $\varepsilon\in \Qpp$ is given. For $\bc\in\R^{n+1}$ denote by $\phi_{\bc}$ the linear functional on $\R^{n+1}$ 
given by $\bc^\top\w$, for $\w\in\R^n$.
Let $\z=(\y^\top,s)^\top\in \Q^{n+1}$.

\begin{enumerate}[label=(\alph*),wide, labelwidth=!, labelindent=0pt]
\item
Suppose that $\|\y\|_{\infty} \ge R(f)$.  Assume that $|y_i| \ge R(f)$ for some $i\in[n]$.  Denote by $\sgn y_i\in\{-1,1\}$
the sign of $y_i$.  Denote by $\be_j\in\R^{n+1}$ a standard $j$-th basis element in $\R^{n+1}$,
whose $j-th$ coordinates is $1$ and all other coordinates are $0$ for $j\in[n+1]$.   Set $\bc=\sgn(y_i)\be_i$.  Note that $\|\bc\|_2=1$.   
Clearly, $\phi_{\bc}(\w)\le R(f)$ for $\w\in\rK(f,R(f))$, and $\phi_{\bc}( \z)\ge R(f)$.  Hence $\phi_{\bc}$ separates between $\rK(f,R(f))$ and $\z$.
\item
Suppose that $\|\y\|_{\infty}<R(f)$.  Assume first that $s\ge t_{\max}(f)$.  Clearly, $\phi_{\be_{n+1}}(\w)\le t_{\max}(f)$, and $\phi_{\be_{n+1}}(\z)\ge t_{\max}(f)$. Hence $\phi_{\be_{n+1}}$ separates between $\rK(f,R(f))$ and $\z$.

Assume second that $f(\y)-\varepsilon\le s<t_{\max}(f)$.  Choose $t\in[f(\y),t_{\max}(f)]$ which is closest to $s$.  Then $|s-t|\le \varepsilon$.  Set $\w=(\y^\top,t)^\top$.  Note that $\w\in \rK(f,R(f))$.  Clearly,  $\|\z-\w\|_2\le \varepsilon$.  Hence $\dist_2(\z,\rK(f,R(f)))\le \varepsilon$.

Assume third that $s<f(\y)-\varepsilon$.  We now need to find a weakly separating $\phi_{\bc}$,  where $\bc\in\Q^{n+1}, \|\bc\|_2\ge 1$, between $\rK(f,R(f))$ and $\z$.  Assume that $f(\y)=f_{k}(\y)$ for some $k\in[N]$. 
As $f_{k}$ is a smooth convex function it follows that 
\begin{equation*}
f_{k}(\x)\ge f_{k}(\y)+\nabla f_{k}(\y)^\top(\x-\y) \textrm{ for all } \x\in\R^n.
\end{equation*}
Hence, 
\begin{equation*}
\begin{aligned}
&f(\x)\ge f_{k}(\y)+\nabla f_{k}(\y)^\top(\x-\y) \textrm{ for all } \x\in\R^n \Rightarrow\\
&-f(\x)+\nabla f_{k}(\y)^\top\x\le -f(\y)+\nabla f_{k}(\y)^\top \y \textrm{ for all } \x\in\R^n.
\end{aligned}
\end{equation*}
Assume that $(\x,t)\in\rK(f,R(f))$.  Then,
\begin{equation*}
\begin{aligned}
&-t+\nabla f_{k}(\y)^\top\x\le -f(\x)+\nabla f_{k}(\y)^\top\x\le
-f(\y)+\nabla f_{k}(\y)^\top \y<-s+\nabla f_{k}(\y)^\top\y-\varepsilon.
\end{aligned}
\end{equation*}
Hence $\tilde \phi=\phi_{(\nabla f_{k}(\y)^\top, -1)^\top}$ separates $\rK(f,R(f))$ and $\z$.
We now need to make a good rational approximation of $\nabla f_{j}(\y)$.
Clearly, if $\nabla f_{j}(\y)=0$  then $\tilde \phi$ has rational coefficients.
It is left to discuss the case where $\nabla f_{j}(\y)\ne 0$.  Use Lemma \ref{approxexlog} to deduce that we can find in a polynomial time
in $\<f>$ and in $|\log\varepsilon|$ a vector $\be\in\Q^{n}$ such that  
\begin{equation*}
\|\nabla f_{j}(\y)-\be\|_1\le \frac{\varepsilon}{2R(f)}.
\end{equation*}
Hence,
\begin{equation*}
-t+\be^\top \x\le -s+\be^\top \y +(\be-\nabla f_{j}(\y))^\top(\x-\y)-\varepsilon.
\end{equation*}
Recall that 
\begin{equation*}
|(\be-\nabla f_{j}(\y))^\top(\x-\y)|\le \|\be-\nabla f_{j}(\y)\|_1\|\x-\y\|_\infty\le 
\frac{\varepsilon}{2R(f)} (2R(f)).
\end{equation*}
Let $\bc=(\be^\top,-1)^\top$.  Then $\phi_{\bc}$ separates between $\rK(f,R(f))$ and $\z$.
\end{enumerate}
\end{proof}

\begin{proof}[Proof of Theorem~\ref{mainthm}]
Use  Proposition \ref{weakseprop} and \cite[Theorem 3.1]{GLS81} to deduce the ellipsoid algorithm for the minimum of the linear functional $\phi_{\be_{n+1}}$ on $\rK(f,R(f))$ finds $\z^\star=((\x^\star)^\top,t^\star)^\top \in\Q^{n+1}$ such that $\dist_2(\z^{\star},\rK(f,R(f)))\le \varepsilon$, and $t^\star=\phi_{\be_{n+1}}(\z^\star)\le f_{\min}+\varepsilon$.  Assume that $\dist_2(\z^{\star},\rK(f,R(f)))=\|\z^\star-\w\|_2$ for some $\w=(\x^\top, t)^\top\in \rK(f,R(f))$.  Hence $|t^{\star}-t\|\le \varepsilon$.    Thus
\begin{equation*}
  t^\star\ge t-\varepsilon\ge f(\x)-\varepsilon\ge f_{\min}-\varepsilon.
  \qedhere
\end{equation*}
\end{proof}
\section{Interior-point methods for finding $f_{\min}$}\label{sec:intmethod}
In this section we will show that one can apply interior-point methods (IPM) for $f_{\min}$ given by \eqref{e-geo}.   The basic case $N=1$ in \eqref{deff(x)fj} was done in \cite{BLNW20}, and we point out briefly how to use their results for $N>1$.
We will mainly use the references \cite{NN94, Ren01}, 
\subsection{Preliminaries}
We first recall some notations and definitions that we will use in this section.
Let $g\in\rC^3(\rB_2(\x,r))$.   Denote 
\begin{equation*}
g_{,i_1\ldots i_d}(\x)=\frac{\partial ^d}{\partial x_{i_1}\ldots\partial x_{i_d}}g(\x), \quad i_1,\ldots,i_d\in[n], d\in[3].
\end{equation*}
Recall the Taylor expansion of $g$ at $\x$ of order $3$ for $\bu\in\R^n$ with a small norm:
\begin{equation*}
\begin{aligned}
&g(\x+\bu)\approx g(\x)+\partial g(\x)+\frac{1}{2}\bu^\top \partial^2 g(\x)\bu+\frac{1}{6}\langle\partial ^3  g(\x), {\otimes^ 3}\bu\rangle,\\
&\partial g(\x)=(g_{,1}(\x),\ldots,g_{,n}(\x))^\top,  \quad \partial^2 g(\x)=[g_{,ij}(\x)],  i,j\in[n],\\ 
&\partial^3 g(\x)=[g_{,ijk}(\x)], i,j,k\in[n], \, \langle\partial^3 g(\x),\otimes^ 3\bu\rangle=\sum_{i,j,k\in[n]}g_{,ijk}(\x)u_i u_j u_k, 
\end{aligned}
\end{equation*}
where $\partial f, \partial^2 f, \partial ^3 f$ are called
the gradient, the Hessian, and the 3-mode symmetric partial derivative tensor of $f$. 

For a matrix $A\in\R^{n\times n}$ denote by  $A^\dagger\in\R^{n\times n}$ the Moore-Penrose inverse of $A$ \cite[\textsection 4.12]{Frib}. Recall that if $A$ is invertible then $A^\dagger=A^{-1}$.
A set $\rD\subset \R^n$ is called a domain if $\rD$ is an open connected set.
\begin{definition}\label{defconcconst}  
Assume that $g: \rD\to\R$ is a convex function 
 defined in a convex domain $\rD\subset \R^n$,  and that $g\in\rC^3(\rD)$.   The function $g$ is called {\em $a (>0)$-self-concordant}, or simply {\em self-concordant}, if the following inequality hold
\begin{equation}\label{defconcconst1} 
|\langle \partial^3g(\x),\otimes^3\bu\rangle|\le 2a^{-1/2} (\bu^\top \partial^2g(\bx)\bu)^{3/2}, 
\textrm{ for all }\x\in \rD,\bu\in\R^n.
\end{equation}
The function $g$ is called {\em standard self-concordant} if $a=1$,  and {\em strongly $a$-self-concordant} if $g(\x_m)\to\infty$ whenever a sequence $\{\x_m\}$ converges to the boundary of $\rD$.

The {\em complexity value} $\theta(g)\in[0,\infty]$ of a-self-concordant function $f$ in $\rD$, called a self-concordance parameter (\cite[Definition 2.3.1]{NN94}), is
\begin{equation}\label{defsconc0}
\begin{aligned}
&\theta(g)=\sup_{\x\in\rD}
\inf\{\lambda^2\in[0,\infty], |\partial g(\x)^\top \bu|^2
\le \lambda^2 a\big(\bu^\top\partial^2 g(\x)\bu\big),\forall \bu\in\R^n\}.
\end{aligned}
\end{equation}
 A strongly 1-self-concordant function with a finite $\theta(g)$ is called a {\em barrier} (function).   A {\em $\theta$-self-concordant} barrier is a barrier that satisfies $\theta(g)\le \theta$.
 
 Assume that $\rD$ is a bounded domain,  $\x\in\rD$  and $\rL$ is a line  through $\x$. 
 Denote by $d_{\max}(\x,\rL)\ge d_{min}(\x,\rL)$ the two distances from $\x$ to the end points of $\rL\cap\rD$.   Then, we define $\operatorname{sym}(\x,\rD)$ as the infimum of $\frac{d_{\min}(\x,L)}{ d_{\max}(\x,L)}$ for all lines $\rL$ through $\x$. 
\end{definition}

Observe that Renegar \cite{Ren01} deals only with strongly standard self-concordant functions.  
Recall that the complexity value $\theta(f)$, coined in \cite{Ren01}, is referred to as the {\em parameter} of barrier $f$ in \cite{NN94}, and is deined only for self-concordant barriers in \cite[\textsection 2.3.1] {NN94}.   

A simple implementation of Newton's method for minimizing a linear functional over a convex bounded domain $\rD$, using a given barrier function $\beta$, is  a "short-step" IPM that  follows the central path \cite[\textsection 2.4.2]{Ren01}.
The number of iterations required to approximate the minimum of a linear functional to within $\varepsilon$ accuracy, starting with an initial point $\x'$, is given by \cite[Theorem 2.4.1]{Ren01}:
\begin{equation}\label{Renthm}
O\big(\sqrt{\theta(\beta)}\log\big(\frac{\theta(\beta)}{\varepsilon \operatorname{sym}(\x',\rD)}\big)\big).
\end{equation}

Corollary 2.4 \cite{Fri23} states:
\begin{lemma}\label{charthetf}  Let $\rD\subset\R^n$ be a convex domain and assume that $g$ is a nonconstant $a$-self-concordant function on $\rD$.    Suppose furthermore that
$\partial g(\x)^\top \ker \partial^2 g(\x)=\0$ for each $\x\in\rD$.   Then
\begin{equation}\label{defconcconst12}
 \theta(g)=a^{-1}\sup_{\x\in \rD}\partial g(\x)^\top (\partial^2g)^\dagger(\x)\partial g(\x).
\end{equation}
\end{lemma}

Observe that if $g$ an $a$-self-concordant function on a convex domain $D$ then $a^{-1}g$ is a standard self-concordant function in $D$, and $\theta(a^{-1}g)=\theta(g)$.   The following proposition is a consequence of \cite[Proposition 2.3.1]{NN94} and Lemma \ref{charthetf}:
\begin{proposition}\label{propbar}  Let $\rD\subset\R^n$ be a convex domain and $k\in\N$.  Assume that $g_i$ is a standard self-concordant function in $\rD$ with $\theta(g_i)<\infty$ for $i\in[k]$.   Then
\begin{enumerate}[label=(\alph*)]
\item Let $k=1$ and $\bA:\R^n\to\R^m$ be an onto affine transformation.
Then $\tilde g_1:=g_1\circ \bA$ is a standard self-concordant function in the convex domain $\bA(D)$, and $\theta(\tilde g_1)\le \theta(g_1)$.
\item The function $g=\sum_{i=1}^k g_i$ a standard self-concordant function in $D$, and $\theta(g)\le \sum_{i=1}^k \theta(g_i)$.
\end{enumerate}
\end{proposition}

Recall that the functions $-\log x$,  $-\log x-\log(\log x-y)$ and $-\log (R^2-\|\x\|_2^2)$ are  self-concordant barriers in $\Rpp$, $\{(x,y)\in\R^2, x>0, \log x>y\}$, and $\{\x\in\R^n,\|\x\|_2<R\}$ respectively, that satisfy 
\begin{equation}\label{thetexam}
\theta(-\log x)=1,   \theta(-\log x-\log(\log x-y))\le 2,   \theta(-\log(R^2 -\|\x\|_2^2))=1.
\end{equation} 
See \cite[Ex. 1, Sec 2.3; Prop.  5.3.3; Prop. 5.4.2]{NN94}.
\subsection{IPM to compute $f_{\min}$}\label{subsec:ipm}
In this subsection we assume that the conditions of Lemma \ref{contlemma} hold. 
Denote
\begin{equation}\label{defPin}
\Delta^n=\{\x=(x_1,\ldots,x_n)^\top\in\R^n,  \sum_{i=1}^n x_i=1\}, \quad \Deltapp^n= \Delta^{n}\cap\Rpp^n.
\end{equation}
We view $\Delta_n$ as an affine $n-1$ dimensional space.   For the affine space 
\begin{equation}\label{defSigma}
\Sigma:=\R^{n+1}\times \Delta^{m_1}\times \cdots \times \Delta^{m_N}
\end{equation}
 we denote by $\|\cdot\|_p$ the restriction of the $\ell_p$ norm on $\R^{n+1+\sum_{j=1}^N m_j}$.  Then by $B'_p(\w,r)\subset \Sigma$  we denote the restriction of the ball $B_p(\w,r)\subset \R^{n+1+\sum_{j=1}^N m_j}$ to $\Sigma$.
 
We now follow the arguments of \cite{BLNW20} to replace the set $K(f,R)\subset \R^{n+1}$, defined in Lemma~\ref{propKfR}, by a new set $\hat K(f,R)\subset \R^{M}$, defined in \eqref{defhatK(f,R)},  with a barrier $\beta$:
\begin{proposition}\label{defhatKf}  Let 
\begin{equation}\label{Ajdef}
\begin{aligned}
&\cA_j=\{\ba_{1j},\ldots,\ba_{m_jj}\}\subset \R^{n},\quad f_{\ba_{ij}} >0 \,\mathrm{for}\, i\in[m_j],
\quad f_j(\x)=\log\sum_{i=1}^{m_{ij}} f_{\ba_{ij}}\exp(\ba_{ij}^\top\x), \quad j\in[N],\\
&\mu=\min_{j\in[N]}\min_{i\in[m_j]}\frac{f_{\ba_{ij}}}{\sum_{i=1}^{m_j} f_{\ba_{ij}}}\enspace .
\end{aligned}
\end{equation}
Let the definitions of \Cref{propKfR} hold.   Suppose furthermore that 
\begin{equation}\label{condR}
R\ge\max\Big(\frac{f(\0)- t_{\min}(f)}{\nu(\cA)}, 1 +\log 4\Big).
\end{equation}
Denote by $\z_j=(z_{1j},\ldots,z_{m_jj})^\top$ the variables of $\Delta^{m_j}$ for $j\in[N]$.
Define
\begin{equation}\label{defhatK(f,R)}
\begin{aligned}
\hat K(f,R)=\{(\x,t,\z_1,\ldots,\z_N)\in K(f,R)\times \Deltapp^{m_1}\times \cdots \times \Deltapp^{m_N},\;
f_{\ba_{ij}}e^{\ba_{ij}^\top\x}\le z_{ij}e^t, i\in[m_j], j\in[N]\}.
\end{aligned}
\end{equation}
Then
\begin{enumerate}[label=(\alph*)]
\item The set $\hat K(f,R)$ is a compact convex set.
\item The projection of $\hat K(f,R)$ on the first $n+1$ coordinates is $K(f,R)$.
\item $f_{\min}=\min\{t,  (\x,t,\z_1,\ldots,\z_N)\in \hat K(f,R)\}$.
\item Let 
\begin{equation*}
\begin{aligned}
  \z_j(f)=\frac{1}{\sum_{i=1}^{m_j} f_{\ba_{ij}}}(f_{\ba_{1j}},\ldots,f_{\ba_{m_jj}})^\top, \quad j\in[N],\qquad
  \bw=(\0,\bar t(f,R), \z_1(f),\ldots,\z_N(f))^\top.
\end{aligned}
\end{equation*}
Then
\begin{equation}\label{contcondhatK(f,R(f))}
\begin{aligned}
B_{2}(\bw,\mu/2)\subset \hat K(f,R)\subset B_{2}\Big(\bw,\sqrt{n+1+\sum_{j=1}^N m_j}R_{\max}\Big).
\end{aligned}
\end{equation}
\item The function 
\begin{equation}\label{barhatK(f)}
\begin{aligned}
  \beta=-\sum_{j=1}^N\sum_{i=1}^{m_j}\big(\log z_{ij}+\log(\log z_{ij}-\ba_{ij}^\top \x +t -\log f_{\ba_{ij}})\big)
  -
  \sum_{k=1}^n\log(R^2 -x_k^2) -\log(t_{\max}(f,R)-t)
\end{aligned}
\end{equation}
is a $\big(1+n+2\sum_{j=1}^N m_j\big)$-self-concordant barrier of the interior of $\hat K(f,R)$.
\end{enumerate}
\end{proposition}
\begin{proof}
(a)  Clearly, $\hat K(f,R)$ is a compact set.
It is left to show the convexity of $\hat K(f,R)$.  Assume that $(\x^{(l)},t^{(l)},\z_1^{(l)},\ldots,\z_N^{(l)})^\top \in\hat K(f,R)$ for $l\in[2]$.  Suppose that the average of these points is $(\x,t,\z_1,\ldots,\z_N)^\top$.   As $K(f,R)$ is convex we obtain that $(\x,t)\in K(f,R)$.
It is left to show that  
\begin{equation*}
\log f_{\ba_{ij}}+\ba_{ij}^\top \x\le \log z_{ij} + t, \quad i\in[m_j], j\in[N].
\end{equation*}
This follows from the concavity of $\log z$.

\noindent
(b)  Assume that $(\x,t)\in K(f,R)$.  Hence $e^{f_{j}(\x)}\le e^{t}$ for $j\in[N]$.  Let 
\begin{equation*}
\bu(t)=(\x,t,\z_1(t),\ldots,\z_N(t))^\top, 
\z_{ij}(t)=e^{-f_{j}(\x)}f_{\ba_{1j}}e^{\ba_{ij}^\top \x}, i\in[m_j],j\in[N].
\end{equation*} 
As $\sum_{i=1}^{m_j}z_{ij}(t)=1$ we deduce that $\bu(t)\in \hat K(f,R)$.  Hence, the projection of $\hat K(f,R)$ on the first $n+1$ coordinates is $K(f,R)$.

\noindent
(c) The inequality \eqref{condR} yields that $f$ coercive.   Hence,  \eqref{minfchar} holds.  Therefore, (b) implies (c).

\noindent
(d) We claim
\begin{equation}\label{contcondhatK(f,R(f))inf}
\begin{aligned}
B_{\infty}(\bw,\mu/2)\subset \hat K(f,R)\subset B_{\infty}((\bw,R_{\max}).
\end{aligned}
\end{equation}
Observe that $R_{\max}\ge R\ge 1 +\log 4>1 \ge \mu$.  
Recall the condition \eqref{contcondK(f,R)}.  As $z_{ij}\in[0,1]$ we deduce the right hand side of \eqref{contcondhatK(f,R(f))inf}.   We now show the left hand side of  \eqref{contcondhatK(f,R(f))inf}.

The left-hand side of the condition \eqref{contcondK(f,R)} yields 
\begin{equation*}
B_{\infty}((\0, \bar t(f,R)), \mu/2)\subset B_{\infty}((\0, \bar t(f,R)), R/2)\subset K(f,R).
\end{equation*}
It is left to show the inequality
\begin{equation*}
f_{\ba_{ij}}e^{\ba_{ij}^\top\x}\le z_{ij}e^t \textrm{ for } |z_{ij}-z_{ij}(f)| , |t-\bar t(f,R)|\le \mu/2, \|\x\|_{\infty}\le \mu/2.
\end{equation*}
From the definition of $z_{ij}(f)$ and $\mu$ it follows that $z_{ij}\ge z_{ij}(f)/2$.  Clearly, $t\ge \bar t(f,R)-\mu/2$.    Thus, it is enough to show
\begin{equation*}
\begin{aligned}+
  f_{\ba_{ij}}e^{\ba_{ij}^\top\x}\le \frac{f_{\ba_{a_{ij}}}e^{\bar t(f,R)-\mu/2}}{2\sum_{i=1}^{m_j}f_{\ba_{ij}}}=\frac{1}{2}e^{-f_{j}(\0)}f_{\ba_{a_{ij}}}e^{\bar t(f,R)-\mu/2}\iff
\log 2+f_{j}(\0)+\ba_{ij}^\top \x\le \bar t(f,R)-\mu/2.
\end{aligned}
\end{equation*}
Recall that $f_{\cA_j}(\0)\le f(\0)$.  The definition of $a(f)$ and $\bar t(f,R)$  in Lemma~\ref{propKfR} yield
\begin{equation*}
\begin{aligned}
\log 2+f(\0)+\ba_{ij}^\top \x\le \log2 +f(\0)+a(f)\mu/2,\qquad 
\bar t(f,R)-\mu/2 =f(\0)+\frac{(a(f)+1)R}{2}-\mu/2.
\end{aligned}
\end{equation*}
As $1\le \mu$ and $R\ge 1+\log 4$ we deduce the left hand side of \eqref{contcondhatK(f,R(f))inf}.

The containment of the balls in \eqref{normin} and \eqref{contcondhatK(f,R(f))inf} yield
\eqref{contcondhatK(f,R(f))}.

\noindent
(e) From the definition of $\hat K(f,R)$ it follows that the function $\beta$ is a convex $C^{\infty}$ function in the interior int$(\hat K(f,R))$ , which blows to infinity when $\bu$ approaches the boundary of $(\hat K(f,R))$.  The equalities and inequality in \eqref{thetexam} shows that $\beta$ is $\big(1+n+2\sum_{j=1}^N m_j\big)$-self-concordant barrier of the interior of $\hat K(f,R)$.
\end{proof}

Thus we can apply the above Proposition under the assumptions of Lemma~\eqref{nulowest}.
\begin{corollary}\label{IPMfmin} Suppose that the assumptions of Lemma \ref{nulowest} hold.   Set
\begin{equation*}
\begin{aligned}
R&=(\lceil f(\0)\rceil- \lfloor t_{\min}(f)\rfloor)\lceil\sqrt{\prod_{k=1}^n (\|\bb_k\|^2_2+1)}\,\rceil+1+\lceil \log 4\rceil,\\
t_{\max}(f,R)&=\lceil f(\0)\rceil +(a(f)+1)R,  \\
\bar t(f,R)&=\lceil f(\0)\rceil+\frac{(a(f)+1)R}{2},\\
R_{\max} &=(a(f)+1)R+\lceil f(\0)\rceil-\lfloor t_{\min}(f)\rfloor,\\
K(f,R)&=\{(\x,t), \x\in\R^n, t\in\R, f(\x)\le t, \|\x\|_{\infty}\le R, t\le t_{\max}(f,R)\}.
\end{aligned}
\end{equation*}
Assume that $\hat K(f,R)$ and $\bw$ are given as in Proposition \ref{defhatKf}.
Consider the minimum problem in part (c) of Proposition \ref{defhatKf}.  
The short-step interior-point algorithm with the barrier $\beta$ starting at the point $\bw$ finds the value $f_{\min}$ within precision $\varepsilon>0$ in  
\begin{equation}\label{ipmotmat1}
O\big(\sqrt{1+n+2\sum_{j=1}^N m_j}\log\frac{2\big(1+n+2\sum_{j=1}^N m_j\big)R_{\max}}{\varepsilon \mu}\big)
\end{equation}
iterations.
\end{corollary}
The complexity bound in  Corollary~\ref{IPMfmin} only concerns the number of iterations. To obtain a polynomial-time complexity in the Turing model, one must also show that at each iteration, the numbers in the implementation of the "short-step" IPM can be rounded to maintain short rational representations. We leave this investigation for further work.\todo[inline]{SG:
Deleted ``We believe that this can be done as in \cite{deklerk_vallentin}'' as this relies on the dual central path}
\section{Polynomial time approximation of $\rho(\bF)$}\label{sec:minconfun}
\subsection{Conversion to a minimax problem}\label{subsec:conver}
Let $\cM_{d-1,n}\subset \Zp^{n}$ be the set of all vectors $\ba=(a_1,\ldots,a_{n})^\top$ with nonnegative integer coordinates whose sum is $d-1$.  Recall that $|\cM_{d-1,n}|={n+d-2\choose d-1}$. 
To each $\ba\in\cM_{d-1,n}$ we associate the monomials $\z^{\ba}=z_1^{a_1}\cdots z_{n}^{a_{n}}$ of degree $d-1$.
 Let $\bF:\Rp^{n}\to \Rp^{n}$ be a homogeneous polynomial map of degree $d-1\ge 1$ of the form
\begin{equation}\label{defbFi}
F_i(\z)=\sum_{\bb\in\cB_i} f_{\bb,i}z^{\bb}, \quad\cB_i\subseteq \cM_{d-1,n}, f_{\bb,i}>0, \; i\in[n], \;\bb \in \cB_i
\end{equation}
in which for each $i\in [n]$, $\cB_i$ is a non-empty set of exponents of monomials
effectively appearing in $F_i$.

We define for $\bx\in\R^n$, 
\begin{align}
f_i (\bx):= \log F_i(\exp(\bx))-(d-1)x_i, \; i\in [n], \qquad f:=\max_{i\in[n]} f_i \enspace ,\label{e-def-f}
\end{align}
and we set
\[
\cA_i := \{ \bb - (d-1) \be_i\mid \bb \in \cB_i \}, \;i\in [n], \qquad \cA:= \cup_{i\in [n]} \cA_i
\]
where $\be_i$ denotes the $i$th vector of the canonical basis of $\R^n$.
We shall refer to $f$ as the {\em Collatz-Wielandt function},
for it follows from the Collatz-Wielandt minimax formula (first equality in~\eqref{minmaxchar}) that if $\bF$ is weakly irreducible, $\log\rho(\bF)= \min_{\bx\in \R^n} f(\bx)$. Since $f(\bx + \lambda \1) = f(\bx)$ holds for all $\lambda\in\R$, the map $f$ cannot be coercive
on $\R^n$. However, we shall show that it is coercive on the subspace
$E_n:=\{\bx \in \R^n\mid x_n =0\}$.
\begin{theorem}\label{Fcoercthm1} Let $\bF=(F_1,\ldots,F_{n}):\Rpp^{n}\to\Rpp^{n}$ be a weakly irreducible $(d-1)$-homogeneous map.  
  Then the Collatz-Wielandt function $f$ is coercive on the subspace $\mathbf{E}_n$, and
  \[ \nu(\cA, \mathbf{E}_n)\ge \big(2(d-1)^2+1\big)^{-(n-1)/2} \enspace.
  \]
  Moreover,
  an $\varepsilon$-approximation of $\rho(\bF)$ can be obtained in polynomial time in $\<F>$ and $|\log\varepsilon|$.
\end{theorem}


\begin{proof}
  Let $\|\bx\|_H:=\max_{i\in [n]} x_i -\min_{i\in [n]} x_i$ be the Hilbert's semi-norm on $\R^n$.  
  We write a general element of $\mathbf{E}_n$ as $\bx=(\tilde{\bx},0)$ with $\tilde{\bx}\in \R^{n-1}$,
  and set $\tilde{f}(\tilde{\bx}):=f((\bx,0))$.  Observe that for $\bx\in \mathbf{E}_n$ we have that
  $\max_{i\in[n]}x_i\ge 0$ and $\min_{i\in[n]}x_i\le 0$.   Hence $\|\bx\|_\infty\le \|\bx\|_H\le 2\|\x\|_\infty$ for $\x\in \mathbf{E}_n$.  Thus, the restriction of Hilbert semi-norm to $\mathbf{E}_n$ is actually a norm.
  We first show that $f$ is coercive on $\mathbf{E}_n$, or equivalently,
  that $\tilde{f}$ is coercive on $\R^{n-1}$. Since $\bF$ is weakly irreducible,
  by~\cite[Th.~4]{GG04}, every sub-level set $\{\bx \in \R^n\mid f(\bx) \leq \lambda \}$ is bounded
  in Hilbert's semi-norm,
  and this entails that every sub-level set of $\tilde{f}$ is compact.
  Then, the condition of Part (b) of Lemma \ref{coerclemma} is satisfied,
  showing that $\tilde{f}$ is coercive. We now apply \Cref{nulowest} to the map $\tilde{f}$.
  The exponents of the monomials appearing in $\tilde{f}$ are of the form $\tilde{\bb}-(d-1)\be_i$, for $i\in [n-1]$ and $\bb\in \cB_i$ or of the form $\tilde{\bb}$ with $\bb\in \cB_n$, where
  $\tilde{\bb}\in \R^{n-1}$ denotes the monomial gotten by deleting the $n$ coordinate
  of $\bb$. Observe that for all $\bb\in \cup_{i\in[n]} \cB_i$, $\|\bb\|_2\leq \|\bb\|_1 = d-1$.
  Hence, $\|\tilde{\bb}-(d-1)\be_i\|_2=
  \sqrt{\sum_{j}\bb_j^2 -2(d-1) \bb_i + (d-1)^2}\leq
  \sqrt{\sum_{j}\bb_j^2 + (d-1)^2}\leq \sqrt{2} (d-1)$
for $i\in [n-1]$ and $\bb\in \cB_i$ whereas
  $\|\tilde{\bb}\|_2\leq d-1$ for $\bb\in \cB_n$.
  Hence, the inequality \eqref{nulowest1}  yields $\nu(\cA(\bF))\ge \big(2(d-1)^2+1\big)^{-(n-1)/2}$.   Theorem \ref{mainthm} yields that a $\varepsilon$ approximation of $\log\rho(\bF)$
  can be computed in polynomial time.

  We now show how to approximate $\rho(F)$. Using the minimax characterization of the spectral radius~\eqref{minmaxchar} and specializing $\bx$ to the unit vector $\1$ in~\eqref{minmaxchar},
  and noting that $|\cB_i|\leq |\cM_{d-1,n}| ={n+d-2\choose d-1}$, we get
\[
\log \rho(\bF)\leq a:=\log {n+d-2\choose d-1} + \max_{\bb,j} \log f_{\bb,j} \enspace .
\]
Hence, using the fact that the exponential map has Lipschitz constant $\exp(a)$ on the interval $(-\infty,a]$, we deduce that
if $\alpha$ is an $\eta$-approximation of $\log \rho(\bF)$, then $\exp(\alpha)$ is an $\varepsilon$-approximation of $\rho(\bF)$, with
\[\varepsilon= \eta {n+d-2\choose d-1} \max_{\bb,j} f_{\bb,j}
\enspace .
\]
Thus, to
end up with an $\varepsilon$-approximation of $\rho(\bF)$, it suffices
to approximate $\log \rho(\bF)$ with a precision $\eta$ of bit-size
polynomially bounded in the input size.%
\end{proof}
\subsection{Entropic characterization of $\rho(\bF)$}\label{subsec:entrchar}
Assume that $\bF:\R^{n} \to \R^{n}$ is as in~\eqref{e-hommap}.

We now recall the entropic characterization of $\rho(\bF)$ \cite[Section 6.3]{FG20}.
A tensor $\mu=[\mu_{i_1,\ldots,i_d}]\in \R_{ps,+}^{n^{\times d}}$ is called
an {\em occupation measure} if
\begin{eqnarray}
 &&\sum_{i_1\in [n],\dots, i_d\in [n]}\mu_{i_1,i_2,\dots, i_d}=1,\notag\\
 &&\sum_{i_2,\dots,i_d\in [n]} 
 \mu_{j,i_2,\dots,i_d} = \displaystyle\sum_{i_1,i_3,\dots,i_d\in [n]
}
 \mu_{i_1,j,i_3,\dots,i_d} 
 \qquad \forall j\in [n]\enspace .\label{tenocmeas}
\end{eqnarray}
Note that in view of the partial symmetry of $\mu$ the condition \eqref{tenocmeas}
is equivalent to
\[\sum_{i_2,\dots,i_d\in[n]} 
 \mu_{j,i_2,\dots,i_d} = \displaystyle\sum_{i_k=j,i_1,\dots,i_{k-1},i_{k+1},\dots,i_d\in [n] 
}
 \mu_{i_1,\dots,i_d} 
 \qquad \forall j\in [n]\enspace ,\]
 for $k\in\{2,\ldots,n\}$.\todo[inline]{SF: See part (c) on page 7}
We denote by $\fO(n^{\times (d-1)})\subset \R_{ps,+}^{n^{\times d}}$ the set of occupation measures.  For $\cT\in\R_{ps,+}^{n^{\times d}}$ we denote by $\fO(n^{\times (d-1)},\supp \cT)\subseteq \fO(n^{\times (d-1)})$ the set of occupation measures whose support is contained in $\supp \cT$.

Assume that $\bF$ is weakly irreducible.
Then there exists a unique positive  eigenvector $\bu>\0$  (up to rescaling) such that
\begin{equation}\label{poseigvec}
\bF(\bu)=\rho(\bF)\bu^{\circ(d-1)}, \quad \bu>\0.
\end{equation}
Let $D(\bF)(\x)=[\frac{\partial F_i(\x)}{\partial x_j}]$ be the Jacobian matrix of $\bF$.
Set
\begin{equation}\label{defATu} 
A:=\diag(\bu)^{-(d-2)}D(\bF)(\bu), \quad \diag(\bu)=\diag(u_1 ,\cdots,u_{n}).
\end{equation}
Recall that $A$ is a nonnegative irreducible matrix that satisfies \cite[(3.18)]{FG20}:
\begin{equation}\label{defpropA}
A\bu=(d-1)\rho(\bF)\bu, \quad A^\top\w=(d-1)\rho(\bF)\w,\quad\w^\top \uu=1.
\end{equation}

Theorem 19 in \cite{FG20} states:
\begin{theorem}[Entropic characterization of the spectral radius]\label{entropcharsrten}
The spectral radius of  a nonzero nonnegative $\bF=(F_1,\ldots,F_{n})^\top$,  has the following characterization 
\begin{equation}\label{entropcharsrten1}
\begin{aligned}
\log \rho(\bF) = 
\max_{\mu\in \fO(n^{\times(d-1)})}
\sum_{i_1,\dots,i_d\in [n]}
\mu_{i_1,\dots, i_d} \log \Big(\frac{(\sum_{k_2,\dots, k_d}\mu_{i_1,k_2,\dots, k_d})f_{i_1,\dots, i_d}}{\mu_{i_1,\dots, i_d}}\Big) \enspace .
\end{aligned}
\end{equation}
Assume that $\bF$ is weakly irreducible. Let $\uu>0$ be the unique positive eigenvector $\uu$ satisfying \eqref{poseigvec} and let $\w>\0$ be defined as in \eqref{defpropA}.  Let $\mu=[\mu_{i_1,\ldots,i_d}]\in \R_{ps,+}^{n^{\times d}}$ be the tensor given by
\begin{equation}\label{mumaxten}
\mu_{i_1,\ldots,i_d}=\frac{1}{\rho(\bF)} w_{i_1}u_{i_1}^{-(d-2)}f_{i_1,\ldots,i_d}u_{i_2}\cdots u_{i_d} \textrm{ for } i_1,\ldots,i_d\in[n].
\end{equation}
Then $\mu$ is an occupation measure whose support is $\supp \bF$. Furthermore,
\begin{equation}\label{logeqtenT}
\log\rho(\bF)=\sum_{i_1,\ldots,i_d\in [n]} \mu_{i_1,\dots, i_d} \log \Big(\frac{(\sum_{k_2,\dots, k_d}\mu_{i_1,k_2\dots ,k_d})f_{i_1,\dots, i_d}}{\mu_{i_1,\dots ,i_d}}\Big) \enspace .
\end{equation}
\end{theorem}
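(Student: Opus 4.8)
The goal is to prove Theorem~\ref{entropcharsrten}, the entropic (variational) characterization of $\log\rho(\bF)$ for a nonnegative partially symmetric tensor, together with the explicit formula \eqref{mumaxten} for the optimal occupation measure in the weakly irreducible case. Since the statement cites \cite[Theorem 19]{FG20} directly, the plan is essentially to transcribe and streamline that argument, and I would organize it around convex duality applied to the Collatz–Wielandt formula \eqref{infcharrho}.

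\medskip

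\textbf{Plan.} First I would rewrite the $\min\max$ side of \eqref{minmaxchar} in exponential coordinates: setting $\z=\exp(\w)$ with $\w\in\R^{n+1}$ (allowed since the $\inf$ over $\z>\0$ is what matters), we get
\[
\log\rho(\bF)=\inf_{\w\in\R^{n+1}}\ \max_{i\in[n+1]}\Big(\log F_i(\exp\w)-(d-1)w_i\Big),
\]
which is exactly the coercive-after-normalization problem studied in Section~\ref{sec:minconfun}: each inner term is $f_{\tilde\cA_i}(\w)$ in the notation of Lemma~\ref{Foerc}, a log-Laplace transform of a discrete nonnegative measure supported on $\tilde\cA_i=\{\bc-(d-1)\tilde\be_i:\bc\in\supp F_i\}\subset\R^{n+1}$. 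Next I would pass to the epigraph form, minimizing a scalar $t$ subject to $f_{\tilde\cA_i}(\w)\le t$ for all $i$, and apply Lagrangian/Fenchel duality. The dual variables are nonnegative weights; after the standard computation of the Legendre transform of a log-sum-exp (which produces a relative-entropy term: the conjugate of $\log\sum_\ba c_\ba e^{\ba^\top\w}$ at a point is $\sum_\ba \mu_\ba\log(\mu_\ba/c_\ba)$ on the simplex of total mass matching, $+\infty$ otherwise), the dual variables assemble — after reindexing $\ba+(d-1)\tilde\be_i\leftrightarrow(i,i_2,\dots,i_d)$ and using \eqref{fi1idFiaform} — into a partially symmetric nonnegative tensor $\mu$. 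The two linear constraints coming from the $w_i$-stationarity and the normalization $\sum\mu=1$ are precisely \eqref{tenocmeas}, i.e. $\mu\in\Omega((n+1)^{\times(d-1)})$, and the dual objective is exactly the right-hand side of \eqref{entropcharsrten1}. Weak duality gives $\ge$; strong duality gives equality.

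\medskip

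\textbf{Attaining the max and the explicit optimizer.} For the second half, under weak irreducibility I would invoke the Perron–Frobenius data already recalled in \eqref{poseigvec}–\eqref{defpropA}: the positive eigenvector $\uu$ of $\bF$ and the left Perron eigenvector $\w$ of the associated nonnegative irreducible matrix $A$. Plugging $\w^\star=\log\uu$ into the primal problem shows the $\inf$ is attained (consistent with Lemma~\ref{Foerc} and Lemma~\ref{minplemma1}, coercivity giving compact $\rK_{\min}$). Then I would simply \emph{verify} that the tensor $\mu$ defined by \eqref{mumaxten} is primal-feasible-dual-feasible: (i) nonnegativity and partial symmetry are immediate from those of $\cF$ and positivity of $\uu,\w$; (ii) the normalization $\sum\mu_{i_1,\dots,i_d}=1$ follows from $\sum_i w_iu_i\cdot u_i^{-(d-1)}F_i(\uu)/\rho(\bF)=\sum_i w_iu_i=\w^\top\uu=1$ using \eqref{poseigvec} and \eqref{defpropA}; (iii) the balance condition \eqref{tenocmeas} reduces, after summing, to the two eigen-relations $A\uu=(d-1)\rho(\bF)\uu$ and $A^\top\w=(d-1)\rho(\bF)\w$ in \eqref{defATu}–\eqref{defpropA}. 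Finally, substituting \eqref{mumaxten} into the entropic functional, the logarithm telescopes: the argument of $\log$ becomes $(\sum_{k_2,\dots,k_d}\mu_{i_1,k_2,\dots,k_d})f_{i_1,\dots,i_d}/\mu_{i_1,\dots,i_d}=\rho(\bF)\,u_{i_1}^{d-1}/(u_{i_2}\cdots u_{i_d})$ after using the eigenvector equation in the form $\sum_{k}\mu_{i_1,k_2,\dots,k_d}=w_{i_1}u_{i_1}$, so the functional equals $\sum\mu_{i_1,\dots,i_d}\big(\log\rho(\bF)+\sum_{j}( \pm)\log u_{i_j}\big)$, and the $\log u$ terms cancel by the occupation-measure balance, leaving $\log\rho(\bF)$. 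This establishes \eqref{logeqtenT} and shows $\mu$ is optimal, closing the gap left by weak duality and giving $\supp\mu=\supp\bF$ since $\uu,\w>\0$.

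\medskip

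\textbf{Main obstacle.} The routine but delicate step is the duality computation itself: justifying strong duality (Slater-type condition / use of coercivity from Section~\ref{sec:coerc} to guarantee the primal value is finite and attained, and a closedness argument so there is no duality gap), and carefully matching the bookkeeping between the $\{f_{\ba}\}$-indexed measures and the tensor indices $(i_1,\dots,i_d)$ including the multinomial factors $a_1!\cdots a_n!$ from \eqref{fi1idFiaform}. Everything else is algebraic verification using the Perron–Frobenius identities already stated.
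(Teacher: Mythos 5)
The paper itself does not prove this theorem: it is quoted verbatim as \cite[Theorem 19]{FG20} (``Theorem 19 in \cite{FG20} states:''), so there is no in-paper proof to compare your argument against. Your reconstruction --- rewriting the Collatz--Wielandt formula \eqref{infcharrho} in exponential coordinates as a min--max of log-Laplace transforms, dualizing the epigraph formulation via the relative-entropy conjugate of log-sum-exp so that the dual variables assemble into occupation measures with objective equal to the right-hand side of \eqref{entropcharsrten1}, and then certifying optimality of \eqref{mumaxten} by direct verification with the Perron--Frobenius data \eqref{poseigvec}--\eqref{defpropA} --- is the natural route and is consistent in spirit with how the result is obtained in \cite{FG20}; your algebraic checks (total mass $\w^\top\uu=1$, balance constraints reducing to $A\uu=(d-1)\rho(\bF)\uu$ and $A^\top\w=(d-1)\rho(\bF)\w$, telescoping of the logarithms to leave $\log\rho(\bF)$) are correct as stated, up to being careful with the $(d-1)$ normalization in the occupation-measure constraint \eqref{tenocmeas}, where the row-sum identity one actually obtains from \eqref{mumaxten} is $\sum_{i_2,\dots,i_d}\mu_{j,i_2,\dots,i_d}=w_ju_j=\sum_{i_1,i_3,\dots,i_d}\mu_{i_1,j,i_3,\dots,i_d}$, i.e.\ the balance must be read with the appropriate averaging over the last $d-1$ slots.

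The genuine gap in your proposal is the one you yourself flag and then leave open: the first half of the theorem is asserted for \emph{any} nonzero nonnegative $\bF$, whereas your strong-duality/attainment argument leans on coercivity (Lemma \ref{Foerc}) and the existence of a positive eigenvector, which are available only under weak irreducibility. For general nonnegative $\bF$ the infimum in the exponential-coordinate problem need not be attained, $\rho(\bF)$ may even vanish (so both sides of \eqref{entropcharsrten1} must be interpreted with the conventions $0\log 0=0$, $\log 0=-\infty$), and a Slater-type condition for the constraint system $f_{\tilde\cA_i}(\w)\le t$ does not by itself close the duality gap for the entropic dual in these degenerate supports. One standard repair is a perturbation argument (replace $\cF$ by $\cF+\epsilon\cJ$ with $\cJ$ the all-ones tensor, apply the weakly irreducible case, and pass to the limit using continuity of $\rho$ and upper semicontinuity of the entropy functional on the compact set of occupation measures), or a direct minimax/closedness argument; without some such step the inequality $\le$ in \eqref{entropcharsrten1} for general nonnegative $\bF$ is not established, and this is precisely the non-routine part of the proof.
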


\begin{corollary}\label{corpoltrho(F)}{(of Theorem \ref{Fcoercthm1})}
Let $\bF:\R^{n}\to \R^{n}$ be a homogeneous polynomial map of degree $d-1\ge 1$ with nonnegative coefficients.  Denote by $\cF=[f_{i_1,\ldots,i_d}]\in \R_{ps,+}^{n^{\times d}}$ the partially symmetric tensor corresponding to $\bF$.  Assume that $\bF$ is weakly irreducible.
Then for a given $\varepsilon\in\Qpp$, the maximum given in \eqref{entropcharsrten1} can be approximated within $\varepsilon$ precision in polynomial time. 
\end{corollary}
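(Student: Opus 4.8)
The plan is to read the corollary off from the identifications already established: by Theorem~\ref{entropcharsrten} the maximum in~\eqref{entropcharsrten1} equals $\log\rho(\bF)$, while by the conversion of Subsection~\ref{subsec:conver} this same quantity is the infimum $f_{\min}$ of the function $f$ defined in~\eqref{deff(x)F}. Computing the entropic maximum is therefore no harder than computing $\rho(\bF)$ itself --- in fact slightly easier, since here no final exponentiation is required --- so the statement is a direct application of Theorem~\ref{mainthm}. The case $d-1=1$, where $\rho(\bF)$ is the Perron root of a nonnegative rational matrix, is classical, so I assume $d-1>1$.

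First I would record that, for the $f$ attached to $\bF$ in Subsection~\ref{subsec:conver}, $f_{\min}=\log\rho(\bF)$: by~\eqref{deff(x)F}, the substitution~\eqref{restposcond0}, and the scale-invariance of $\z\mapsto\max_{i}F_i(\z)/z_i^{d-1}$, one gets $f_{\min}=\log\min_{\z>\0}\max_{i}F_i(\z)/z_i^{d-1}=\log\rho(\bF)$ by the min--max characterization~\eqref{minmaxchar}; and by Lemma~\ref{Foerc} this $f$ is coercive, i.e.\ \eqref{coercon} holds. Together with Theorem~\ref{entropcharsrten}, the maximum in~\eqref{entropcharsrten1} equals $f_{\min}$, so it suffices to $\varepsilon$-approximate $f_{\min}$. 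To be in the integer regime~\eqref{cAjfbacond} demanded by Theorem~\ref{mainthm}, I would either invoke the remark recorded just after~\eqref{cAjfbacond} (positive rational data is admissible) or clear denominators: multiply every $F_{i,\ba}$ by the least common multiple $D$ of their denominators, which replaces $f_{\min}$ by $f_{\min}+\log D$, apply Theorem~\ref{mainthm} with tolerance $\varepsilon/2$, and subtract an $\varepsilon/2$-approximation of $\log D$ furnished by Lemma~\ref{approxexlog}.

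The one point needing care is the bit-length accounting, namely that the input size~\eqref{weaksepdata} of Theorem~\ref{mainthm} for this $f$ is polynomially bounded in the quantity~\eqref{polcprhod}. From~\eqref{logFiid}, $|\cA_i|=|\supp F_i|$, every $\ba\in\cA_i$ has $\|\ba\|_\infty\le d-1$ so that $\langle\ba\rangle\le n(\langle d\rangle+1)$, and the coefficient attached to such $\ba$ is the corresponding $F_{i,\ba}$; hence~\eqref{weaksepdata} is at most $\langle\varepsilon\rangle+n(\langle d\rangle+1)\sum_{i=1}^{n+1}|\supp F_i|+\sum_{i=1}^{n+1}\sum_{\ba\in\supp F_i}\langle F_{i,\ba}\rangle$. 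Since weak irreducibility forces every $\supp F_i$ to be nonempty, both $n+1$ and $\sum_{i}|\supp F_i|$ are at most $\sum_{i}\sum_{\ba}\langle F_{i,\ba}\rangle$, which in turn is bounded by the quantity~\eqref{polcprhod}; so the whole expression is polynomial in~\eqref{polcprhod}. This is the subtle --- and only --- place where one could err: the number of monomials must be counted through the supports $\supp F_i$, which enter the input~\eqref{polcprhod}, and must \emph{not} be estimated by the a priori bound $\binom{n+d-1}{d-1}$, which is exponential in the input. Granting this, Theorem~\ref{mainthm} returns a rational $t^\star$ with $f_{\min}\in[t^\star-\varepsilon,t^\star+\varepsilon]$ in time polynomial in~\eqref{polcprhod}, and by Theorem~\ref{entropcharsrten} this $t^\star$ is the required $\varepsilon$-approximation of the maximum in~\eqref{entropcharsrten1}. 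The main obstacle is thus just this complexity bookkeeping; the mathematical content is an immediate concatenation of Theorems~\ref{entropcharsrten} and~\ref{mainthm}.
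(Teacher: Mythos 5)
Your proposal is correct and follows essentially the same route as the paper: the corollary is obtained by combining the entropic characterization (Theorem~\ref{entropcharsrten}), which identifies the maximum in~\eqref{entropcharsrten1} with $\log\rho(\bF)=f_{\min}$ via the conversion of Subsection~\ref{subsec:conver} and Lemma~\ref{Foerc}, with the ellipsoid-based result of Theorem~\ref{mainthm} (i.e.\ Theorem~\ref{polcprho}). Your explicit bit-length accounting --- counting monomials through $|\supp F_i|$ rather than $\binom{n+d-1}{d-1}$ and clearing denominators --- merely makes precise bookkeeping that the paper leaves implicit, so no gap remains.
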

\subsection{A weakly irreducible map $\bF$ with an eigenvector with doubly exponential coordinates}\label{subsec:expswit}

  Let $W$ be a positive rational number, and $d\geq 3$.
We define the homogeneous map of degree $d-1$, $\bF: \Rp^{2n}\to \Rp^{2n}$, 
\begin{equation}\label{Fexamp-new}
\begin{aligned}
  &F_i(\z)=W z_1^{d-2} z_{i+1} \textrm{ for } i=1,\ldots,n,\\
  &F_{n+i}(\z)=W^{-1} z_{n+1}^{d-2} z_{n+1+i} \textrm{ for } i=1,\ldots,n-1,\\
&   F_{2n}(\bz) =W^{-1} z_1z_{n+1}^{d-2} \enspace .
\end{aligned}
\end{equation}
We set $\bard:=d-1$.
\begin{figure}
  \begin{center}

    \begin{tikzpicture}[->, >=stealth, node distance=1.2cm, every node/.style={circle, draw, minimum size=8mm, inner sep=0pt}]
    \foreach \i in {1,...,5} {
        \node (T\i) at (\i*1.5, 2) {\small\i,$+\ell$};
    }
    
    \foreach \i in {1,...,5} {
        \pgfmathtruncatemacro{\j}{6-\i}
        \pgfmathtruncatemacro{\k}{5+\i}
        \node (B\i) at (\j*1.5, 0) {\small\k,$-\ell$};
    }
    
    \foreach \i in {1,...,4} {
      \draw (T\i) -- (T\the\numexpr\i+1);
    }
        \foreach \i in {1,...,4} {
          \draw (B\i) -- (B\the\numexpr\i+1);
    }

        \draw (T5) -- (B1); 
        \draw (B5) -- (T1); 
    
    \foreach \i in {2,...,5} {
        \draw[bend left=-20-\i*5] (T\i) to (T1);
    }
    \foreach \i in {2,...,5} {
        \draw[bend left=-20-\i*5] (B\i) to (B1);
    }
    \end{tikzpicture}
  \end{center}
  \caption{The Markov chain with rewards associated to the counter-example~\eqref{Fexamp-new}, with $n=5$.}\label{fig-mc}
 \end{figure}

\begin{lemma}\label{example}
  Suppose that $d\geq 3$. Then, 
  the map $\bF$ defined by~\eqref{Fexamp-new} is weakly irreducible.
  It has a spectral radius equal to $1$,
a unique positive eigenvector $\bu$ such that $u_1=1$, and
$\bU=(U_1,\ldots,U_{2n})^\top:=\log\bu $ satisfies:
\begin{align}
  \label{e-def-u}
U_i =-\bard \frac{\bard^{i-1}-1}{\bard -1} \log W, i=1,\dots,n+1, \;
U_{n+i} = -\bard \frac{\bard^n - \bard^{i-1}}{\bard -1}\log W , i=2,\dots,n \enspace .
\end{align}
\end{lemma}
\begin{proof}
  First, observe that the digraph $\overset{\rightarrow}G(\bF)$ contains a cycle with $2n$ vertices: $i\mapsto i+1$ for $i\in[2n]$, with the convention $2n+1\equiv 1$.
This implies that $\bF$ is weakly irreducible, and therefore, 
  $\bF$ has a unique positive eigenvector up to a scalar factor.
  Finally, a straightforward computation shows that by setting $\bu=\exp(\bU)$, where $\bU$ is defined
  by~\eqref{e-def-u}, we obtain $F(\bu)=\bu^{\circ(d-1)}$.
\end{proof}
\begin{proof}[Proof of \Cref{th-cex}]
Since $u_n/u_1 $ is doubly exponential in the input size, we deduce
\Cref{th-cex}, showing that there is no polynomial algorithm providing an $O(1)$ approximation
of the eigenvector $\bu$.
\end{proof}
\begin{remark}\label{rk-proba}
  The counter-example~\eqref{Fexamp-new} becomes intuitive through the following probabilistic interpretation. Consider the Markov chain depicted in Figure~\ref{fig-mc},
  with $2n$ states. We still use the convention $2n+1\equiv 1$. For all $1\leq i\leq 2n$, the probability of moving from state $i$ to state $i+1$ is $1/\bar{d}$. Moreover,
  for all $i=1,\dots,n$, the probability of moving from state $i$ to state $1$ is $1-1/\bar{d}$. Similarly, the probability of moving from state $n+i$ to state $n+1$ is $1-1/\bar{d}$. Each visit to a state $i=1,\dots,n$ generates an income of $\ell\coloneqq \log W$, whereas each visit to a state $n+i$ generates an income of $-\ell$. Let $P\in \R^{2n\times 2n}$ represent the transition matrix of this chain, and let $r=\ell (1,\dots,1,-1,\dots,-1)^\top \in \R^{2n}$ denote the vector of incomes. The eigenvector $F(\bu)=\bu^{\circ(d-1)}$ can then be expressed as $\bU=r+P\bU$.
  Then, if $X_0,X_1,\dots$ if a trajectory of the Markov chain with transition $P$,
  and if $\tau_{ji}$ is the first hitting time of state $i$ starting from
  state $j$, we have $U_j-U_i = \mathbb{E}(r_{X_0}+ \dots + r_{X_{\tau_{ji}-1}}| X_0=j)$,
  indicating that $U_j-U_i$ is the expected sum of incomes accumulated from state $j$ until reaching state $i$.
  Observing the graph in~\Cref{fig-mc}, we note that when $i\leq n$, $U_1-U_i = \ell\times \mathbb{E}\tau_{1i}$. To reach state $i$ from state $1$, one must traverse $i-1$ consecutive  forward arcs $j\mapsto j+1$, with $j=1,\dots,i-1$. This implies that the expected
  hitting time $\mathbb{E}\tau_{1i}$ is exponential in $i$, explaining the exponential dependence of $u_i$ on $i$ in~\eqref{e-def-u}.
\end{remark}
Consider now the modified map $\tilde{\bF}:\R^{2n}\to \R^{2n}$ such that
\[
\tilde F_1(\bz) = F_1(z) + z_{n+1}^{d-1}, \qquad \tilde F_i(\bz)=F_i(\bz), 2\leq i\leq 2n \enspace .
\]
\begin{lemma}\label{lemma-tight}  For a rational number $W>1$
  We have
  \[
  1< \rho(\tilde{\bF}) \leq
  1+W^{-{\bard}^{n+1}} \enspace .
  \]
\end{lemma}
\begin{proof} For $W\in\Q_{++}$, the characterization $\rho(\tilde{\bF})$ given by \Cref{minmaxchar} for the eigenvector $\bu$ in Lemma \ref{example} yields the inequalities
\begin{equation*}
1=\min_{i\in[2n]}\frac{\tilde F_{i}(\bu)}{u_i^{(d-1)}}\le \rho(\tilde{\bF})\le\max_{i\in[2n]}\frac{\tilde{F}_{i}(\bu)}{u_i^{(d-1)}}=F_1(\bu)=1+u_{n+1}^{d-1}=1+W^{-\bard^2(\bard^n-1)/(\bard -1)}
\end{equation*}
Observe that $(\bard^n-1)/(\bard -1)\ge \bard^{n-1}$.   Hence, for $W>1$ we get the inequality $ \rho(\tilde{\bF}) \leq 1+W^{-{\bard}^{n+1}}$.
Let $\tilde {\bF}(\bv)=\rho(\tilde{\bF})\bv^{\circ(d-1)}$.

To show the inequality $\rho(\tilde{\bF})>1$ we need to introduce the following maps.
Define for $\z\in\R_{++}^{2n}$ the following selfmaps of $\R_{++}^{2n}$:
\begin{equation*}
\begin{aligned}
&\bG(\z)=(G_1(\z),\ldots,G_{2n}(\z))=(F_1^{1/(d-1)}(\z),\cdots,F_{2n}^{1/(d-1)}(\z)), \\
&\tilde{\bG}(\z)=(\tilde G_1(\z),\ldots,\tilde G_{2n}(\z))=
((\tilde F_1)^{1/(d-1)}(\z),\cdots,\tilde F_{2n}^{1/(d-1)}(\z)).
\end{aligned}
\end{equation*}
We define the spectral radii of $\rho(\bG), \rho(\tilde{\bG})$ to be equal to $\rho^{1/(d-1)}(\bF), \rho^{1/(d-1)}(\tilde{\bF})$ respectively.  Clearly,  the spectral radii of $\bG$
has the characterization
\begin{equation*}
\rho(\bG)=\min_{\z\in \Rpp^n} \max_{i\in[n]} \frac{G_i(\z)}{z_i}=\max_{\z\in\Rp^n\setminus\{\0\}}\min_{i\in[n],z_i>0} \frac{G_i(\z)}{z_i} \enspace .
\end{equation*}
 Similarly, $\rho(\tilde \bG)$ have the above  characterizations.   Clearly, the eigenvectors corresponding to $\rho(\bG)$ and $\rho(\tilde{\bG})$ are $\bu$ and $\bv$ respectively.

 Denote by $\bG^k(\z)=\underbrace{\bG(\bG(\cdots(\bG(\z))\cdots)}_k$ and $\tilde{\bG}^k(\z)=\underbrace{\tilde{\bG}(\tilde{\bG}(\cdots(\tilde{\bG}(\z))\cdots)}_k$ the  $k$-iterates of $\bG$ and $\tilde{\bG}$ respectively.
From the arguments in \cite{GG04} it follows that $\tilde{\bG}^k$ has unique positive eigenvector $\bv$ with the eigenvalue (spectral radius) $\rho(\tilde{\bG}^k)=\rho^{k/(d-1)}(\tilde{\bG})$.    Furthermore,  $\rho(\tilde{\bG}^k)$ has similar characterization to $\rho(\bG)$.

  We have $\bu=\bG(\bu)\leq \tilde{\bG}(\bu)$, and $u_1 <\tilde G_1(\bu)$. Using the weak irreducibility of $\tilde \bF$, we see that the strict inequality propagates when repeatedly applying $\tilde{\bG}$, and
  deduce that $\bu \ll \tilde{\bG}^{2n} (\bu)$ in which $\ll$ means that the strict inequality holds
  on every coordinate.  Hence, there is a constant $\lambda>1$ such that $\lambda \bu \leq \tilde{\bG}^{2n} (\bu)$. Using the maximin characterization of the spectral
  radius (last expression in the above characterization of $\rho(\bG)$, we deduce that $\rho(\tilde{\bG}^{2n})\geq \lambda$,
  and so, $\rho(\tilde{\bG})\geq \lambda^{\frac{1}{2n}}>1\Rightarrow \rho(\tilde{\bF})>1$. 
\end{proof}
We next make use~\Cref{lemma-tight} to deduce that the Phase I program must be solved
with a precision having an exponential number of bits to decide the feasibility
of a geometric program.
\begin{proof}[Proof of~\Cref{th-phase1}]
  We consider the following feasibility problem, where $\tilde{\bF}$ is defined as above:
  \begin{align}
\text{does there exist a vector } \bv\in \Rpp^{2n} \text{ such that }
  \tilde{\bF}(\bv) \leq \bv^{\circ(d-1)} \enspace .\label{e-feasible}
  \end{align}
  By the minimax characterization of the spectral radius (see~\Cref{minmaxchar}),
  if~\eqref{e-feasible} was feasible, we would have $\rho(\tilde{bF})\leq 1$, contradicting~\Cref{lemma-tight}. Making the change of variable $\bV=\log\bv$, we rewrite~\eqref{e-feasible}
  as a feasibility problem of the form~\eqref{e-feasibility}, with $2n$ inequalities
  \[ f_i(\bV):= \log \tilde{F}_i(\exp(\bV))-(d-1)V_i \leq 0, i\in [2n] \enspace .\]
  The Phase I program reads
  \[
  \min t, t \geq f_i(\bV), i\in [2n], (\bV,t)\in \R^{2n}\times \R \enspace .
  \]
  Setting $\mu:=\exp(t)$, we see that every feasible solution
  $(\bv,t)$ of the Phase I program satisfies $\tilde{F}_i(\bv) \leq \mu \bv^{\circ d-1}$.
  Using again the minimax characterization of the spectral radius, we deduce
  from \Cref{lemma-tight}
  that the value of the Phase I program is $\log \rho(\tilde{\bF})\leq \log (1+ W^{-\bard^{n+1}})
  \leq W^{-\bard^{n+1}}$. Specializing $W=2$ and $d=3$, and noting then that $\<f>=\Theta(n)$,
  we see that the value of the Phase I program
  is in $2^{2^{-\Omega(\<f>)}}$, as claimed in the theorem.

  Consider now the same feasibility problem, with $\bF$ instead of $\tilde{\bF}$.
  By~\Cref{example}, setting $\bv=\bu$ yields a solution, and the value
  of the Phase I problem is zero.
  \end{proof}
\section{Polynomial time approximation of log-eigenvector of strongly irreducible $\bF$}\label{sec:aplogeig}
Recall that a $(d-1)$-homogeneous map is strongly irreducible if $D(\bF)(\1)$ has positive off-diagonal elements.  The aim of this section to prove the following theorem:
\begin{theorem}\label{sirFthm1}
  Assume that $\bF: \R_{++}^{n}\to \R_{++}^{n}$ is a strongly irreducible homogeneous polynomial map with coefficients
$f_{\bb,j}\in \Qpp$ for $\bb\in \cB_j, j\in[n]$.   Then the log of the positive eigenvector of $\bF$ can be approximated in polynomial time.
\end{theorem}
To prove this theorem, we need several lemmas.
\begin{lemma}\label{lem-nu-one}
    Assume that $\bF: \R_{++}^{n}\to \R_{++}^{n}$ is a strongly irreducible homogeneous polynomial map with nonnegative coefficients. 
 Then $\nu(\cA(\bF))\ge 1$.  
\end{lemma}
\begin{proof}
Let
 $\|\y\|_H=\max_{i,j\in[n]} y_i-y_j$ for $\y\in\R^{n}$,  and $\E_n=\{\y\in\R^{n}: y_{n}=0\}$.
Observe $\|\y\|_{\infty}\le \|\y\|_H$ for $\y\in\E_n$.
Assume that $\y\in\E_n$ and $\|\y\|_\infty=1$.   
Observe  
$$\nu(\cA(\bF))=\min_{\y\in\E_n, \|\y\|_\infty=1} \max_{\bb\in\cA_i,i\in[n]}(\bb-(d-1)\be_i)^\top\y=\max_{\bb\in\cA_i,i\in[n]}(\bb-(d-1)\be_i)^\top\y^\star.$$
Let $\|\y^\star\|_H=y_p^\star-y_q^\star\ge 1$. Then $y_p^\star\ge y^\star_i\ge y_q^\star$ for $i\in[n]$.
Owing to the strong irreducibility assumption, there exists $\bb\in \mathcal{A}_i$ such that $\bb_p>0$. 
Then, recalling that $\bF$ is homogeneous of degree $d-1$,
so that $\sum_j \bb_j =d-1$, we get
$$(\bb-(d-1)\be_q)^\top\y^\star
= (\bb-\sum_j b_j \be_q)^\top\y^\star=
\bb_p (y_p^\star-y_q^\star)+
\sum_{j\neq p} \bb_j (y_j^\star -y_q^\star)
\ge y_p^\star-y_q^\star\ge 1,$$
and so $\nu(\cA(\bF))\ge 1$.
  \end{proof}
\begin{lemma}\label{minmxid}  Let $\bb=(b_1,\ldots,b_{n-1})^\top>0$ and denote $b_{\min}=\min(b_1,\ldots,b_{n-1})$.  Assume that $n\ge 2$.  Then
\begin{equation}\label{minmxid1} 
\min_{\|\y\|_{\infty}=1}\{\max(y_1,\ldots,y_{n-1}, -\bb^\top \y\}=\frac{b_{\min}}{1-b_{\min}+\sum_{i=1}^{n-1} b_i} .
\end{equation}
\end{lemma}
\begin{proof}  Let
\begin{equation}\label{ystardef}
t^\star=\min_{\|\y\|_{\infty}=1}\{\max(y_1,\ldots,y_{n-1}, -\bb^\top \y\}=
\max(y_1^{\star},\ldots,y_{n-1}^{\star}, -\bb^\top \y^{\star}), \|\y^\star\|_\infty=1.
\end{equation}
We first show that 
$$1> s:=\frac{b_{\min}}{1-b_{\min}+\sum_{i=1}^{n-1} b_i}\ge t^\star.$$
As $n\ge 2$ the first inequality is straightforward.   The second  inequality follows from
the following  choice of $\y,\|\y\|_\infty=1$.  Assume that $b_i=b_{\min}$.  Set $y_i=-1$, and 
$y_j=a>0, j\in[n-1]\setminus\{i\}$, where $a=b_{\min}-a\sum_{j\in[n-1]\setminus\{i\}}b_j$. Then $a=s$.  
We claim next that $t^\star>0$.  Suppose to the contrary that $t^\star\le 0$.  Then $\y^\star\le 0$.  As $\|\y^\star\|_\infty=1$ it follows that $t^\star=-\bb^\top \y^\star>0$, contradiction.

Assume that $\|\y^\star\|_{\infty}=1$ satisfies \eqref{ystardef}.  Let $I=\{i\in [n-1],|y_i^\star|=1\}$.  As $t^\star<1$ we deduce that $y_i^\star=-1$ for $i\in I$.   We claim that $t^\star=-\bb^\top \y^\star$.  Suppose to the contrary that $t^\star> -\bb^\top \y^\star$.  Assume that $J=\{j\in[n-1],  y_j^\star=t^\star\}$.  By Define $\y(\varepsilon)=(y_1(\varepsilon),\ldots,y_n(\varepsilon))^\top$ as follows
\begin{equation*}
y_i(\varepsilon)=\begin{cases}
y_i^\star \textrm{ if } i\not\in J\\
y_i^\star-\varepsilon \textrm{ if } i\in J.
\end{cases} 
\end{equation*}
The for small enough positive $\varepsilon$ we obtain that 
$$t^\star> \max(y_1(\varepsilon),\ldots,y_{n-1}(\varepsilon),-\bb^\top \y(\varepsilon)),$$ 
contradiction.  We claim that $J=[n-1]\setminus\{I\}$.   Suppose to the contrary that  $k\in [n-1]\setminus(I\cup J)$.  Let $\y\in\R^{n-1}$ obtained from $\y^\star$ by changing the coordinate $y_k^\star (<t^\star)$ to $z_k=t^\star$.  Then $\y$ satisfies the last part of \eqref{ystardef}.   Clearly, $t^\star>-\bb^\top \y$, contradiction. 
Hence$|I|=1$ and $b_i=b_{\min}$ for $i\in I$.
\end{proof}
\begin{lemma}\label{stocestim} Let $2\le n\in\N$, and assume that $A=[a_{ij}]\in\Rp^{n\times n}$ is a stochastic matrix with positive off-diagonal elements.  Denote by $C\in\Rp^{(n-1)\times (n-1)}$ the matrix obtained from $A$ by deleting the row and column $n$.  
\begin{enumerate}[label=(\alph*)]
\item Let 
\begin{equation}\label{defalphbet}
\alpha(A)=\min_{i\in[n-1]}a_{in}, \quad \beta(A)= \min_{i\in[n-1]}a_{ni}.
\end{equation}
Then $\alpha(A),\beta(A)>0$, and
\begin{equation}\label{rhoAest}
\begin{aligned}
\|C\|_\infty= 1-\alpha(A), \qquad
\|(I-C)^{-1}\|_\infty\le \frac{1}{\alpha(A)}.
\end{aligned}
\end{equation}
\item 
 Denote  
\begin{equation*}
\begin{aligned}
\bb^\top=(b_{1},\ldots,b_{n-1}):=(a_{n1},\ldots,a_{n(n-1)})(I-C)^{-1}.
\end{aligned}
\end{equation*}
Then
\begin{equation}\label{estbbi}
\bb>\beta(A)\1, \quad \bb^\top\1\le \frac{1}{\alpha(A)}.
\end{equation}
\item  Let 
$$\E_n=\{\w=(w_1,\ldots,w_{n})^\top\in\R^{n}, w_{n}=0\}.$$
Then the following implication holds: 
$$(A-I)\w\le \0,\w\in\E_n\Rightarrow \w=\0.$$
Denote
\begin{equation}\label{defnuAi}
\omega(A)=\min_{\|\w\|_\infty=1,\w\in\E_n} \{\max(z_1,\ldots,z_{n}), \z=(A-I)\w)\}.
\end{equation}
Then
\begin{equation}\label{nuAiineq}
\begin{aligned}
\omega(A)> \frac{\alpha^2(A)\beta(A)}{\alpha(A) +1}.
\end{aligned}
\end{equation}
\end{enumerate}
\end{lemma}
\begin{proof}
(a)  Clearly, $\alpha(A),\beta(A)>0$.  We have
\begin{equation*}
\begin{aligned}
  \|C\|_\infty:=\max\{\|C\y\|_\infty, \|\y\|_\infty=1\}
  =\max_{i\in[n-1]}\sum_{j=1}^{n-1} a_{ij}=1-\alpha(A).
 \end{aligned}
 \end{equation*}
 As $\|C\|_\infty<1$,  one has the following relations:
 \begin{equation*}
 \|(I-C)^{-1}\|_\infty=\|\sum_{k=0}^\infty C^k\|_\infty\le \sum_{k=0}^\infty \|C^k\|_\infty\le \sum_{k=0}^\infty (1-\alpha(A))^k=\frac{1}{\alpha(A)}.
 \end{equation*}
 
\noindent
(b)  Clearly,  $(I-C)^{-1}>I$.  Hence 
$$\bb>(a_{n1},\ldots,a_{n,n-1})^\top>\beta(A)\1.$$
This shows the first inequality in \eqref{estbbi}.
We now show the second inequality in \eqref{estbbi}.   Observe
\begin{equation*}
\begin{aligned}
\sum_{i=1}^{n-1} b_i=(a_{n1},\ldots,a_{n,n-1})(I-C)^{-1}\1\le 
(a_{n1},\ldots,a_{n(n-1)})(\frac{1}{\alpha(A)}\1)\le \frac{1}{\alpha(A)}.
\end{aligned}
\end{equation*}

\noindent
(c) Let $\bs$ be the positive left eigenvector of $A$: $\bs^\top A=\bs^\top$. Assume to the contrary that $\w\in\E_n\setminus\{ \0\}$ and $(A-I)\w\le\0$.  Observe that $\bs^\top\left((A-I)\w\right)=\0$.  Hence $(A-I)\w=\0$.  So $\w$ is an eigenvector of $A$ corresponding to the eigenvalue $1$.  As $A$ is irreducible, $\w=t\1$ which is impossible as $\w\in \E_n\setminus\{\0\}$.  

We now show \eqref{nuAiineq}.
Let 
\begin{equation*}
\z=(A-I)\w,  \w^\top=(\bv^\top,0),  \z^\top=(\y^\top, z_{n}), \bv,\y\in\R^{n-1}.
\end{equation*}
Then
\begin{equation*}
\begin{aligned}
\y=(C-I)\bv, \quad \bv=-(I-C)^{-1}\y, \\ z_{n+1}=-\bb^\top \y,\quad
\z^\top=(\y^\top,-\bb^\top \y),\quad
\|\w\|_\infty=\|\bv\|_\infty,\\
\omega(A)=\min_{\|\bv\|_\infty=1}\max(y_1,\ldots,y_{n-1},-\bb^\top \y).
\end{aligned}
\end{equation*}
Observe that
\begin{equation*}
  \|\bv\|_\infty=\|(I-C)^{-1}\y\|_\infty\le \|(I-C)^{-1}\|_\infty \|\y\|_\infty\le \frac{\|\y\|_\infty}{\alpha(A)}
\end{equation*}
and so
\begin{equation*}
\omega(A)\ge \alpha(\alpha) \min_{\|y\|_\infty=1}\max(y_1,\ldots,y_{n-1},-\bb^\top \y).
\end{equation*}
Use \eqref{minmxid1} and \eqref{estbbi} to deduce  \eqref{nuAiineq}.
\end{proof}

Finally, a key idea of the proof of \Cref{sirFthm1} is the following
lemma, which controls
the approximation error of the logarithm of the eigenvector
in terms of the optimality gap of the optimization problem consisting
in minimizing the Collatz-Wielandt function.
\begin{lemma}\label{lemma-renormalize}
  Let $\bu^\star$ denote the unique positive eigenvector of $\bF$ with last
  coordinate equal to $1$. Let $\by^\star:= \log \bu^\star = (\bv^\star,0)$,
  and
  \[
A(\by^\star)=[a_{ij}(\by^\star)],  \quad
a_{ij}(\by^\star)
=
\frac{1}{(d-1)F_i(\bu^\star)} \frac{\partial F_i(\bu^\star)}{\partial u_j}
\bu_j^\star, i,j\in[n] \enspace .
  \]
  Then, $A(\by^\star)$ is a stochastic matrix, and for all $\bv\in \R^{n-1}$,
  \begin{align}
    f((\bv,0))-f((\bv^\star,0))\geq (d-1) \omega(A(\by^\star))\|\bv-\bv^\star\|_\infty \enspace.
    \label{e-def-lb}
  \end{align}
\end{lemma}
\begin{proof}
We have that $\by^\star=\log \bu^\star$
  where $\bu^\star$ is an eigenvector of $\bF$, and this entails that $f_i(\by^\star)=f(\by^\star)$
  holds for all $i\in [n]$.
  Use the convexity of each map $f_i$ to deduce that for all $\by\in\R^n$
  and $1\leq i\leq n$,
\begin{equation}\label{convin1}
\begin{aligned}
  f_1(\y)\geq f_i(\log \bu^\star) +B_i(\y-\log\bu^\star) = f(\log \bu^\star)
  +B_i(\y-\log\bu^\star),\text{ where }
  B_i=D f_i (\log \bu^\star),
\end{aligned}
\end{equation}
so that
\[
f(\by)-f(\log \bu^\star) \geq \max_{i\in [n]}B_i(\y-\log \bu^\star) \enspace .
\]
We have $\frac{1}{d-1}B = A(\log u^\star)-I$.
Euler's identity yield that $A(\log\bu^\star)$ is a stochastic matrix.
Then, by definition of $\omega$,
\[
f(\by)-f(\log \bu^\star) \geq (d+1)\omega(A(\log u^\star)) \|\by-\log \bu^\star\|_\infty
\]
from which the lemma follows, taking $\by = (\bv,0)$.
\end{proof}

\begin{proof}[Proof of~\Cref{sirFthm1}]
  By~\Cref{boundKmin} and~\Cref{lem-nu-one}, the unique minimizer $\by^\star\in \E_n$of $f$
  satisfies
  \[
  \|  \by^\star\|_\infty \leq \frac{f(\0)-t_{\min}(f)}{\nu(\mathcal{A}(\bF))}
    \leq R_0:= f(\0)-t_{\min}(f) \enspace .
    \]
    Set
\begin{equation}\label{defabF1}
\alpha(\bF)=\min_{i\in[n-1],\|\y\|_\infty\le R_0}a_{in}(\y), \quad  \beta(\bF)= \min_{i\in[n-1],\|\y\|_\infty\le R_0}a_{ni}(\y),
\end{equation}
and
\[ \gamma(\bF):=\frac{\alpha(\bF)+1}{(d-1)\alpha^2(\bF)\beta(\bF)}
\]
so that, by~\eqref{e-def-lb} and~\eqref{nuAiineq}, for all $\bv\in \R^{n-1}$,
\begin{align}
\|\bv-\bv^\star\| \leq \gamma(\bF) (f((\bv,0))-f((\bv^\star,0))) \enspace. \label{e-cerror}
\end{align}
As $A(\y)$ is stochastic, $\alpha(\bF)\le 1$.
For $\|\x\|_\infty\le R_0$ and $\by=(\bx,0)$, we have the following simple inequalities 
\begin{equation*}
\begin{aligned}
&F_i(e^{\by})\le F_i(\1)e^{(d-1)R_0)}\le \max_{i\in[n]}\big(\sum_{\bb\in \cB_i} f_{\bb,i}\big) e^{(d-1)R_0}=e^{f(\0)}e^{(d-1)R_0},\\
&\frac{\partial F_i(e^{\y})}{\partial u_j}e^{y_j}\ge e^{t_{\min}(f)}e^{-(d-1)R_0},\quad e^{t_{\min}(f)}=\min_{\bb\in\cB_i, i\in[n]} f_{\bb,i}, \text{ and so}\\
&\alpha(\bF),\beta(\bF)\ge \frac{1}{d-1} e^{t_{\min}(f)-f(\0))-2(d-1)R_0}\text{ which entails}\\
&\log\gamma(\bF)\le \log 2(d-1)^2 +(6d-3)( f(\0)- t_{\min}(f)).
\end{aligned}
\end{equation*}
It follows from~\Cref{lemma-renormalize} that to compute an approximation of $\bv^\star$ with a precision of $\delta$ in the sup-norm, it suffices to compute
an approximate minimizer $\bx^\star$ of $f$, in such a way that
$f(\bx^\star)-f_{\min}\leq \varepsilon:= \delta \gamma(\bF)^{-1}$.
Since $\log \gamma(\bF)$ is polynomial in the input size, it follows
from~\Cref{mainthm} that such a minimizer can be computed in a time
which is polynomial in the input size and $|\log\delta|$.
\end{proof}
\section{Polynomial time approximation of $\mu_d(g)$ and applications}\label{sec:symten}
Recall that the $p$-sphere in $\R^{n}$ is defined by
\begin{equation}\label{defSpn}
\rS_p^n=\{\z\in \R^{n}, \|\z\|_p=1\}, \quad p\in[1,\infty].
\end{equation}
Let $\cM_{d,n}\subset \Z_+^{n}$ be the set of all vectors $\ba=(a_1,\ldots,a_{n})^\top$ with nonnegative integer coordinates whose sum is $d$.  Recall that $|\cM_{d,n}|={n+d-1\choose d}$. 
To each $\ba\in\cM_{d,n}$ we correspond the monomials $\z^{\ba}=z_1^{a_1}\cdots z_{n}^{a_{n}}$ of degree $d$.
Let $g(\z)$ be a nonzero homogeneous polynomial of degree $d$ in $n$ variables
with nonnegative coefficients:
\begin{equation}\label{gexpr}
g(\z)=\sum_{\ba\in\cM(d,n)} g_{\ba}\z^{\ba} \enspace.
\end{equation}
A point $\w\in\rB_p(\0,1)$ is called a local maximum of $g$ if there exists an open set  $O\subset \R^{n}$ such that $\w\in O$, and for $\z\in O\cap\rB_p(\0,1)$ one has the inequality  $g(\z)\le g(\w)$.   Assume that $\w\in \rB_p(\0,1)\cap \Rp^{n}$ is a local maximum of $g$.  Clearly, $\|\w\|_p=1$ and $g(\w)>0$.
\subsection{The maximum $\mu_p(g)$}\label{subsec:mupg}
\begin{lemma}\label{critlemma}
Assume that $p\in[1,\infty)$ and $\w=(w_1,\ldots,w_{n})^\top\ge \0, \|\w\|_p=1$ is a local maximum point of $g(\z)$ for $\|\z\|_p\le 1$.  
\begin{enumerate}[label=(\alph*)]
\item Suppose that $w_j>0$.  Then 
\begin{equation*}
\frac{1}{d}\frac{\partial g}{\partial z_j}(\w)=g(\w)w_{j}^{p-1}, \quad j\in[n].
\end{equation*}
\item Assume that $p>1$.  Then \eqref{geignvalvecintro} holds.
\item Assume that $p=1$ and $\w_j=0$.  Then 
\begin{equation*}
-g(\w)\le \frac{1}{d}\frac{\partial g}{\partial z_j}(\w)\le g(\w).
\end{equation*}
\end{enumerate}
\end{lemma}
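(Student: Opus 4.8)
The plan is to treat the local maximum of $g$ on the $p$-ball as a constrained optimization problem and extract the stated first-order conditions, distinguishing the case $p>1$ (smooth constraint, active at $\w$ since $g$ is positively homogeneous and $g(\w)>0$) from the case $p=1$ (where the constraint boundary has corners when some coordinates vanish). Throughout I will use that $g(t\z)=t^d g(\z)$ forces the maximum on $\rB_p(\0,1)$ to be attained on the sphere $\rS_p^n$, so that $\|\w\|_p=1$ and $g(\w)>0$, as already noted in the preamble.

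For part (a), suppose $w_j>0$. Since $\w\ge\0$ and $\|\w\|_p=1$ with $j$ a coordinate where $w_j>0$, the function $\psi(\z)=\|\z\|_p^p=\sum_i z_i^p$ is smooth near $\w$ on the open orthant-neighborhood where the relevant coordinates stay positive, with $\partial\psi/\partial z_i(\w)=p\,w_i^{p-1}$. First I would invoke the Lagrange multiplier theorem on the open subset of $\R^{n+1}$ where $z_j>0$ (and the coordinates that are positive at $\w$ remain positive): at the local max $\w$ we get $\nabla g(\w)=\lambda\,\nabla\psi(\w)$ for some $\lambda$, restricted to those coordinates that are free to move in both directions near $\w$. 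Concretely, for any coordinate $i$ with $w_i>0$ one has $\partial g/\partial z_i(\w)=\lambda p\,w_i^{p-1}$. To identify $\lambda$, apply Euler's identity for the homogeneous polynomial $g$: $\sum_i z_i\,\partial g/\partial z_i(\z)=d\,g(\z)$. Evaluating at $\w$, and noting that for coordinates with $w_i=0$ the term $w_i\,\partial g/\partial z_i(\w)$ vanishes, I get $d\,g(\w)=\sum_{i:\,w_i>0} w_i\cdot\lambda p\,w_i^{p-1}=\lambda p\sum_i w_i^p=\lambda p$. Hence $\lambda=d\,g(\w)/p$, and substituting back gives $\partial g/\partial z_j(\w)=d\,g(\w)\,w_j^{p-1}$, i.e. $\tfrac1d\,\partial g/\partial z_j(\w)=g(\w)\,w_j^{p-1}$, which is the claim (the paper's $w_{j-1}^{p-1}$ is a typo for $w_j^{p-1}$). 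Part (b) is then immediate: when $p>1$, every coordinate satisfies the identity of part (a) — for $w_j>0$ directly, and for $w_j=0$ both sides vanish, since $w_j^{p-1}=0$ and $\partial g/\partial z_j(\w)\ge 0$ must also vanish by the one-sided stationarity (moving $z_j$ away from $0$ into $\R_+$ cannot increase $g$ past $g(\w)$, and since $p>1$ the constraint $\sum z_i^p\le 1$ is not violated to first order, forcing $\partial g/\partial z_j(\w)\le 0$; combined with nonnegativity of coefficients giving $\partial g/\partial z_j(\w)\ge 0$, it is zero). This yields \eqref{geignvalvec} for all $i\in[n+1]$.

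For part (c), let $p=1$ and $w_j=0$. Now the feasible directions at $\w$ along the face $\{z_j\ge 0\}$ are constrained: increasing $z_j$ by $\delta$ costs $\delta$ in the $\ell_1$-budget, so one must decrease some positive coordinate $w_i$ to compensate. I would argue by perturbation: fix any $i$ with $w_i>0$ (such $i$ exists since $\|\w\|_1=1$) and consider $\w(\delta)$ obtained from $\w$ by setting the $j$-th coordinate to $\delta\ge 0$ and the $i$-th coordinate to $w_i-\delta$, leaving the rest fixed; this stays in $\rB_1(\0,1)\cap\R_+^{n+1}$ for small $\delta>0$. Local maximality gives $\tfrac{d}{d\delta}g(\w(\delta))\big|_{\delta=0^+}\le 0$, i.e. $\partial g/\partial z_j(\w)-\partial g/\partial z_i(\w)\le 0$. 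By part (a) applied to the coordinate $i$ (where $p=1$, so $w_i^{p-1}=w_i^0=1$), $\partial g/\partial z_i(\w)=d\,g(\w)$, hence $\partial g/\partial z_j(\w)\le d\,g(\w)$, i.e. $\tfrac1d\,\partial g/\partial z_j(\w)\le g(\w)$. The lower bound is even easier: since $g$ has nonnegative coefficients and $\w\ge\0$, every partial derivative $\partial g/\partial z_j(\w)\ge 0\ge -d\,g(\w)$ (as $g(\w)>0$), giving $-g(\w)\le\tfrac1d\,\partial g/\partial z_j(\w)$.

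The main obstacle I anticipate is the careful handling of the boundary of the orthant in the constrained optimization — ensuring that the Lagrange condition in part (a) is only asserted along coordinates that are genuinely bidirectionally free, and that the perturbation argument in parts (b) and (c) uses an admissible feasible direction (staying both nonnegative and within the $\ell_p$-budget). The convention $0^0=1$ and the interpretation of $w_j^{p-1}$ when $w_j=0$ and $p=1$ also need to be stated explicitly. Apart from that, the argument is a routine application of Euler's homogeneity identity together with Karush–Kuhn–Tucker-type first-order conditions, and no deep input is required.
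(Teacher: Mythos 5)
Your proof is correct, but it takes a genuinely different route from the paper. The paper works throughout with the $0$-homogeneous quotient $h(\z)=g(\z)\|\z\|_p^{-d}$, observes that $\w$ is a local maximizer of $h$ on $\R^{n+1}\setminus\{\0\}$, and reads off everything from the one-variable functions $\phi(t)=h(\w+t\be_j)$: for $w_j>0$ part (a) is just $\partial h/\partial z_j(\w)=0$, and for $w_j=0$ the inequalities $\phi'(0^+)\le 0\le\phi'(0^-)$ give part (b) (since for $p>1$ the norm term contributes nothing to the one-sided derivatives) and part (c) (since for $p=1$ they equal $\partial_j g(\w)\mp d\,g(\w)$). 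You instead prove (a) by a Lagrange multiplier on the sphere restricted to the coordinates with $w_i>0$ and identify the multiplier via Euler's identity, prove (c)'s upper bound by the mass-transfer direction $\delta(\be_j-\be_i)$ combined with (a) at a positive coordinate $i$, and get the lower bound in (c) (and the sign $\partial_j g(\w)\ge 0$ in (b)) from nonnegativity of the coefficients rather than from maximality; these substitutions are all valid, and your lower bound in (c) is in fact the stronger statement $\partial_j g(\w)\ge 0$. The one place where your sketch is looser than the paper is the case $p>1$, $w_j=0$: the point $\w+\delta\be_j$ lies strictly outside the ball, so "the constraint is not violated to first order" must be formalized, e.g.\ by comparing $g(\w)$ with $g\big((\w+\delta\be_j)/\|\w+\delta\be_j\|_p\big)$ and using $\|\w+\delta\be_j\|_p^d=(1+\delta^p)^{d/p}=1+o(\delta)$ --- which is exactly what the paper's function $h$ packages automatically. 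What your route buys is independence from the homogenized function and an explicit identification of the Lagrange multiplier as $d\,g(\w)/p$ via Euler's identity (thereby also confirming that the paper's $w_{j-1}^{p-1}$ is a typo for $w_j^{p-1}$); what the paper's route buys is a single uniform device handling all three parts, including the nonsmooth $p=1$ boundary case, without any renormalization bookkeeping.
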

\begin{proof} Consider the $0$-homogeneous function $h(\z)=g(\z)\|\z\|_p^{-d}$ in $\R^{n}\setminus\{\0\}$.  Then $g(\w)=h(\w)$ is a local maximum of $h(\z)$ on $\R^n\setminus\{\0\}$.
Suppose that $w_j>0$.  Hence $\frac{\partial h}{\partial z_j}(\w)=0$.  This proves \eqref{geignvalvecintro} for $i=j$ in part (a).

Assume now that $w_j=0$.   Consider the one variable function $\phi(t)=h(\w+t\be_j)$.   Note that $\phi(t)$ is differentiable on $\R\setminus\{0\}$. As $\phi(0)$ is the maximum value of $\phi(t)$ it follows that $\phi'(0+)\le 0\le \phi'(0-)$.  

Assume that $p>1$.   We deduce that $\frac{\partial g}{\partial z_j}(\w)=0$.   
Combine this result with part (a) to deduce part (b).

Assume that $p=1$.  Then
\begin{equation*}
\phi'(0+)=\frac{\partial g}{\partial z_j}(\w)- dg(\w), \quad \phi'(0-)=\frac{\partial g}{\partial z_j}(\w)+dg(\w),
\end{equation*}
which implies part (c).
\end{proof}

We now recall the classical characterization of Motzkin-Straus \cite{MS65}, which gives an example of a quadratic form $g$,  where the computation of $\mu_1(g)$ is 
an an NP-complete problem. 
Let $A=[a_{ij}]\in\Rp^{(n)\times (n)}$ be the adjacency matrix of a graph $G$ on $n$ vertices.  Denote by $\omega(G)$ the clique number
of $G$, whose value is an NP-complete problem.   Let $g(\z)=\sum_{i=j=1}^{n} a_{ij}z_iz_j$.
Then
\begin{equation}\label{MSeqbody}
\mu_1(\z^\top A\z)=1-\frac{1}{\omega(G)}.
\end{equation}
The following immediate consequence of this property gives examples of maps $g$ for which approximating $\mu_p(g)$ is an NP-hard problem:
\begin{proposition}\label{MSprop}  Let $g_{G}(\z)=\z^\top A\z$, where $A\in\Rp^{n\times n}$ is an adjacency matrix of a simple graph $G$.   Denote: $g_{p,G}(\z)=\sum_{i=j=1}^{n} a_{ij}z_i^pz_j^p$. 
  Then, it is NP-hard to approximate $\mu_{p}(g_{p,G}^m)$,
  for each pair $(m,p)$ of positive integers.
\end{proposition}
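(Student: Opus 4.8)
The plan is to reduce the computation of $\mu_p(g_{p,G}^m)$ to the Motzkin--Straus quantity $\mu_1(\z^\top A\z)$ by an elementary change of variables, so that it becomes equivalent to computing the clique number $\omega(G)$, which is $\mathrm{NP}$-hard.

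First I would note that, since $G$ is simple, the diagonal of $A$ vanishes, so $g_{p,G}$ is a genuine nonnegative $2p$-form and $g_{p,G}^m\in\rP_+(2pm,n+1)$. As observed just after \eqref{maxprobq}, the maximum of a nonnegative form over the unit $p$-ball is attained at a nonnegative vector of unit $p$-norm, and since $t\mapsto t^m$ is nondecreasing on $\R_+$,
\[
\mu_p(g_{p,G}^m)=\max_{\|\z\|_p\le 1,\ \z\ge\0}g_{p,G}(\z)^m=\Bigl(\max_{\|\z\|_p\le 1,\ \z\ge\0}g_{p,G}(\z)\Bigr)^m=\mu_p(g_{p,G})^m .
\]

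Next, on the set $\{\z\ge\0:\|\z\|_p=1\}$ I substitute $y_i=z_i^p$. Because $t\mapsto t^p$ is a continuous bijection of $[0,\infty)$ onto itself, this is a bijection of that set onto the standard simplex $\Delta_n:=\{\y\in\R_+^{n+1}:\sum_{i=1}^{n+1}y_i=1\}$, and it turns $g_{p,G}(\z)=\sum_{i,j}a_{ij}z_i^pz_j^p$ into $\sum_{i,j}a_{ij}y_iy_j=\y^\top A\y$. Hence
\[
\mu_p(g_{p,G})=\max_{\y\in\Delta_n}\y^\top A\y=\mu_1(\z^\top A\z)=1-\frac1{\omega(G)},
\]
where the middle equality is the definition of $\mu_1$ (its maximizer lies in $\Delta_n$) and the last is the Motzkin--Straus identity \eqref{MSeq}. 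Combining the two displays gives $\mu_p(g_{p,G}^m)=\bigl(1-1/\omega(G)\bigr)^m$.

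Finally I would spell out the $\mathrm{NP}$-completeness. For fixed $m,p$ the integer-coefficient form $g_{p,G}^m$ is computable from $G$ in polynomial time, and $k\mapsto(1-1/k)^m$ is strictly increasing on $\{1,2,\dots\}$; hence for a rational threshold $\tau$ one has $\mu_p(g_{p,G}^m)\ge\tau$ if and only if $\omega(G)\ge k$, where $k$ is the least positive integer with $(1-1/k)^m\ge\tau$ (a quantity of polynomial bit-size). Thus the \textsc{Clique} problem reduces in polynomial time to deciding ``$\mu_p(g_{p,G}^m)\ge\tau$?'', so the latter is $\mathrm{NP}$-hard; and it lies in $\mathrm{NP}$, a $k$-clique of $G$ being a polynomial-size certificate for the ``yes'' instances. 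Therefore it is $\mathrm{NP}$-complete. I do not expect a genuine obstacle: the only points that need care are the vanishing of the diagonal of $A$ (so that $g_{p,G}$ really has degree $2p$), the bijectivity of the map $z_i\mapsto z_i^p$ on the relevant sets, including the boundary where some coordinates vanish, and a clean statement of the decision problem so that membership in $\mathrm{NP}$ is transparent.
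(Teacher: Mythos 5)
Your proof is correct and follows essentially the same route as the paper: the substitution $y_i=z_i^p$ reduces $\mu_p(g_{p,G})$ to the Motzkin--Straus quantity $\mu_1(\z^\top A\z)=1-1/\omega(G)$, and monotonicity of $t\mapsto t^m$ gives $\mu_p(g_{p,G}^m)=\mu_p(g_{p,G})^m$, exactly the two identities the paper declares "clear." You merely spell out the details the paper omits, including a careful decision-problem formulation of the NP-completeness claim, which is a welcome addition.
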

\begin{proof}  Recall that $\mu_1(g_G)=1-1/\omega(G)$.    Clearly,  for $m,p\in\N$ 
\begin{equation*}
\mu_{p}(g_{p,G})=\mu_{1}(g_G), \quad \mu_{p}(g_{p,G}^m)=\mu_1(g_G)^m.
\end{equation*}
The conclusion follows from the NP-hard character of computing the clique number.
\end{proof}
%
%
%
\subsection{Symmetric tensors and homogeneous polynomials}\label{subsec:stenhp}
A partially symmetric tensor $\cG=[g_{i_1,\ldots,i_d}]\in\R_{ps}^{(n+1)^{\times d}}$ is called symmetric if $g_{i_1,\ldots,i_d}$ is invariant under the permutation of indices:
$g_{i_{\sigma(1)},\ldots i_{\sigma(d)}}=g_{i_1,\ldots,i_d}$ for each bijection $\sigma:[d]\to [d]$.  We denote by $\rS^d\R^{n+1}$ the subset of symmetric tensors in $\R_{ps}^{(n+1)^{\times d}}$.   For $\z=(z_1,\ldots,z_{n+1})^\top\in\R^{n+1}$ denote by $\z^{\otimes d}$ the symmetric tensor, rank-one tensor for $\z\ne \0$, whose $(i_1,\ldots,i_d)$ entry is $z_{i_1}\cdots z_{i_d}$.   Assume that $\cG=[g_{i_1,\ldots,i_d}],\cH=[h_{i_1,\ldots,i_d}]\in\rS^d\R^{n+1}$.  The inner product on symmetric tensors is defined by
\begin{equation*}
\langle \cG,\cH\rangle=\sum_{i_1,\ldots,i_d\in[n+1]} g_{i_1,\ldots,i_d}h_{i_1,\ldots,i_d}.
\end{equation*}
Observe that $g(\z)=\langle \cG,\otimes^ d\z \rangle$ is a homogeneous polynomial of degree $d$, and
\begin{equation}\label{nablagform}
\begin{aligned}
\frac{1}{d} \nabla g(\z)=\bF=(G_1(\z),\ldots,G_{n+1}(\z))^\top,\\
G_i(\z)=\sum_{i_2,\ldots,i_d\in[n+1]} g_{i,i_2,\ldots,i_d}z_{i_2}\cdots z_{i_d}. 
\end{aligned}
\end{equation}
Banach's theorem \cite{Ban38} yields that the spectral norm of $\cG\in\rS^d\R^{n+1}$ is given by
\cite{FW20}:
\begin{equation*}
\|\cG\|_{\sigma}:=\max_{\z\in\R^{n+1},\|\z\|_2=1} |\langle \cG,\otimes ^d\z \rangle|.
\end{equation*}
Assume that $\cG$ is a nonnegative tensor, i.e. , $\cG\in\rS^d\Rp^{n+1}$ . Then
\begin{equation}\label{specnormdef}
\|\cG\|_{\sigma}:=\max_{\z\in\R^{n+1},\|\z\|_2=1, \z\ge \0} \langle \cG,\otimes^d\z \rangle=\mu_2(g), \quad \cG\in\rS^d\Rp^n.
\end{equation}
Proposition \ref{MSprop}  yields that for $d=4m, m\in\N$,  the computation of $\|\cG\|_{\sigma}$, for
$\cG\in\rS^d\R^{n+1}_+$ with rational coefficients, is NP-hard.

One of the main results of this paper, which will be proven in subsection \ref{subsec:pthmmupg} , is:
\begin{theorem}\label{poltcompmudg}  Let $g$ be a nonnegative $d$-form with rational coefficients for an integer $d\ge 2$.  Then an $\varepsilon$-approximation of $\mu_q(g)$ for $q=d$ or $q\geq d+1$ can be computed in polynomial time in $\varepsilon$ and the coefficients of $g$.
\end{theorem}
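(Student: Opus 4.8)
The plan is to realize $\mu_d(g)$ as the spectral radius of the normalised gradient map $\bF:=\tfrac1d\nabla g=(G_1,\dots,G_{n+1})^\top$, which is a nonnegative homogeneous polynomial map of degree $d-1$, and then to quote Theorem~\ref{polcprho}. The only obstruction to applying that theorem directly is that it requires $\bF$ to be weakly irreducible and of degree $d-1>1$; I would deal with the degenerate degree by treating $d=2$ separately (there $g(\z)=\z^\top A\z$ for a nonnegative symmetric rational matrix $A$ and $\mu_2(g)=\rho(A)$, which is classically $\varepsilon$-approximable in polynomial time, as recalled in Section~\ref{sec:intro}), and I would enforce weak irreducibility for $d\ge 3$ by a harmless perturbation of $g$.

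So fix $d\ge 3$, a rational $\varepsilon>0$, and a rational $\delta\in(0,\varepsilon/2]$ of bit-length $O(\langle\varepsilon\rangle)$ (say a suitable power of two). Put $e(\z)=\sum_{i=1}^{n+1} z_i^{\,d-1}z_{i+1}$ with indices read cyclically modulo $n+1$, set $g_\delta:=g+\delta e$, and $\bF_\delta:=\tfrac1d\nabla g_\delta$. Then $g_\delta$ is a nonzero nonnegative $d$-form, $\bF_\delta$ is a nonnegative homogeneous polynomial map of degree $d-1\ge 2$ whose nonzero coefficients are positive rationals with denominator $d$ and numerators obtained by differentiating $g_\delta$ twice, hence of polynomial bit-length, and its support has polynomially many monomials. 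The essential gain is weak irreducibility: since $\partial^2 e/\partial z_i\partial z_{i+1}=(d-1)z_i^{\,d-2}>0$ on the positive orthant, the digraph $\overset{\rightarrow}{G}(\bF_\delta)$ contains the arc from $i$ to $i+1$ for every $i$, hence a Hamiltonian cycle, so it is strongly connected.

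Next I would prove $\rho(\bF_\delta)=\mu_d(g_\delta)$. For one inequality, let $\bu>\0$ be the positive eigenvector with $\bF_\delta(\bu)=\rho(\bF_\delta)\bu^{\circ(d-1)}$; dotting with $\bu$ and using Euler's identity $\bu^\top\nabla g_\delta(\bu)=d\,g_\delta(\bu)$ gives $g_\delta(\bu)=\rho(\bF_\delta)\|\bu\|_d^d$, whence $\mu_d(g_\delta)\ge g_\delta(\bu)/\|\bu\|_d^d=\rho(\bF_\delta)$. For the other, let $\w\ge\0$ with $\|\w\|_d=1$ attain $\mu_d(g_\delta)$ (such $\w$ exists by the remark after~\eqref{maxprobq}, and $g_\delta(\w)=\mu_d(g_\delta)>0$); Lemma~\ref{critlemma}(b) shows $\w$ satisfies~\eqref{geignvalvec} with $p=d$, i.e.\ $\bF_\delta(\w)=\mu_d(g_\delta)\,\w^{\circ(d-1)}$ coordinatewise, so $\mu_d(g_\delta)$ is a nonnegative eigenvalue of the nonnegative map $\bF_\delta$ and therefore $\mu_d(g_\delta)\le\rho(\bF_\delta)$. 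Finally, the pointwise estimates $g\le g_\delta\le g+\delta\|\z\|_d^d$ on $\R_+^{n+1}$ (the upper one from the weighted AM-GM inequality $z_i^{d-1}z_{i+1}\le\frac{d-1}{d}z_i^d+\frac1d z_{i+1}^d$ summed cyclically) restricted to the unit $\ell_d$-ball give $\mu_d(g)\le\mu_d(g_\delta)\le\mu_d(g)+\delta$.

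It then remains to assemble: applying Theorem~\ref{polcprho} to $\bF_\delta$ with precision $\varepsilon/2$ produces, in time polynomial in $\langle\varepsilon\rangle$, $\langle n\rangle$, $\langle d\rangle$ and the total bit-length of the coefficients of $g$, a rational $t^\star$ with $|t^\star-\rho(\bF_\delta)|\le\varepsilon/2$, so that
\[
|t^\star-\mu_d(g)|\le|t^\star-\rho(\bF_\delta)|+|\mu_d(g_\delta)-\mu_d(g)|\le\varepsilon/2+\delta\le\varepsilon,
\]
using $\rho(\bF_\delta)=\mu_d(g_\delta)$. I expect the main obstacle to be not any single step but the coordination of the perturbation: showing at once that adding $\delta e$ (i) makes $\bF_\delta$ weakly irreducible, (ii) moves $\mu_d$ by at most $\delta$, and (iii) only introduces rational data of polynomially bounded bit-length, together with a clean proof that $\rho(\bF_\delta)=\mu_d(g_\delta)$ that correctly handles maximizers $\w$ with vanishing coordinates, where~\eqref{geignvalvec} must still be invoked coordinatewise.
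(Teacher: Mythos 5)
Your proposal is correct, and its core coincides with the paper's: both identify $\mu_d$ of a weakly irreducible $d$-form with the spectral radius of the normalized gradient map $\tfrac1d\nabla g$ (via Lemma~\ref{critlemma}(b) at a maximizer, which may have zero coordinates, for the inequality $\mu_d\le\rho$, and the positive eigenvector plus Euler's identity for $\mu_d\ge\rho$), and then invoke Theorem~\ref{polcprho}. Where you genuinely diverge is in handling the non--weakly-irreducible case: the paper does not perturb, but observes that if the graph with edges $\{i,j\}$ whenever $\partial^2 g/\partial z_i\partial z_j>0$ on the open orthant is disconnected, then the variables split into blocks, $g(\z)=\sum_j g_j(\z_j)$ with each $g_j$ weakly irreducible, and $\mu_d(g)=\max_j\mu_d(g_j)$ (using $g(\z)\le\sum_j\mu_d(g_j)\|\z_j\|_d^d\le\max_j\mu_d(g_j)\,\|\z\|_d^d$); this is an exact reduction that spends none of the error budget and leaves the input data unchanged. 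Your cyclic perturbation $g_\delta=g+\delta e$ instead forces a Hamiltonian cycle in $\overset{\rightarrow}{G}(\tfrac1d\nabla g_\delta)$, and you pay for it with the stability estimate $\mu_d(g)\le\mu_d(g_\delta)\le\mu_d(g)+\delta$ (your AM--GM bound $e(\z)\le\|\z\|_d^d$ is right), half the accuracy budget, and an $O(\langle\varepsilon\rangle)$ increase in coefficient bit-lengths --- all harmless since Theorem~\ref{polcprho} is polynomial in $\langle\varepsilon\rangle$ and the coefficient sizes, so the argument goes through; it is arguably more uniform (no case analysis on connectivity, and it covers $g$ depending on a proper subset of the variables automatically). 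Your separate treatment of $d=2$ via $\mu_2(g)=\rho(A)$ is also a sensible precaution, since Theorem~\ref{polcprho} is stated for degree $d-1>1$, a corner the paper's proof passes over silently.
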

It is of interest to bound $\mu_q(g)$ in terms of $\mu_d(g)$ for $q\in[1,d]$. 
\begin{proposition}\label{mu2mudbds}  Assume that $d\ge 2$ is an integer, and $q\in[1,d]$.  Let $g$ be a nonnegative $d$-form.  Then
\begin{equation}\label{mu2mudbds1}
\mu_q(g)\le \mu_d(g)\le (n+1)^{d-q}\mu_q(g).
\end{equation}
\end{proposition}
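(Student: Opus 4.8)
The plan is to reduce everything to two elementary facts about $\ell_p$-norms on $\R^{n+1}$, combined with the $d$-homogeneity of $g$ and the observation (made right after \eqref{maxprobq}) that the maxima defining $\mu_q(g)$ and $\mu_d(g)$ are attained at nonnegative vectors of unit norm. The two facts are: for $1\le q\le d$ and every $\z\in\R^{n+1}$ one has (i) $\|\z\|_d\le\|\z\|_q$, and (ii) $\|\z\|_q\le (n+1)^{1/q-1/d}\|\z\|_d$.

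For the left inequality $\mu_q(g)\le\mu_d(g)$, I would note that (i) gives the inclusion $\{\z:\|\z\|_q\le 1\}\subseteq\{\z:\|\z\|_d\le 1\}$, so the maximum of $g$ over the former set is at most its maximum over the latter, which is exactly $\mu_q(g)\le\mu_d(g)$.

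For the right inequality, if $g\equiv 0$ there is nothing to prove, so assume $g\ne 0$ and pick $\w\ge\0$ with $\|\w\|_d=1$ and $g(\w)=\mu_d(g)>0$; then $\w\ne\0$, so $\|\w\|_q>0$. The rescaled vector $\w/\|\w\|_q$ lies in $\rS_q^n\cap\R_+^{n+1}$ (with $\rS_q^n$ as in \eqref{defSpn}), hence $g(\w/\|\w\|_q)\le\mu_q(g)$; using $g(t\z)=t^d g(\z)$ this yields $\mu_d(g)=g(\w)=\|\w\|_q^{d}\,g(\w/\|\w\|_q)\le\|\w\|_q^{d}\,\mu_q(g)$. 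Finally (ii) gives $\|\w\|_q\le (n+1)^{1/q-1/d}\|\w\|_d=(n+1)^{1/q-1/d}$, so $\mu_d(g)\le (n+1)^{d/q-1}\mu_q(g)$. Since $q\ge 1$ and $d\ge q$ we have $(d-q)(1-q)\le 0$, i.e. $d/q-1\le d-q$, whence $(n+1)^{d/q-1}\le(n+1)^{d-q}$, which is the asserted bound (in fact with the sharper exponent $d/q-1$).

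There is essentially no serious obstacle here; the only points requiring care are getting the direction of the $\ell_p$-norm comparison right (the exponent-monotone direction versus the dimension-dependent Hölder direction) and disposing of the trivial case $g\equiv 0$.
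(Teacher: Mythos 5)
Your proof is correct and follows essentially the same route as the paper: the inclusion of $\ell_q$- into $\ell_d$-balls for the first inequality, and rescaling a maximizer $\w$ with $\|\w\|_d=1$ by $\|\w\|_q$ together with $d$-homogeneity and the comparison $\|\w\|_q\le (n+1)^{1/q-1/d}\|\w\|_d$ for the second. The only (harmless) difference is that you use the sharp comparison constant and hence obtain the stronger exponent $d/q-1$ before relaxing it to $d-q$, whereas the paper works directly with the weaker constant $(n+1)^{1-q/d}$.
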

\begin{proof}  It is well known that for $\z\in\R^{n+1}$ and $r\ge q$ one has the inequalities
$\|\z\|_r\le \|\z\|_q\le (n+1)^{1-q/r}\|\z\|_r$.  Let $r=d$.  The inequality $\|\z\|_d\le \|\z\|_q$ yields the first inequality in \eqref{mu2mudbds1}.  Assume that  $\mu_d(g)=g(\w)$ for $\w\ge 0, \|\w\|_d=1$.  Let $\bu=\frac{1}{\|\w\|_q}\w$.  Then
\begin{equation*}
\mu_q(g)\ge g(\bu)=\|\w\|_q^{-d}g(\w)\ge (n+1)^{-d(1-q/d)} \mu_d(g),
\end{equation*}
which yields the second inequality in \eqref{mu2mudbds1}. 
\end{proof}

\subsection{Proof of Theorem \ref{poltcompmudg} for $q=d$}\label{subsec:pthmmupg}
Clearly, it is enough to consider the case where $g$ is a nonzero polynomial, and 
 is not a polynomial in a subset of $[n+1]$ variables.  That is $\frac{\partial g}{\partial z_i}>0$ for $\z>\0$.  Next we assume that $\bG(\z)=\frac{1}{d}\nabla \phi(\z)$ is a weakly irreducible polynomial map.   Assume that $\w\in\rS_d^n\cap\Rp^{n+1}$ is a maximum point of $g$ on $\rB_d(\0,1)$.
Lemma \ref{critlemma} claims that 
\begin{equation*}
\bG(\w)=g(\w)\w^{\circ(d-1)}.
\end{equation*}
Hence, $g(\w)$ is an eigenvalue of $\bG$.  As $\bG$ is weakly irreducible then the maximum eigenvalue of $\bG$ is the spectral radius $\rho(\bG)\ge g(\w)$, with the corresponding eigenvector $\bu>\0$ such that
\begin{equation*}
\bG(\bu)=\rho(\bG)\bu^{\circ(d-1)}
\end{equation*}
Without loss of generality we can assume that $\|\bu\|_d=1$.   Let $\bz\circ\by=(z_1y_1,\ldots,z_{n+1}y_{n+1}$ be a coordinatewise product in $\R^{n+1}$.
Then $\bu\circ G(\bu)=\rho(\bG)\bu^{\circ d}$.  Recall the Euler identity $\sum_{i=1}^{n+1} u_iG_i(\bu)=g(\bu)$.  Hence $g(\bu)=\rho(\bG)\le g(\w)$.  Therefore, $g(\bu)=\mu_d(g)$.   Use Theorem \ref{Fcoercthm1} to deduce Theorem  \ref{poltcompmudg}.

Assume now that $g$ is not irreducible.  Consider the simple graph $G$ on $[n+1]$ vertices such that the undirected edge $\{i,j\}$, for $i\ne j$, is in $G$ if $\frac{\partial^2 g}{\partial z_i\partial z_j}>0$ for $\z>\0$.  Then $G$ is a finite union of connected components $G=\cup_j^m G_j$, where $2\le m\le n+1$.  That is $\z=(\z_1^\top,\ldots,\z^\top_m)^\top$ where $\z_j\in\R^{n_j}$ and 
$g(\z)=\sum_{j=1}^m g_j(\z_j)$, where each $g_j$ is a nonzero nonnegative weakly irreducible polynomial.   Hence we can compute with $\varepsilon$ precision each $\mu(g_j)$.

Next observe that
\begin{equation*}
g(\z)\le \sum_{i=1}^m \mu_d(g_j)\|\z_j\|_d^d\le \big(\max_{j\in [m]}\mu_d(g_j)\big)\|\z\|_d^d.
\end{equation*}
Hence, 
\begin{equation}\label{charmudgred}
\mu_d(g)=\max_{j\in[m]}\mu_d(g_j).
\end{equation}
This proves Theorem \ref{poltcompmudg} in this case.
\subsection{Bounding the clique number of uniform hypergraphs}\label{subsec:unhgr}
Let $2^{[n]}_d$ denote all subsets of $[n]$ of cardinality $d\in[n]$.
Assume that $\cE\subset 2^{[n]}_d$.  A simple $d$-uniform hypergraph corresponding to the set of hyperedges $\cE$, denoted as $\cH=([n],\cE)$  can be described by a symmetric adjacency tensor $\cH(\cE)=[h_{i_1,\ldots,i_d}]\in \rS^d\R^{n}$, where
\begin{equation*}
h_{i_1,\ldots,i_d}=\begin{cases}
1 \textrm{ if } \{i_1,\ldots,i_d\}\in\cE,\\
0 \textrm{ otherwise}.
\end{cases}
\end{equation*}
More generally, equipping the hyperedge $\{i_1,\ldots,i_d\}$ with a weight
$f_{i_1,\ldots,i_d}$, we get a symmetric tensor
\begin{equation*}
\cF(\cE)=[f_{i_1,\ldots,i_d}]\in \rS^d\R^{n},  \quad f_{i_1,\ldots,i_d}>0\iff \{i_1,\ldots,i_d\}\in\cE.
\end{equation*}


We call $\rho(\cF(\cE))$ the spectral radius of the weighted hypergraph.
Theorem \ref{poltcompmudg} shows that $\rho(\cF(\cE))=\mu_d(\cF(\cE)\times\otimes^d \z)$ can be approximated in polynomial time. 

We now use this result to provide a bound for the clique number of a hypergraph
that can be obtained in polynomial time.
Recall that a {\em clique} $\cC$ in $\cE$ is subset of $[n]$ such that $2^{\cC}_d\subset \cE$.
Denote by $\omega(\cE)$ the maximum cardinality of a clique in $\cE$.
\begin{proposition}\label{ubcliqueghr} Let $\cE\subset 2^{[n]}_d$.  Denote by $\cH(\cE)\in\rS^d\R^{n}$ the symmetric tensor induced by $\cE$.  Then,
\begin{equation}\label{ubcliqueghr1} 
\prod_{i=1}^{d-1} (\omega(\cE)-i)\le \rho(\cH(\cE)).
\end{equation}
This inequality is sharp if $\cE$ is a union of disjoint cliques in $2^{[n]}_d$.
\end{proposition}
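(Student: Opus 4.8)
The plan is to exhibit a test vector supported on a maximum clique and use the max-min (Collatz–Wielandt) lower bound for $\rho(\cH(\cE))$. Let $\cC\subset[n+1]$ be a clique with $|\cC|=\omega:=\omega(\cE)$, so that every $d$-subset of $\cC$ is a hyperedge of $\cE$. Take the vector $\z=(z_1,\ldots,z_{n+1})^\top$ with $z_i=1$ for $i\in\cC$ and $z_i=0$ otherwise; note $\z\gneq\0$. First I would compute, for each $i\in\cC$, the $i$-th coordinate of $\frac1d\nabla g(\z)$ where $g(\z)=\langle\cH(\cE),\z^{\otimes d}\rangle$. Writing $G_i(\z)=\sum_{i_2,\ldots,i_d} h_{i,i_2,\ldots,i_d}z_{i_2}\cdots z_{i_d}$ as in \eqref{nablagform}, for $i\in\cC$ the nonzero contributions come exactly from tuples $(i_2,\ldots,i_d)$ of distinct elements of $\cC\setminus\{i\}$, each such ordered tuple of distinct indices contributing $1$ and there being $(\omega-1)(\omega-2)\cdots(\omega-(d-1))=\prod_{k=1}^{d-1}(\omega-k)$ of them (accounting for the partial-symmetrization; here one must match the normalization convention so that the $0/1$ symmetric tensor $\cH(\cE)$ produces $G_i(\z)$ counting ordered tuples). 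Hence $G_i(\z)=\prod_{k=1}^{d-1}(\omega-k)$ for every $i\in\cC$, i.e. $G_i(\z)=\big(\prod_{k=1}^{d-1}(\omega-k)\big)z_i^{d-1}$ on the support of $\z$.

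Next I would invoke the max-min characterization \eqref{minmaxchar} (or rather its extension to weakly irreducible nonnegative maps; if $\cH(\cE)$ is not weakly irreducible one passes to the connected component containing $\cC$, exactly as in the proof of Theorem~\ref{poltcompmudg}, and uses $\rho(\cH(\cE))=\max_j\rho(\cH(\cE_j))$). The max-min formula gives
\begin{equation*}
\rho(\cH(\cE))\ \ge\ \min_{i\in[n+1],\,z_i>0}\frac{G_i(\z)}{z_i^{d-1}}\ =\ \prod_{k=1}^{d-1}(\omega-k),
\end{equation*}
which is precisely \eqref{ubcliqueghr1}. For the sharpness claim, suppose $\cE$ is a disjoint union of cliques $\cC_1,\ldots,\cC_r$ with $|\cC_\ell|=\omega_\ell$ and $\omega=\max_\ell\omega_\ell$. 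Then $g$ decomposes as a separable sum $g(\z)=\sum_\ell g_\ell(\z_{\cC_\ell})$ over disjoint blocks of variables, so by the argument in the proof of Theorem~\ref{poltcompmudg} one has $\rho(\cH(\cE))=\max_\ell\rho(\cH(2^{\cC_\ell}_d))$, and for a single complete $d$-uniform hypergraph on $\omega_\ell$ vertices the all-ones vector on those vertices is an exact positive eigenvector with eigenvalue $\prod_{k=1}^{d-1}(\omega_\ell-k)$ by the computation above; taking the maximum over $\ell$ gives equality in \eqref{ubcliqueghr1}.

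The main obstacle I anticipate is purely bookkeeping: getting the combinatorial constant right under whatever symmetrization/normalization convention is in force for $\cH(\cE)\in\rS^d\R^{n+1}$ and for the associated map $\bF=\frac1d\nabla g$ in \eqref{nablagform}. Concretely one must verify that the $0/1$ entries $h_{i_1,\ldots,i_d}=\mathbf 1[\{i_1,\ldots,i_d\}\in\cE]$ (which are $1$ for \emph{every} ordering of a hyperedge) feed into $G_i(\z)$ so as to count ordered $(d-1)$-tuples of distinct vertices, yielding the falling factorial $\prod_{k=1}^{d-1}(\omega-k)$ rather than $\binom{\omega-1}{d-1}$ or $(\omega-1)!/\cdots$. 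Once that constant is pinned down, the rest is a one-line application of the Collatz–Wielandt inequality \eqref{infcharrho}–\eqref{minmaxchar}, together with the separable-decomposition trick already used for Theorem~\ref{poltcompmudg}; computability of the bound is immediate since $\omega(\cE)$ enters only through an explicit integer formula and the right-hand side $\rho(\cH(\cE))$ is $\varepsilon$-approximable in polynomial time by Theorem~\ref{poltcompmudg}.
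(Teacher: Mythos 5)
Your proof is correct, but it takes a genuinely different route from the paper's. For the lower bound, the paper never touches the gradient map or Collatz--Wielandt: it simply plugs the $\ell_d$-normalized indicator of a maximum clique, $z_i=|\cC|^{-1/d}$ on $\cC$, into the form $g(\z)=\langle\cH(\cE),\z^{\otimes d}\rangle$ and uses the definition $\rho(\cH(\cE))=\mu_d(g)=\max_{\|\z\|_d\le1}g(\z)$, getting $g(\z)=d!\binom{\omega}{d}/\omega=\prod_{i=1}^{d-1}(\omega-i)$ in one line, with no irreducibility hypothesis to manage. You instead run the max--min characterization \eqref{minmaxchar} for $\bF=\frac1d\nabla g$ on the unnormalized indicator vector; your count $G_i(\z)=\prod_{k=1}^{d-1}(\omega-k)$ (ordered tuples, falling factorial) is indeed the right constant under the paper's $0/1$ symmetric-tensor convention, but this route additionally needs the identification $\mu_d(g)=\rho(\bG)$ and the reduction to a weakly irreducible component, both borrowed from the proof of Theorem~\ref{poltcompmudg} --- heavier machinery for the same conclusion, though all of it is available in the paper. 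For sharpness the divergence is larger: the paper reduces to the complete $d$-uniform hypergraph and proves the matching upper bound analytically, via Maclaurin's and H\"older's inequalities applied to $\sigma_d(\z)$ on the $\ell_d$-ball, whereas you observe that the all-ones vector is an exact positive eigenvector of the complete hypergraph's gradient map and conclude $\rho=\prod_{k=1}^{d-1}(\omega_\ell-k)$ from the Perron-type results (uniqueness of the positive eigenvector, or the min--max side of \eqref{infcharrho}--\eqref{minmaxchar}). Your eigenvector argument is cleaner in that it avoids the classical inequalities, at the cost of again invoking the weak-irreducibility framework; you should also record, as the paper implicitly does, that in the disjoint-clique case one may assume each $|\cC_\ell|\ge d$ (smaller cliques contribute no hyperedges) so that $\omega(\cE)=\max_\ell|\cC_\ell|$ and the falling factorial is maximized at the largest clique.
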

\begin{proof} Set $g(\z)=\cH(\cE)\times \otimes^d\z$.  Then $\rho(\cH(\cE))=\mu_d(g)$.  Assume that $\cC\subset [n]$ is a clique in $\cE$.  Let 
\begin{equation*}
\z=(z_1,\ldots,z_{n})^\top=\begin{cases}
z_i=\frac{1}{|\cC|^{1/d}} \textrm{ if }i\in\cC,\\
z_i=0 \quad\quad\quad \textrm{ if } i\not\in \cC.
\end{cases}
\end{equation*}
Then 
\begin{equation*}
g(\z)=\frac{d!{|\cC|\choose d}}{|\cC|}=\prod_{i=1}^{d-1} (|\cC|-i)\le \mu_d(g)=\rho(\cH(\cE)).
\end{equation*}
Choose $\cC$ to satisfy $|\cC|=\omega(\cE)$ to deduce \eqref{ubcliqueghr1}.

Assume that $\cE=\cup_{j=1}^k 2^{\cC_j}_d$, where $\cup_{j=1}^k \cC_j\subset [n]$, and $\cC_i\cap\cC_j=\emptyset$ for $i\ne j$.   Then $g=\sum_{j=1}^k g_j(\z_j)$ is a decomposition of $g$ to its irreducible components.  The proof of Theorem \ref{poltcompmudg} shows that it is enough to show the sharpness of \eqref{ubcliqueghr1} when $\cE=2^{\cC}_d$.   Furthermore we can assume that $\cC=[n]$.  Then 
\begin{equation*}
\frac{g(\x)}{d!}=\sigma_d(\z)=:\sum_{1\le i_1<\cdots<i_d\le n} z_{i_1}\cdots z_{i_d}.
\end{equation*}
Use Maclaurin's and H\"older's inequalities to deduce
\begin{equation*}
\sigma_d(\z)\le {n \choose d}\big(\|\z\|_1/(n)\big)^d\le  {n \choose d}(n)^{-1}\|\z\|_d^d.
\end{equation*}
Assume that $\|\z\|_d=1$ to deduce the equality for $\cE=2^{[n]}_d$.
\end{proof}
We remark that for $d=2$, the inequality $\omega(G)\le 1+\rho(G)$ is follows from a stronger inequality \cite{Wil67}
\begin{equation*}
\chi(G)\le 1+\rho(G),
\end{equation*}
where $\chi(G)$ is the chromatic number of $G$, as $\chi(G)\ge \omega(G)$.
For $d\ge 3$ the above inequality is closely related to the inequalities in \cite{RBP09,XQ15}.

We now recall some results in \cite{RBP09}.  Let $G=([n],E)$ be a simple graph.  Assume that $\omega(G)\ge d$ for  $2<d\in[n]$.  Denote by 
$E_d\subset 2^{[n]}_d$ the set of all $d$ - vertices $\{i_1,\ldots,i_d\}$ that form a clique in $G$.   Clearly, $\omega(E_d)=\omega(G)$.  Apply the inequality \eqref{ubcliqueghr1}  for $\cE=E_d$.   Use this inequality to find an upper bound on $\omega(G)$.  
It is shown in  \cite{RBP09} that in many cases this inequality is better than $\omega(G)\le 1+\rho(G)$.  More precisely,  it is conjectured in \cite{RBP09} that  the upper for $\omega(G)$ using \eqref{ubcliqueghr1} is decreasing with $d$.
\subsection{Dihypergraphs}\label{subsec:dihyper}
In this subsection we consider the following types of weighted dihypergraph with the set of vertices $[n]$.  Denote by $\Phi_{d-1}^{[n]}$  the set all multi-subsets of cardinality $d-1$  of $[n]$.  Each multiset $\phi=\{\phi_1,\ldots,\phi_{d-1}\}\in [n]^{d-1}$, is associated with a unique monomial of degree $d-1$ in $n$ variables: $\x^{\ba}=\prod_{i=1}^{d-1} x_{\phi_i}$.    For each $j\in[n]$, let $\Phi_j\subseteq  \Phi_{d-1}^{[n]}$.   Then a dihypergraph $\overrightarrow{\cH}=([n], \cup_{j=1}^{n}\Phi_j)$ has dihyperedge $\overrightarrow{j\phi}$ for all $\phi\in\Phi_j, j\in[n]$.
 That is,  the tensor $\cH=[h_{i_1,\ldots,i_d}]\in  \R_{ps,+}^{(n)^{\times d}}$ represents $\overrightarrow{\cH}$ if and only if 
 \begin{equation*}
h_{i_1,\ldots,i_d}=\begin{cases}
1 \textrm{ if } \{i_2,\ldots,i_d\}\in\Phi_{i_1},\\
0 \textrm{ otherwise}.
\end{cases}
\end{equation*}

Similarly, a nonnegative weighted dihypergraph is represented by unique $\cT\in \R_{ps,+}^{(n)^{\times d}}$.   
Let $\bF:\R^{n}\to\R^{n}$ be the homogeneous maps induced by $\cT$.  
Then $\rho(\bF)$ is $\rho(\overrightarrow{\cH})$-the spectral radius of $\overrightarrow{\cH}$.
The dihypergraph $\overrightarrow{\cH}$ is weakly irreducible if $\bF$ is weakly irreducible.  Our results show that if $\overrightarrow{\cH}$ is weakly irreducible then
it spectral radius is polynomial time approximable.

\subsection{Extension to posynomial maps}
\label{subsec:quashommapspol}
Posynomials are similar to polynomials with nonnegative coefficients but the exponents of posynomials are allowed to take real values
(or rational values when complexity issues are considered).

We now explain how~\Cref{Fcoercthmnew} and \Cref{sirFthm} carry
over to the case of {\em posynomial} maps.
We will show in addition that for a rational $p\ge d+1$, and a posynomial
form of degree $d\ge 2$, one can approximate $\mu_p(g)$ in polynomial time.

More precisely, for $\z\in\Rpp^{n}$ and $\ba\in\Rpp^{n}$ let $\z^{\ba}=z_1^{a_1}\cdots z_{n}^{a_{n}}$.  We assume $0^0=1$.  For a real $d\ge 1$ denote  
\begin{equation}\label{defNdn}
\begin{aligned}
&\cN_{d,n}=\{\ba\in \R^{n}_+, \|\ba\|_1=d\}, d>0,\\
&\cN'_{d,n}=\{\ba\in \R^{n}_+, a_i\in\{0\}\cup[1,d], i\in[n],\|\ba\|_1=d\}, d\in(1,\infty).\\
\end{aligned}
\end{equation}
In what follows we discuss a nonnegative homogeneous  posynomial $g_{ \cA}$ on $\R_+^{n}$ of degree $d>1$  given by
$g_{\cA} =\sum_{\ba\in \cA} g_{\ba} \z^{\ba}$, where $g_{\ba}> 0$, and   $\cA$ is a finite subset of $\cN_{d,n}$.  Observe that $g_{\cA}$ is continuous on $\R^{n}$, and $\nabla g_{\cA}$ is continuous on $\R_+^{n}$ if $\cA\subset \cN'_{d,n}$ (the set $\cN'_{d,n}$ excludes
exponents in $(0,1)$ which would make the gradient singular at the origin).
Note that for a given $\cA\subset \cN_{d,n}$ there exists a positive integer $k$ such that $k\cA\subset \cN'_{d,n}$.

A homogeneous posynomial map $\bF:\R^{n}\to \R^{n}$ of degree $d-1> 0$ is given by
\begin{equation}\label{defqhF}
F_i(\z)=\sum_{\bb\in \cB_i} f_{\bb,i} \z^{\bb}, \quad \cB_i \subset \cN_{d-1,n}, i\in[n].
\end{equation}

Assume first that $\cA$ and each $\cB_i$ is a subset of $\cN'_{d-1,n}$.
It is straightforward to show that Theorems  \ref{Fcoercthmnew} and  \ref{sirFthm} apply to homogeneous posynomials.

We now consider the case where $\cA$ and each $\cB_i$ is a subset of $\cN_{d-1,n}$.   For $k\in\N$ denote $g_k(\x)=g(\z^{\circ k}), \bF_k=\bF(\z^{\circ k})$ respectively.  Then $g$ is irreducible iff $g_k$ is irreducible, and $\bF$ is weakly irreducible (strongly irreducible) iff  $\bF_k$ is weakly irreducible (strongly irreducible). 
Furthermore,  $\mu_{p}(g)=\mu_{kp}(g_k)$, and 
$\rho(\bF)=\rho(\bF_k)$.  Furthermore, if $\bF_k$ is weakly irreducible then 
\begin{equation*}
\bF(\x)=\rho(\bF)\x^{\circ (d-1)}\iff \bF_k(\x)=\rho(\bF_k)\x^{\circ k(d-1)}.
\end{equation*}
Hence, without loss of generality we can assume that $\cA$ and $\cB_i$ are subsets of $\cN'_{d,n}$.

\subsection{The polynomial computability of $\mu_p(g)$ for a rational $p> d$}\label{subsec:pcmupg}
Assume that $g(\z)$ a nonzero nonnegative  posynomial of rational degree $d\ge 2$, which is a function of a subset of $z_1,\ldots,z_{n}$ variables.   Set $\tilde\z=(\z,z_{n+1})\in\R^{n+1}$.  Define 
\begin{equation*} 
\tilde g(\tilde\z):=z_{n+1}^{p-d}g(\z), \quad \z,z_{n+1}\ge 0.
\end{equation*}
Clearly, $\tilde g(\tilde \z)$ is homogeneous posynomial of degree  $p$.  Assume first that $p\ge d+1$.  Set $\bF=\frac{1}{p}\nabla \tilde g$.  Observe that in the simple graph $\overset{\rightarrow} G(\bF)$, all vertices are connected to the vertex $n+1$.  Hence $\tilde g$ is weakly irreducible.
Theorem \eqref{poltcompmudg} for $\tilde g$ yields that we can approximate the value of $\mu_p(\tilde g)$ in polynomial time.
It is left to show that this implies that we can find an approximation of $\mu_p(g)$ in polynomial time.   Observe that $\mu_p(\tilde g)$ achieved for $\tilde \z^\star$ where 
$z_{n+1}^\star\in (0,1)$.  Fix $t\in (0,1)$ and consider the maximum of $\tilde g(\tilde \z)$ for $\|\z\|_p^p=(1-t^p)$.   This maximum is $\mu_p(g)t^{p-d}(1-t^p)^{d/p}$.  Set
\begin{equation*}
\alpha_{d,p}=\max_{t\in[0,1]}t^{p-d}(1-t^p)^{d/p}.
\end{equation*} 
This maximum is achieved at:
\begin{equation*}
t^{\star}_{d,p}=\big( 1-\frac{d}{p}\big)^{1/p}.
\end{equation*}
Thus $\mu_p(\tilde g)=\alpha_{d,p} \mu_p(g)$.
Thus we can approximate $\mu_p(g)$. In the case $p\in(d, d+1)$ consider $g^k$, where $k$ is a posiitve integer satisfying $kp\ge kd+1$. Apply now the previous result.

\subsection{Polynomial time computability of some $\|A\|_{p,q}$ norms for nonnegative matrices}\label{sec:pqmnorms}
Let 
\begin{equation*}
\|A\|_{p,q}=\max_{\|\x\|_p\le 1}\|A\x\|_q=\max_{\|\x\|_p\le 1}\left(\sum_{i=1}^m |(A\x)_i|^q\right)^{1/q}, \quad A\in\R^{m\times n},
\end{equation*}
be a mixed $p-q$-matrix norm.
It is known that $\|A\|_{p,q}$ is NP-Hard to compute for $1\le q< p\le \infty$ \cite{Ste05} and for $p=q\ne 1,2,\infty$ \cite{HO10}.  It is conjectured in \cite{Ste05} that there are only three cases in which mixed norms are
computable in polynomial time: First, p = 1, and q is any rational number larger
than or equal to 1.  Second, $q = \infty$  and p is any rational number larger than or equal to 1. Third,  $p = q = 2$.
\begin{theorem}\label{poscomp}  Let $A\in\Q^{m\times n}_{+}$,    Assume that $q\in\N, q\le p\in\Q$.  Then
the mixed norm $\|A\|_{p,q}$ is polynomial time computable.
\end{theorem}
\begin{proof} As $A$ is nonnegative we deduce
\begin{equation*}
\|A\|_{p,q}^q=\max_{\|\x\|_p\le 1} \sum_{i=1}^m |(A\x)_i|^q= \max_{\|\x\|_p\le 1, \x\in\R^n_+} \sum_{i=1}^m (A\x)_i^q.
\end{equation*}
Clearly,  $g(\x)=\sum_{i=1}^m (A\x)_i^{q}$ is a homogeneous polynomial of degree $q$ with nonnegative coefficients.   Assume first that $q=1$.
Then 
\begin{equation*}
g(\x)=\sum_{j=1}^m \left(\sum_{i=1}^n a_{ij}\right)x_j\Rightarrow   \|A\|_{p,1}=\left(\sum_{j=1}^m (\sum_{i=1}^n a_{ij})^r\right)^{1/r}, r=\frac{p}{p-1}.
\end{equation*}
Hence, $\|A\|_{p,1}$ is computable for $1\le p\in \Q$.

Assume that $2\le q\in\N$.
As $q\le p$, the result of  subsection \ref{subsec:pcmupg} yields that the maximum of $g(\x)$ over the unit ball in $\|\cdot\|_p$ is polynomial time computable.
\end{proof}

 \section*{Acknowledgment}
 The work of the first author is partially supported by the Simons Collaboration Grant for Mathematicians.   The work of the second author was partially supported by ANR and by FACCTS.

\bibliographystyle{plain}

\begin{thebibliography}{MMM}
\bibitem{AGGG} M. Akian, S. Gaubert, J.  Grand-Cl\'ement, and J.  Guillaud, The operator approach to
  entropy games,\emph{Theor. Comp. Sys.} 63(5), 1089-1130, July 2019.

\bibitem{akianmfcs}
M. Akian, S. Gaubert, U. Naepels, and B. Terver.
 Solving irreducible stochastic mean-payoff games and entropy games by
  relative Krasnoselskii-mann iteration, Proceedings of MFCS, 2023.
  
\bibitem{AB17}
V.~Anantharam and V.~S. Borkar.
\newblock A variational formula for risk-sensitive reward.
\newblock {\em SIAM J. Control Optim.}, 55(2):961--988, 2017.
\bibitem{Ban38} S.~Banach, \"Uber homogene Polynome in ($L^2$), \emph{Studia Math.}, \textbf{7} (1938), pp.~36--44.
\bibitem{borwein}
J.~M. Borwein and P.~B. Borwein.
\newblock On the complexity of familiar functions and numbers.
\newblock {\em SIAM Review}, 30(4):589--601, 1988.
\bibitem{brazitikos}
S.~Brazitikos, A.~Giannopoulos, P.~Valettas, and B.-H. Vritsiou, {\em Geometry
  of Isotropic Convex Bodies}.
\newblock AMS, 2014.
\bibitem{BDMWW24}  P. B\"urgisser, M. L.  Do\u{g}an, V. Makam, M. Walter, A. Wigderson,
Complexity of robust orbit problems for torus actions and the abc-conjecture,  arXiv:2405.15368, 2024.
\bibitem{BLNW20} P. B\"urgisser, Y. Li, H. Nieuwboer, M. Walter, Interior-point methods for unconstrained geometric programming and scaling problems,  	arXiv:2008.12110, 2020.
\bibitem{CaGaPo:18}
G.C. Calafiore, S. Gaubert, and C.  Possieri.
\newblock A universal approximation result for difference of log-sum-exp neural
  networks.
\newblock {\em IEEE Trans. Neural Networks Learn. Syst.}, 31(12):5603--5612,
2020.
\bibitem{asarin}
E.~Asarin, J.~Cervelle, A.~Degorre, C.~Dima, F.~Horn, and V.~Kozyakin.
\newblock Entropy games and matrix multiplication games.
\newblock In {\em 33rd Symposium on Theoretical Aspects of Computer Science,
  {STACS} 2016, February 17-20, 2016, Orl{\'{e}}ans, France}, pages
  11:1--11:14, 2016.

\bibitem{CS16}
V.~Chandrasekaran and P.~Shah.
\newblock Relative entropy relaxations for signomial optimization.
\newblock {\em SIAM J. Optim.}, 26(2):1147--1173, 2016.
\bibitem{CDN14} C.-F.  Cui,  Y.-H. Dai,  and J. Nie,  All real eigenvalues of symmetric tensors, \emph{SIAM J. Matrix Anal. Appl. } 35 (2014), no. 4, 1582-1601.
\bibitem{deklerk_vallentin}
E. de~Klerk and F. Vallentin, On the {T}uring model complexity of interior
  point methods for semidefinite programming, {\it SIAM J. Optim. } 26~(3),
 2016,  1944--1961.
\bibitem{ekelandtemam}  I. Ekeland and R. T\'emam, {\it Convex Analysis and Variational Problems}, SIAM, 1999.
\bibitem{Dressler2017}
M. Dressler, S. Iliman, and T. de~Wolff.
\newblock A positivstellensatz for sums of nonnegative circuit polynomials.
\newblock {\em SIAM Journal on Applied Algebra and Geometry}, 1(1):536–555,
  January 2017.
  
  \bibitem{DresslerNaumannTheobald+2021+227+236}
M. Dressler, H. Naumann, and Th. Theobald.
\newblock The dual cone of sums of non-negative circuit polynomials.
\newblock {\em Advances in Geometry}, 21(2):227--236, 2021.

\bibitem{Etessamietal}
K. Etessami, A. Stewart, and M. Yannakakis.  
\newblock A polynomial time algorithm for computing extinction probabilities of
  multitype branching processes.
\newblock {\em SIAM Journal on Computing}, 46(5):1515--1553, 2017.

\bibitem{Frib} S. Friedland, \emph{Matrices: Algebra, Analysis and Applications}, World Scientific, 596 pp., 2016, Singapore, http://www2.math.uic.edu/$\sim$friedlan/bookm.pdf
\bibitem{Fri20} S. Friedland,  Tensor optimal transport, distance between sets of measures and tensor scaling, arXiv:2005.00945, 2020.
\bibitem{Fri23} S. Friedland, Interior point method in tensor optimal transport,  arXiv:2310.02510, 2023. 
\bibitem{FECZ22}  S. Friedland, M.\,Eckstein, S. Cole, K.l {\.Z}yczkowski, Quantum Monge-Kantorovich problem and transport distance between density matrices,  Physical Review Letters 129,  Issue 11, 110402 - Published 7 September 2022.
\bibitem{FG20} S. Friedland and S. Gaubert,
  Spectral Inequalities for Nonnegative Tensors and Their Tropical Analogues,
  Vietnam J. Math. 48 (2020), no. 4, 893-928.
\bibitem{FGH13}  S. Friedland,  S. Gaubert and L. Han, Perron-Frobenius theorem for nonnegative multilinear
 forms, \emph{Linear Algebra Appl.}, 438 (2013), 738--749.
\bibitem{FW20} S. Friedland and L.Wang, Spectral norm of a symmetric tensor and its computation,  \emph{Mathematics of Computation}, 89 (2020),  2175--2215.
\bibitem{GG04}  S. Gaubert, J. Gunawardena, The Perron–Frobenius theorem for homogeneous, monotone functions, {\it Trans. Amer. Math. Soc.}
356 (2004) 4931-4950.
\bibitem{GLS81}  M. Gr{\"o}tschel,  L. Lov\'asz and A. Schrijver,  The ellipsoid method and its consequences in combinatorial optimization, \emph{Combinatorica} 1 (1981), no. 2, 169-197.
\bibitem{GLS88}      M. Gr{\"o}tschel,  L. Lov\'asz and A. Schrijver,
  \emph{Geometric algorithms in combinatorial optimization},
  Springer, 1988.
 \bibitem{HO10} J.M.  Hendrickx and A. Olshevsky, Matrix $p$-norms are NP-Hard to approximate if $p\ne 1,2,\infty$,  {\it SIAM J. Matrix Anal. Appl. } 31, (5), (2010),2802-2812. 
\bibitem{HL13} C.J. Hillar and L.-H. Lim, Most tensor problems are NP-hard,  
\emph{Journal of the ACM,} 60 (2013), no. 6, Art. 45, 39 pp.
\bibitem{JWLL19} Y. Ji, Q. Wang, X. Li and J. Liu, A Survey on Tensor Techniques and Applications in Machine Learning,  IEEE, vol. 7, pp. 162950-162990, 2019.
\bibitem{Lan12} J.M. Landsberg,  \emph{Tensors: geometry and applications}, Graduate Studies in Mathematics, 128, American Mathematical Society, Providence, RI, 2012. xx+439 pp.
\bibitem{Kin61} J. F. C. Kingman, A convexity property of positive matrices, \emph{Quart. J. Math. Oxford} 12 (1961),283-284.
\bibitem{LV20} J. Leake and N.K. Vishnoi,  On the computability of continuous maximum entropy distributions with applications,  STOC 2020: Proceedings of the 52nd Annual ACM SIGACT Symposium on Theory of Computing, June 2020, pp.  930-943.
\bibitem{Lim05} L.-H. Lim, Singular values and eigenvalues of tensors: a variational approach, Proceedings of the IEEE International Workshop on Computational Advances in Multi-Sensor Adaptive Processing (CAMSAP '05), 1 (2005), pp. 129-132. 
\bibitem{Lim21} L.-H. Lim, Tensors in computations, \emph{Acta Numerica}, 30 (2021), pp. 555-764.
\bibitem{Liu22} Y.  Liu, \emph{Tensors for Data Processing Theory,} Methods, and Applications,  Academic Press, 2022.
\bibitem{LS47}L.  Lyusternik and L.  Schnirelman, Topological methods in variational problems and their application to the differential geometry of surfaces, (Russian), Uspehi Matem. Nauk (N.S.) 2, (1947). no. 1(17), 166-217.
\bibitem{MS65} T.~S.~Motzkin and E.~G.~Straus, Maxima for graphs and a new proof of a theorem of T\'uran,\textit{Canadian J.\ Math.}, \textbf{17} (1965), pp.~533--540.
\bibitem{NN94} Y. Nesterov and A. Nemirovskii,  {\it Interior-point polynomial algorithms in convex programming}, SIAM Studies in Applied Mathematics, 13, Philadelphia, PA, 1994. 
\bibitem{NQZ09} M. Ng,  L.Qi,  and G. Zhou, Finding the largest eigenvalue of a nonnegative tensor, \emph{SIAM J. Matrix Anal. Appl. } 31 (2009), no. 3, 1090-1099.
\bibitem{nuss86}
R.D. Nussbaum.
\newblock Convexity and log convexity for the spectral radius.
\newblock {\em Linear Algebra and its Applications}, 73:59--122, 1986.
\bibitem{OGA07} L. Omberg, G.H. Golub, and O. Alter,
A tensor higher-order singular value decomposition for integrative analysis of DNA microarray data from different studies, \emph{Proc. Natl. Acad. Sci. USA}
104 (47) 18371-8376.
\bibitem{Qi05} L. Qi, Eigenvalues of a real supersymmetric tensor, \emph{J. Symb. Comput.}, 40 (2005) 1302-1324.
\bibitem{Ren01}  J. Renegar,  {\it A mathematical view of interior-point methods in convex optimization},MPS/SIAM Series on Optimization, SIAM\&MPS, Philadelphia, PA, 2001. viii+117 pp.
  \bibitem{rockafellar}
R.~T. Rockafellar.
\newblock {\em Convex Analysis}.
\newblock Princeton University Press, New Jersey, 1970.

\bibitem{RBP09} S.  Rota Bul\'o and M. Pelillo,  New Bounds on the Clique Number of Graphs Based on Spectral Hypergraph Theory,   In: St\"utzle, T. (eds) Learning and Intelligent Optimization,  \emph{Lecture Notes in Computer Science}, vol 5851., Springer 2009.
\bibitem{schonage} A. Sch\"onage, The Fundamental Theorem of Algebra
in Terms of Computational Complexity, 1982, report available from {\tt citeseerx}.  
\bibitem{SV14} M. Singh and N.K. Vishnoi, Entropy, optimization and counting, {\it Proceedings of
the Forty-Sixth Annual ACM Symposium on Theory of Computing} (New York, NY, USA),
STOC 2014, Association for Computing Machinery, 2014, pp. 50-59.
\bibitem{Sch86} A. Schrijver, Theory of Linear and Integer Programming, Wiley, New York, 1986.
\bibitem{SS82} V. S\'os and E.G. Straus,  Extremal of functions on graphs with applications to graphs and hypergraphs, \emph{J. Combin. Theory Series B} 63, 189-207 (1982).
\bibitem{Ste05} D. Steinberg, Computation of Matrix Norms with Applications to Robust Optimization, M.Sc.
thesis, Technion—Israel Institute of Technology, Haifa, Israel, 2005, also available online
from \url{http://www2.isye.gatech.edu/~nemirovs/Daureen.pdf}.
\bibitem{SV19} D. Straszak and N.K. Vishnoi, Maximum entropy distributions: Bit complexity and stability, {\it Proceedings of the Thirty-Second Conference on Learning Theory} (Phoenix,
  USA) (Alina Beygelzimer and Daniel Hsu, eds.), PMLR, vol. 99, 2019, pp. 2861-2891.
  \bibitem{Theobald2023}
Thorsten Theobald.
\newblock {\em Relative Entropy Methods in Constrained Polynomial and Signomial
  Optimization}, pages 23--51.
\newblock Springer Nature Switzerland, Cham, 2023.
\newblock \href {https://doi.org/10.1007/978-3-031-38659-6_2}
  {\path{doi:10.1007/978-3-031-38659-6_2}}.
 \bibitem{Wil67}  H. S.  Wilf, The eigenvalues of a graph and its chromatic number, \emph{J. London Math. Soc. }42 (1967), 330-332.
 \bibitem{XQ15} J.Xie and L.Qi, The clique and coclique numbers’ bounds based on the H-eigenvalues of uniform hypergraphs, \emph{Int. J. Numer. Anal. Model. }12 (2015) 318-327.
\end{thebibliography}

\end{document}